\newtheorem{theo}{Theorem}[section]
\newtheorem{prop}[theo]{Proposition}
\newtheorem{claim}[theo]{Claim}
\newtheorem{lemm}[theo]{Lemma}
\newtheorem{sublemm}[theo]{Sublemma}
\newtheorem{coro}[theo]{Corollary}
\newtheorem{rema}[theo]{Remark}
\newtheorem{Defi}[theo]{Definition}
\newtheorem{example}[theo]{Example}
\newtheorem{question}[theo]{Question}
\title{On   fibrations and measures of irrationality  of hyper-K\"{a}hler manifolds}
\author{Claire Voisin\footnote{The author is supported by the ERC Synergy Grant HyperK (Grant agreement No. 854361).}}
\date{}
\newfont{\gothic}{eufb10}
\begin{document}
\maketitle
\begin{abstract} We  prove some  results on the fibers and images of rational maps from   a hyper-K\"{a}hler manifold. We study in particular the minimal genus of fibers of  a fibration into curves. The last section of this paper is devoted to the study of the rational map defined by a   linear system on a hyper-K\"{a}hler fourfold satisfying numerical conditions similar to those considered by O'Grady in his study of  fourfolds numerically equivalent to $K3^{[2]}$. We  extend his results to this more general context.
 \end{abstract}
\section{Introduction}
The paper \cite{einlazMIR} introduced and discussed two  numerical birational invariants of a  projective variety $X$, the covering gonality ${\rm covgon}(X)$ and the degree of irrationality ${\rm irr}(X)$. The former is defined as
the minimal gonality of a  curve $C$, which is the general fiber of a family
$$\psi :\mathcal{C}\rightarrow B,\,\,\phi: \mathcal{C}\rightarrow X$$ of curves covering $X$, that is, $\phi $ is dominant and nonconstant on the fibers of $\psi$. The second number is defined as the minimal degree of a dominant rational map $X\dashrightarrow \mathbb{P}^n$, $n={\rm dim}\,X$.
Obviously, one has ${\rm irr}(X)\geq {\rm covgon}(X)$ but the inequality is strict in many cases. For example, the covering gonality of a uniruled manifold is $1$, while its irrationality is $1$ only if it is rational. One can similarly  introduce the ``covering genus'' ${\rm covgen}(X)$, namely the genus  of a  curve $C$, which is the general fiber of a family
$$\psi :\mathcal{C}\rightarrow B,\,\,\phi: \mathcal{C}\rightarrow X$$ of curves covering $X$.

There are several   similarly defined  numbers that can be studied, namely the ``fibering  gonality" ${\rm fibgon}(X) $ and the ``fibering genus''
${\rm fibgen}(X)$
defined as follows
\begin{Defi}\label{defifibgon} The fibering gonality of $X$ is the minimal gonality of the general fiber of a fibration $X\dashrightarrow B$ into curves.
 The fibering genus of $X$ is the minimal genus of the general fiber of a fibration $X\dashrightarrow B$ into curves.
\end{Defi}
Here, the  ``general fiber" of a rational map $\phi: X\dashrightarrow B$ is defined as  the general fiber of a resolution of singularities
$\tilde{\phi}:\widetilde{X}\rightarrow B$. Instead of studying coverings of $X$ by varieties of a given type, we thus  study fibrations, namely dominant rational map $X\dashrightarrow B$ with connected fibers and ${\rm dim}\,B< {\rm dim}\,X$, with fibers of a given type. There are   obvious inequalities
\begin{eqnarray} \label{ineqintrotrivial} {\rm covgon}(X)\leq {\rm fibgon}(X),\,\,{\rm covgen}(X)\leq {\rm fibgen}(X).
\end{eqnarray}
Another  simple  comparison  between the fibering genus  and the fibering gonality of  a projective variety $X$  introduced  in (\ref{defifibgon})  is
\begin{eqnarray} \label{eqineqpourintroBN}  {\rm fibgon}(X)\leq   \lceil{\frac{{\rm fibgen}(X)}{2}}\rceil+1.
\end{eqnarray}
which  follows indeed from the Brill-Noether theory showing the existence of $g_k^1$ on curves of genus $\leq 2k-2$.
Note that, in the case of a surface, the fibering genus is called the Konno invariant \cite{konno}. Ein and Lazarsfeld studied in {\it loc. cit.} a different  higher dimensional generalization of it in \cite{einlazkonno}, defined as the minimal geometric genus $p_g$ of a fiber of a   rational map to $\mathbb{P}^1$.

Ein and Lazarsfeld prove that the Konno invariant of a $K3$ surface with Picard group of rank  $1$ generated by a line bundle of self-intersection $h$ grows like $\sqrt{h}$. This is in strong contrast with the covering genus which is always equal to  $1$.
A beautiful  construction by Koll\'ar \cite{kollar} shows that a rationally connected smooth projective manifold, hence of covering gonality $1$ and covering genus $0$,  can be non-ruled, hence can have fiber gonality at least $2$ and fiber genus at least $1$, so both inequalities in (\ref{ineqintrotrivial}) are strict in general.

In the case of hyper-K\"{a}hler manifolds,    the following question  asked by Pacienza (oral communication) is still open.
\begin{question} \label{questionpacienza} Let $X$ be a hyper-K\"{a}hler manifold which is projective and very general in moduli. Is $X$ swept-out by elliptic curves? Equivalently, is ${\rm covgen}(X)=1$?
\end{question}
Here the assumptions on $X$ mean that $X$ is equiped with a given polarization (very ample line bundle) and, equipped with this polarization,  is very general in the corresponding moduli space of polarized hyper-K\"{a}hler manifolds. In particular, we have $\rho(X)=1$ by generalities on the period map. We expect that the answer to this question is no in some examples but were not able to prove or disprove it even on some explicit examples like the Fano variety of lines on a cubic fourfold, although we described in \cite{voisintriangle} some consequences of the existence of a covering by elliptic curves. Note that, if $\rho(X)=2$, the example of Hilbert schemes $S^{[n]}$ for any projective $K3$ surface shows that we may have many such coverings. Indeed, it is well-known that $S$ itself has many coverings by $1$-parameter families of elliptic curves $E_t$, and then $z\times E_t\subset S^{[n]}$ for any $0$-dimensional subscheme
$z\subset S$ of length $n-1$ not intersecting $E$ is an elliptic curve in $S^{[n]}$ and these elliptic curves  cover $S^{[n]}$.

In contrast, we will show  in Section \ref{secgenineq} that  Question \ref{questionpacienza} has an easy negative answer if the covering genus is replaced by the fibering genus:
\begin{prop} \label{theofibgen} Let $X$ be a hyper-K\"{a}hler manifold of dimension $2n$. Then if $n>1$, one has
\begin{eqnarray} \label{ineqfibgen} {\rm fibgen}(X)\geq 3\end{eqnarray}
\begin{eqnarray} \label{ineqfibgon} {\rm fibgon}(X)\geq 3\end{eqnarray}
\end{prop}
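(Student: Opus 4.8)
The plan is to prove both inequalities simultaneously, since by the Brill--Noether inequality \eqref{eqineqpourintroBN}, a fibering genus of $2$ would give fibering gonality $\leq \frac{2-1}{2}+2$, which rounds down to $2$, so both bounds are genuinely coupled and it suffices to rule out the existence of a fibration into curves whose general fiber has genus $\leq 2$ (for the genus bound) and, separately, to rule out gonality $\leq 2$, i.e.\ hyperelliptic or rational general fibers. Given a fibration $f\colon X\dashrightarrow B$ into curves, I would first resolve indeterminacies to get a genuine morphism $\tilde f\colon \tilde X\to \tilde B$ from a smooth birational model, with general fiber a smooth curve $C$ of the same genus and gonality. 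The key geometric input is the holomorphic symplectic form $\sigma$ on $X$: pulled back to $\tilde X$ it is a closed $2$-form, and I would study its restriction to the fibers and to the relative situation over $B$.

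The heart of the argument, as I see it, is a dimension/rank count on the symplectic form. For a fibration into curves on a $2n$-fold, the fibers are $(2n-1)$-dimensional, which is odd, so on a general fiber $F$ the restriction $\sigma|_F$ is a degenerate $2$-form (its rank is at most $2n-2$); its kernel foliation defines a sub-fibration of $F$ by curves, or more precisely the symplectic form induces extra structure comparing the base direction to the fiber. I would exploit the classical fact, going back to the Beauville--Bogomolov structure, that for a fibration $g\colon X\to B'$ from a hyper-K\"ahler manifold with $\dim B' = n$ onto a base (a Lagrangian fibration), the fibers are Lagrangian tori; but here the fibration is into curves, so $\dim B = 2n-1$ is large. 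The relevant point is to pull back a general pencil of curves from $B$ to reduce the fiber structure, or rather to intersect the curve fibers with the form. Concretely, I would consider the general curve fiber $C$ and a deformation of it inside $X$ parametrized by $B$; the normal bundle $N_{C/X}$ receives a map from $T_{B,b}$, and the symplectic form pairs $T_C$ with $N_{C/X}$. Since $\sigma$ is nondegenerate, it identifies $N_{C/X}^\vee$ with a quotient involving $T_C$, forcing $\sigma|_C$ to restrict nontrivially and bounding sections of $\omega_C$ from below.

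More precisely, the mechanism I expect to drive the genus bound is that a fibration into curves produces, via the symplectic form, a nonzero holomorphic form or a bound on $h^0(C,\omega_C)=g$: the two-form $\sigma$ restricted to the fiber direction vanishes (curves carry no $2$-forms), so $\sigma$ must pair the tangent to $C$ with the normal directions coming from the base $B$, and nondegeneracy of $\sigma$ together with $\dim B = 2n-1 > n$ forces the curve $C$ to move in a family whose induced variation of Hodge structure on $H^1(C)$ is nontrivial. Were $g(C)\leq 2$, the curve would be hyperelliptic (or rational/elliptic), hence its Jacobian and the period map would be too constrained to support the full rank of $\sigma$; I would derive a contradiction by showing the image of $T_B$ in $H^0(C,\omega_C)^\vee$ under the symplectic pairing cannot fit into a space of dimension $g\leq 2$. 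The main obstacle, which I would need to handle carefully, is exactly this last step: translating the nondegeneracy of $\sigma$ on the total space $X$ into a sharp lower bound on $g(C)$, as the pairing is between the fiber tangent and the full normal bundle, and one must control how much of the normal bundle genuinely comes from moving the fiber in the fibration (i.e.\ from $T_B$) versus deforming $X$ itself. I expect the clean way through is to use that $\sigma$ induces an isomorphism $T_X\cong \Omega_X$, restrict the exact sequence $0\to T_F\to T_X|_F\to N_{F/X}\to 0$ to a fiber $F=C\times(\text{rest})$, and extract from the symplectic self-duality a lower bound forcing $h^0(C,\omega_C)\geq 2$, hence $g\geq 2$; sharpening $g\geq 2$ to $g\geq 3$ (and ruling out the borderline hyperelliptic/genus-$2$ case) is where the real work lies, presumably by a separate argument ruling out hyperelliptic fibers, after which the gonality bound $\eqref{ineqfibgon}$ follows either in parallel or from $\eqref{eqineqpourintroBN}$ combined with excluding low-gonality fibers directly.
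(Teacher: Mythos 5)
There is a genuine gap: your proposal never produces an argument for the actual content of the proposition, namely the exclusion of fibers of gonality $2$. You reduce everything correctly to ruling out general fibers of genus $\leq 2$ and of gonality $\leq 2$, and your symplectic rank count is a reasonable route to $g\geq 2$ (the paper makes exactly this precise later, in Lemma \ref{lerangdusigmab} and Corollary \ref{corgn}, which give $g\geq n$ via the contraction map $T_{B,b}\to H^0(\widetilde{X}_b,\Omega_{\widetilde{X}_b})$), but you then write that ruling out the hyperelliptic/genus-$2$ case ``is where the real work lies, presumably by a separate argument'' --- and that separate argument is the whole proposition. The paper's mechanism is of a completely different nature from anything you sketch: if the general fiber is hyperelliptic of genus $\geq 2$, the relative hyperelliptic involution $\iota$ is canonical, and the quotient $Q=X/\iota$ is a $\mathbb{P}^1$-fibration over the base $B$, which has no holomorphic forms; hence $h^0(Q,\Omega_Q^2)=h^0(Q,\Omega_Q^4)=0$, forcing $\iota^*\sigma_X=-\sigma_X$, whence $\iota^*\sigma_X^2=\sigma_X^2$ descends to $Q$, a contradiction (Lemma \ref{proinvol}). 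Separately, genus $0$ and $1$ are excluded not by a rank count but by Matsushita's theorem: since $K_{\widetilde X}$ is supported on the exceptional divisor of the resolution and has degree $2g-2\leq 0$ on the fiber, the general fiber avoids the exceptional locus, so the fibration is almost holomorphic and its base would have dimension $\leq n<2n-1$. Neither ingredient appears in your proposal.

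A secondary but real confusion: you write that ``the fibers are $(2n-1)$-dimensional'' and discuss the kernel foliation of $\sigma|_F$; for a fibration \emph{into curves} the fibers are $1$-dimensional (it is the base that has dimension $2n-1$), and the restriction of any $2$-form to a curve is identically zero, so that paragraph does not apply as stated. The useful object is not $\sigma|_F$ but the contraction of $\sigma$ against base directions, i.e.\ the induced map $T_{B,b}\to H^0(C,\omega_C)$, which is what you gesture at afterwards; but even granting it, a lower bound on $g$ alone cannot reach $g\geq 3$ for $n=2$ (the Fano-variety-of-lines example in the introduction has $n=2$ and $g=4$, and the rank count only yields $g\geq n=2$), so the involution argument, or something replacing it, is indispensable.
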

The proofs are  elementary. The inequality (\ref{ineqfibgen}) is a consequence of the inequality ${\rm fibgen}(X)\geq 2$ and of (\ref{ineqfibgon}). The inequality ${\rm fibgen}(X)\geq 2$ can be given several proofs. One of them  generalizes to the
 case of fibrations by varieties birational to abelian varieties for which we prove the following result.
\begin{theo} \label{theoabfib} Let $X$ be a hyper-K\"{a}hler manifold of dimension $2n$. Then if $X$ admits a fibration $X\dashrightarrow B$ with general fiber birational to an abelian variety of dimension $g$, one has $g= n$, hence also ${\rm dim}\,B= n$, and  the fibration is Lagrangian.
\end{theo}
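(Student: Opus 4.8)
The plan is to extract, from the unique holomorphic two-form $\sigma$, enough numerical and Hodge-theoretic information about the fibration to squeeze $\dim B$ between $n$ and $n$. First I would resolve the indeterminacy: choose a smooth projective $\widetilde X$ with a birational morphism $\tau\colon\widetilde X\to X$ and a morphism $\widetilde f\colon\widetilde X\to B$ (with $B$ smooth projective) factoring the rational map, and set $\widetilde\sigma=\tau^*\sigma$. By generic smoothness a general fiber $F$ of $\widetilde f$ is smooth and birational to the abelian variety $A$ of dimension $g$, so $\dim B=2n-g$; moreover $H^0(F,\Omega^1_F)=:V$ has dimension $g$ and trivializes $\Omega^1_F$ by translation-invariant (hence closed) forms, $H^0(F,\Omega^2_F)=\wedge^2V$, and on a complex torus the period map $H^0(F,\Omega^p_F)\hookrightarrow H^p(F,\mathbb C)$ is injective, so a holomorphic form on $F$ vanishes as soon as its class does. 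The invariant to understand is $\widetilde\sigma|_F\in\wedge^2V$.

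The inequality $\dim B\le n$, that is $g\ge n$, is the easy half and uses only the hyper-K\"ahler numerology. Let $\lambda\in H^{1,1}(X)$ be the (nef) class pulled back from an ample class $h$ on $B$ through $f$. Since it comes from the lower-dimensional $B$ one has $\lambda^{2n}=0$, and the Fujiki relation $\int_X\lambda^{2n}=c_X\,q(\lambda)^n$ with $c_X\ne0$ forces the Beauville--Bogomolov--Fujiki square $q(\lambda)$ to vanish. Expanding any top intersection $\int_X\lambda^{n+1}\cup(\,\cdot\,)$ by Fujiki as a sum over perfect matchings of $q$-pairings, and noting that with $n+1$ copies of $\lambda$ against only $n-1$ other factors some two copies of $\lambda$ are always matched together, every term carries a factor $q(\lambda,\lambda)=q(\lambda)=0$; hence $\lambda^{n+1}=0$. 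As $\lambda^{\dim B}$ is a nonzero multiple of the class of a fiber, this gives $\dim B\le n$.

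The reverse inequality $\dim B\ge n$, equivalently $\dim F=g\le n$, is where the abelian hypothesis is indispensable, and I expect it to be the main obstacle. It is equivalent to the isotropy of the general fiber, $\widetilde\sigma|_F=0$: indeed if $F$ is isotropic then $T_F$ lies in its $\sigma$-orthogonal, whence $g\le n$. The point is that the Fujiki calculus alone does \emph{not} give isotropy here --- testing $\int_X\sigma\wedge\bar\sigma\wedge\omega^{g-2}\wedge\lambda^{2n-g}$, the pairing analysis (using $q(\lambda)=0$ and that $\sigma$, being of type $(2,0)$, is $q$-orthogonal to every $(1,1)$-class with $q(\sigma,\sigma)=0$) only forces vanishing when $g\le n$, and is perfectly consistent with a non-isotropic fibration when $g\ge n+1$. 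So one must use the structure of $A$. I would assemble the spaces $V_b=H^0(F_b,\Omega^1)$ into the weight-one variation of Hodge structure $R^1\widetilde f_*\mathbb C$ over the smooth locus of $\widetilde f$, and exploit the two rigidity features of a hyper-K\"ahler manifold that the abelian fibers violate in the forbidden range: $h^{1,0}(X)=0$ and $h^{2,0}(X)=1$. Concretely, isotropy of $F$ places $[\widetilde\sigma]$ in the first step of the Leray filtration, where it maps to the $(2,0)$-part of $H^1(B,R^1\widetilde f_*\mathbb C)$; conversely a non-isotropic $\widetilde\sigma|_F$ would produce, through the $g\ge n+1$ invariant one-forms of the fiber together with the nondegeneracy of $\sigma$, either a nonzero global holomorphic one-form on $X$ or a second holomorphic two-form, contradicting $h^{1,0}(X)=0$ and $h^{2,0}(X)=1$. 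Turning this abundance-versus-scarcity heuristic into a proof is the crux.

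Granting isotropy, the two halves combine to $\dim B=n$, hence $g=n$ and $\dim\widetilde f=n$. Finally $\sigma$ induces a perfect pairing between $T_F$ and $T_{\widetilde X}/T_F^{\perp_\sigma}$, so $T_F^{\perp_\sigma}$ has the same rank $n$ as $T_F$; since $F$ is isotropic, $T_F\subseteq T_F^{\perp_\sigma}$ and therefore $T_F=T_F^{\perp_\sigma}$. The general fiber is Lagrangian, and the fibration is Lagrangian as claimed.
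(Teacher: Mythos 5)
There are two genuine gaps, one fatal. Your ``easy half'' ($g\ge n$, i.e.\ $\dim B\le n$) is not correct: since $f$ is only a rational map, the class $\lambda=f^*h$ must be defined on a resolution $\tau\colon\widetilde X\to X$, and $\widetilde X$ is no longer hyper-K\"ahler, so the Fujiki relation is unavailable there; pushing forward to $X$, the class $\tau_*\tilde f^*h$ does not satisfy $\lambda^{2n}=0$ (one has $\tau^*\tau_*\tilde f^*h=\tilde f^*h+E$ with $E$ exceptional), so you cannot conclude $q(\lambda)=0$. In fact the conclusion you are claiming at that stage --- $\dim B\le n$ for an arbitrary rational fibration, with no use of the abelian hypothesis --- is false: hyper-K\"ahler $2n$-folds do admit rational fibrations onto bases of dimension $2n-1$ (e.g.\ the Fano variety of lines of a cubic fourfold fibered in genus-$4$ curves over a hyperplane section, as in the introduction; the whole of Theorem~\ref{theocurves} studies such fibrations). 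Matsushita's bound $\dim B\le n$ is a theorem about \emph{morphisms}, and the abelian hypothesis is indispensable for both inequalities here, not just one. Your second half (isotropy, hence $g\le n$) is explicitly left as a heuristic (``turning this\dots into a proof is the crux''), so the proposal does not contain a proof of either inequality.

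The paper's mechanism is quite different and worth comparing. From the birational translation action of ${\rm Alb}(\widetilde X_t)$ on the fiber it builds, for each $k$, a fiberwise rational self-map $\Psi_k\colon X\dashrightarrow X$, $\Psi_k(x)=x+k\,{\rm alb}(Dx-Z_t)$, which restricts on each fiber to multiplication by $kD+1$, hence has degree $(kD+1)^{2g}$. Since $\Psi_k^*\sigma_X=\mu\,\sigma_X$, integrating $\sigma_X^n\wedge\overline{\sigma_X}^n$ gives $\mu^{2n}=(kD+1)^{2g}$, while the fiberwise action forces $\mu=(kD+1)^2$ if the fibers are non-isotropic (action on transcendental $H^2$ of the fiber) and $\mu=kD+1$ if they are isotropic (action on $H^0(\Omega^1)$ twisted by $\Omega_{B,t}$). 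The first branch yields $g=2n$, absurd, so the fibers are isotropic and $g=n$: both inequalities and the Lagrangian conclusion drop out of a single degree count. Your Hodge-theoretic scarcity idea is in the spirit of what the paper does \emph{after} the proof for mere coverings by abelian varieties (via Kuga--Satake and \cite{vGvoisin}, \cite{charles}), but that route needs extra hypotheses ($X$ very general, $g<2^{\llcorner(b_{2,{\rm tr}}-3)/2\lrcorner}$) and does not give the unconditional statement of Theorem~\ref{theoabfib}.
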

 Theorem \ref{theoabfib} is  wrong if we replace ``fibrations" by ``coverings". A counterexample is given by the variety $S^{[n]}$ above and its coverings by elliptic curves.  In Section \ref{sectmaintheo}, we will give  examples with $\rho(X)=1$ of  a very general hyper-K\"{a}hler varieties of dimension $8$ swept-out by varieties birational to abelian surfaces.
Note that, if instead of studying rational maps, we consider  actual morphisms from $X$ to a smaller dimensional basis $B$, then we already  know they are quite restricted when $X$ is a hyper-K\"{a}hler manifold. Indeed, if $B$ is not a point, Matsushita \cite{matsushita1}, \cite{matsushita2} proves that they are given by Lagrangian fibrations and in particular the dimension of $B$ is $n$.

Concerning  the fibering genus, we will prove
\begin{theo} \label{theocurves} Let $X$ be a hyper-K\"{a}hler manifold of dimension $2n$ with  $n\geq 3$ and $b_2(X)_{\rm tr}\geq 5$. Assume that the Mumford-Tate group of the Hodge structure on $H^2(X,\mathbb{Q})_{\rm tr}$ is maximal. Then if $X$ admits a fibration $\phi: X\dashrightarrow B$, with ${\rm dim}\,B=2n-1$,   the general fiber  of $\phi$ has  genus $g\geq {\rm Inf}(n+2, 2^{\llcorner\frac{ b_{2,{\rm tr}}-3}{2}\lrcorner})$. In other words,
$${\rm fibgen}(X)\geq {\rm Inf}(n+2,2^{\llcorner\frac{ b_{2,{\rm tr}}-3}{2}\lrcorner}).$$
\end{theo}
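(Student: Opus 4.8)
The plan is to translate the fibration into a statement about a weight-one variation of Hodge structure on the base, and then to let the maximality of the Mumford--Tate group force that variation to be large. First I would resolve the indeterminacies of $\phi$ to get a smooth projective $\tilde X$ birational to $X$ with a genuine morphism $\tilde\phi\colon\tilde X\to B$ whose general fibre is a smooth curve $C$ of genus $g$. Over the open set $B^{\circ}\subset B$ of smooth fibres this gives a weight-one variation of Hodge structure $\mathbb V:=R^1\tilde\phi_*\mathbb Q_{|B^{\circ}}$ of rank $2g$, with Hodge bundle $\mathcal H^{1,0}=\tilde\phi_*\omega_{\tilde X/B}$ of rank $g$. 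Since $\sigma\in H^{2,0}(\tilde X)=H^{2,0}(X)$ restricts to $0$ on each one-dimensional fibre, $\sigma$ lies in the first step $L^1H^2(\tilde X)$ of the Leray filtration; and it cannot lie in $L^2H^2(\tilde X)=\tilde\phi^*H^2(B)$, because a two-form pulled back from $B$ is degenerate along the fibre directions whereas $\sigma$ is symplectic. Hence $\sigma$ has nonzero image $\bar\sigma\in H^1(B,\mathbb V)$, of Hodge type $(2,0)$.

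Next I would upgrade this to an embedding of Hodge structures. Maximality of the Mumford--Tate group makes $T:=H^2(X,\mathbb Q)_{\rm tr}$ an irreducible weight-two Hodge structure of $K3$ type with group ${\rm SO}(T,q)$, where $m:=\dim T=b_{2,{\rm tr}}$ and $q$ is the Beauville--Bogomolov form, of signature $(2,m-2)$ on $T$. The Leray steps are sub-Hodge structures, so minimality of $T$ (as the smallest sub-Hodge structure whose complexification contains $\sigma$) gives $T\subset L^1H^2(\tilde X)$; and since $T$ is irreducible while $\sigma\notin L^2$, the intersection $T\cap L^2$ vanishes. Therefore $T$ injects into ${\rm Gr}^1_L H^2(\tilde X)=H^1(B,\mathbb V)$ as a sub-Hodge structure.

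The crux is to read off the genus from representation theory. Because $g\ge {\rm Inf}(n+2,2^{\llcorner(m-3)/2\lrcorner})$ is equivalent to the implication ``$g\le n+1\Rightarrow g\ge 2^{\llcorner(m-3)/2\lrcorner}$'', it suffices to treat the case $g\le n+1$. In that range I would argue that the only way a $K3$-type structure $T$ with full orthogonal Mumford--Tate group ${\rm SO}(m)$ can occur inside $H^1(B,\mathbb V)$ is the Kuga--Satake mechanism: the distinguished Hodge cocharacter of a $K3$-type structure factors through the weight-one side only via the Clifford action, so the generic (equivalently derived) Mumford--Tate group of the variation $\mathbb V$ must surject onto ${\rm Spin}(m)$ and the fibre $\mathbb V_b=H^1(C,\mathbb Q)$ must contain a half-spin representation as a subrepresentation. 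Since the smallest faithful symplectic representation of ${\rm Spin}(m)$ is the half-spin representation, of dimension $2^{\llcorner(m-1)/2\lrcorner}$, this forces $2g=\dim\mathbb V_b\ge 2^{\llcorner(m-1)/2\lrcorner}$, that is $g\ge 2^{\llcorner(m-3)/2\lrcorner}$, which completes the proof.

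The main obstacle I expect is precisely this last step: passing from the \emph{fixed} Hodge-theoretic embedding $T\hookrightarrow H^1(B,\mathbb V)$ to a statement about the algebraic monodromy and generic Mumford--Tate group of the \emph{variation} $\mathbb V$, and thereby guaranteeing that the half-spin representation genuinely occurs in a single fibre $H^1(C,\mathbb Q)$ rather than being diluted across the cohomology of the base. Controlling this — presumably by restricting $\mathbb V$ to a general curve in $B$ and invoking Deligne's global invariant cycle theorem together with semisimplicity of the monodromy, while tracking how much of $T$ survives the restriction and using the section $s_\sigma\in H^0(B,\Omega^1_B\otimes\mathcal H^{1,0})$ attached to $\bar\sigma$ — is where the hypotheses $n\ge 3$, $b_{2,{\rm tr}}\ge 5$, and the reduction to $g\le n+1$ should really enter.
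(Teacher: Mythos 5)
Your overall shape is right---the dichotomy ``either $g\geq n+2$ or a Kuga--Satake-type bound $g\geq 2^{\llcorner (b_{2,{\rm tr}}-3)/2\lrcorner}$''---and you have correctly located the main difficulty, but the step you flag as ``the main obstacle'' is exactly the content of the proof, and the mechanism you propose for it does not work as stated. The results you implicitly invoke (van Geemen--Voisin, Charles) say that an irreducible $K3$-type Hodge structure with full orthogonal Mumford--Tate group embedding into $H^1(C)\otimes H^1(C')$, or into $H^2$ of an abelian variety, forces a half-spin (Kuga--Satake) factor inside $H^1(C)$. They say nothing about an embedding $T\hookrightarrow H^1(B^{\circ},\mathbb V)$ for a weight-one variation over a $(2n-1)$-dimensional base: there the weight-one data can be spread across the topology of $B$, and no representation-theoretic principle forces the half-spin representation into a single fibre $H^1(C,\mathbb Q)$. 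Moreover your reduction to the case $g\leq n+1$ is never actually used in your argument, whereas it is the engine of the paper's proof.

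What the paper does to localize the Hodge structure into one fibre is precisely the missing content. First, contraction of $\sigma_X$ against $T_{B,b}$ gives $\sigma_b\colon T_{B,b}\to H^0(\widetilde X_b,\Omega_{\widetilde X_b})$, and a computation on the relative Jacobian (the $(2,0)$-form $\sigma_J$ has generic rank $\geq 2n$ and rank equal to $2\,{\rm rank}\,\sigma_{J,b}$ along the fibres) shows ${\rm rank}\,\sigma_b\geq n$, whence $g\geq n$. Then, when $g\leq n+1$, the image of $\sigma_b$ is all of $H^0(\widetilde X_b,K_{\widetilde X_b})$ or a hyperplane in it, and the Donagi--Markman symmetry of the infinitesimal variation of Hodge structure, combined with non-hyperellipticity of the fibres (Proposition \ref{theofibgen}), forces the kernel of $\sigma_b$ to lie in the kernel of the Kodaira--Spencer map: the moduli map $B\dashrightarrow\mathcal M_g$ has positive-dimensional fibres, so $f$ is isotrivial along a foliation $m'\colon B\dashrightarrow B'$. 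Only now does a single curve $C_{b'}$ appear, via the product $C_{b'}\times\widetilde{B_{b'}}$ dominating $\widetilde X_{b'}$, and the van Geemen--Voisin result is applied to $H^2(X,\mathbb Q)_{\rm tr}\to H^1(C_{b'},\mathbb Q)\otimes H^1(\widetilde{B_{b'}},\mathbb Q)$ to give either $g\geq 2^{\llcorner(b_{2,{\rm tr}}-3)/2\lrcorner}$ or isotropy of the fibres of $f'$. Even then one is not finished: the isotropic case is excluded by mapping $\widetilde X$ to the relative Albanese of $f'$ and producing a nonzero everywhere-degenerate $2$-form on $X$ from its $(g+1)$-dimensional image (this is where $g+1\leq n+2<2n$, i.e.\ $n\geq 3$, enters). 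None of these steps appears in your proposal.
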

Note that the bound in Theorem \ref{theocurves} is presumably not optimal. Looking at the proof, we see that a more natural bound would be ${\rm fibgen}(X)\geq {\rm Inf}(2n-1,2^{\llcorner\frac{ b_{2,{\rm tr}}-3}{2}\lrcorner})$. (This is also the reason for the assumption $n\geq 3$ in Theorem \ref{theocurves}.)
For $n=2$, we do not know what the correct bound is, but we can easily construct an example where the bound $g=n+2$ is achieved. Indeed, let $Y$ be  a smooth cubic fourfold, and let $Y_H\subset Y$ be a hyperplane section.
Let $X$ be the variety of lines of $Y$. It admits a rational map $$ X\dashrightarrow Y_H$$
which to a general point $\delta\in X$ parameterizing a  line $\Delta\subset Y$ associates the intersection point $y:=\Delta\cap Y_H\in Y_H$.
The fiber of this map over a general point $y\in Y_H$ is the curve of lines in $Y$ passing through $y$, and this is well-known (see \cite{clegri})  to be a genus $4$ curve, complete intersection of a quadric and a cubic in $\mathbb{P}^3$.

  Proposition  \ref{theofibgen}   and the example above leave  open the following

\begin{question}\label{question2} Are there  hyper-K\"{a}hler fourfolds   with ${\rm fibgen}=3$? Are there  hyper-K\"{a}hler sixfolds   with ${\rm fibgen}=5$?
\end{question}

We now turn to the measure of irrationality ${\rm irr}\,X$  mentioned at the beginning of this introduction. In the geometric  context we are considering, namely hyper-K\"{a}hler manifolds, which in any case are not rational, there are two   natural variants of this number, namely
\begin{eqnarray}\label{eqRCirr} {\rm RCirr}(X):={\rm Inf}\,{\rm deg}\,\phi,
\end{eqnarray}
where  $\phi$ runs through all the generically finite rational maps $X\dashrightarrow Y$, with $Y$ smooth projective  rationally connected and
\begin{eqnarray}\label{eqcohirr} {\rm cohirr}(X):={\rm Inf}\,{\rm deg}\,\phi,
\end{eqnarray}
where    $\phi$ runs through all the generically finite rational maps $X\dashrightarrow Y$, with $Y$ smooth projective  with $H^0(Y,\Omega_Y^l)=0$ for $l>0$.
\begin{rema}{\rm  When $X$ is a hyper-K\"{a}hler fourfold, it is equivalent in (\ref{eqcohirr}) to consider the   smooth projective varieties $Y$ with $H^0(Y,K_Y)=0$, since the existence of a dominant generically finite  rational map $\phi: X\dashrightarrow Y$ then implies that  $H^0(Y,\Omega_Y^l)=0$ for $l>0$. Indeed, if $Y$ has a holomorphic $2$-form, it is generically nondegenerate since it pulls-back to the holomorphic $2$-form on $X$, hence $h^0(Y,K_Y)\not=0$.}
\end{rema}
Obviously ${\rm cohirr}(X)\leq  {\rm RCirr}(X)\leq {\rm irr}(X)$. The invariant ${\rm cohirr}(X)$ has been studied  by Alzati and Pirola  in \cite{alzatpirola}.
A particular case of their results is
\begin{theo}\label{theomir} Let $X$ be a hyper-K\"{a}hler manifold of dimension $2n$.  Then
${\rm cohirr}(X)\geq n+1$. In particular, if ${\rm dim}\,X\geq 4$, one has ${\rm cohirr}(X)\geq 3$. If ${\rm dim}\,X\geq 6$, one has ${\rm cohirr}(X)\geq 4$.
\end{theo}
Combining Theorem \ref{theomir} and Theorem \ref{theocurves}, we will prove in Section \ref{secgenineq}
\begin{coro}\label{corotardif} Let  $X$ be a hyper-K\"{a}hler manifold of dimension $\geq 6$. Assume that $b_2(X)_{\rm tr}\geq 9$ and $X$ is very general with given Picard number. Then ${\rm fibgon}(X)\geq 4$.
\end{coro}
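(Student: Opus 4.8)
The plan is to rule out the value $3$, which by Proposition \ref{theofibgen} is the only remaining possibility, by manufacturing from a hypothetical trigonal fibration a generically $3:1$ rational map onto a variety \emph{without} holomorphic forms; this would force $\mathrm{cohirr}(X)\leq 3$ and contradict Theorem \ref{theomir}.

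First I would record that $\mathrm{dim}\,X=2n\geq 6$ gives $n\geq 3$, so Proposition \ref{theofibgen} already yields $\mathrm{fibgon}(X)\geq 3$ and it suffices to exclude $\mathrm{fibgon}(X)=3$. Assume then a fibration $\phi\colon X\dashrightarrow B$ into curves whose general fiber $C$ is trigonal, with $\mathrm{dim}\,B=2n-1$. The hypotheses $n\geq 3$ and $b_2(X)_{\mathrm{tr}}\geq 5$, together with maximality of the Mumford--Tate group (valid because $X$ is very general in its family), let me apply Theorem \ref{theocurves}, which bounds the genus of $C$ below by $\mathrm{Inf}(n+2,2^{\lfloor (b_{2,\mathrm{tr}}-3)/2\rfloor})$; I would use this to guarantee $g(C)\geq 5$. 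This is the decisive input, since a trigonal curve of genus at least $5$ carries a \emph{unique} $g^1_3$, so the trigonal pencils vary algebraically and globally with $b\in B$, with no monodromy ambiguity.

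Granting uniqueness, the relative $g^1_3$ assembles into a generically $3:1$ rational map $\psi\colon X\dashrightarrow P$, where $P\to B$ is a $\mathbb{P}^1$-fibration whose fiber over $b$ is the target of the trigonal pencil on $C_b$; as $\mathrm{dim}\,P=2n=\mathrm{dim}\,X$, the map $\psi$ is generically finite of degree $3$. It remains to check that $P$ has no nonzero holomorphic forms. Since $P$ is birational to a $\mathbb{P}^1$-bundle over $B$, one has $H^0(P,\Omega_P^l)\cong H^0(B,\Omega_B^l)$, so the claim reduces to $H^0(B,\Omega_B^l)=0$ for $l>0$. Resolving $\phi$ to a morphism, pullback of holomorphic forms is injective, so $H^0(B,\Omega_B^l)$ injects into $H^0(X,\Omega_X^l)$; the latter vanishes for $l$ odd and equals $\mathbb{C}\,\sigma^m$ for $l=2m$, where $\sigma$ is the holomorphic symplectic form. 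A nonzero class in $H^0(B,\Omega_B^{2m})$ would force $\sigma^m$ to be a pullback from $B$, i.e. $i_v(\sigma^m)=0$ for every $v$ tangent to the one-dimensional fibers; but for $0\neq v$ one has $i_v\sigma\neq 0$ by nondegeneracy, and $i_v(\sigma^m)=m\,(i_v\sigma)\wedge\sigma^{m-1}\neq 0$ because $\wedge\,\sigma^{m-1}$ is injective on $1$-forms for $m\leq n$ (hard Lefschetz for the symplectic form). Hence $B$, and with it $P$, carries no holomorphic forms, and $\psi$ exhibits $\mathrm{cohirr}(X)\leq 3$, contradicting Theorem \ref{theomir}; therefore $\mathrm{fibgon}(X)\geq 4$.

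The main obstacle is the globalization of the trigonal pencil, and this is exactly where the combination of Theorems \ref{theomir} and \ref{theocurves} is essential: only the genus lower bound secures uniqueness of the $g^1_3$ and hence a well-defined target $P$ over all of $B$. The delicate range is small $b_{2,\mathrm{tr}}$, where the genus is controlled by $2^{\lfloor(b_{2,\mathrm{tr}}-3)/2\rfloor}$ rather than by $n+2$; there one must either verify directly that $g(C)\geq 5$ (so that the $g^1_3$ is still unique), or handle the finitely many pencils by descending to a suitable quotient of $B$ on which a single pencil is defined, taking care that this does not inflate $\deg\psi$ beyond $3$. By comparison the symplectic Lefschetz computation showing that $B$ has no holomorphic forms is routine.
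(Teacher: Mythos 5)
Your argument is essentially the paper's: the paper proves exactly your trigonal-pencil construction as case (iii) of Proposition \ref{theocomp} (genus $\geq 5$ plus non-hyperellipticity gives a unique $g^1_3$, hence a degree-$3$ rational map to a $\mathbb{P}^1$-fibration $P$ over $B$), feeds in ${\rm fibgen}(X)\geq 5$ from Theorem \ref{theocurves} and ${\rm cohirr}(X)\geq 4$ from Theorem \ref{theomir}, and reaches the same contradiction --- the only cosmetic difference is that the paper invokes rational connectedness of $B$ (hence of $P$) via \cite{lin} and contradicts ${\rm RCirr}(X)\geq 4$, whereas you verify $H^0(P,\Omega_P^l)=0$ by hand and contradict ${\rm cohirr}(X)\geq 4$ directly. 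The one point you flag as delicate --- whether Theorem \ref{theocurves} really forces genus $\geq 5$ when $b_{2,{\rm tr}}$ is small --- is a dependence the paper's own proof shares, so it does not distinguish your argument from theirs.
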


  It is likely that a better  lower bound for  Theorem \ref{theomir}, maybe depending on numerical data as in  \cite{martino},  which studies the case of abelian surfaces.  In  the case of dimension $4$, we leave this  as
\begin{question} \label{question1}  Let $X$ be a hyper-K\"{a}hler fourfold which is very general  with fixed Picard number. Is it true that  ${\rm cohirr}(X)\geq 4$?
\end{question}
We prove one result in this direction in Section \ref{secmir}, namely Proposition \ref{prodeg3} which is used in the  last section of the paper.    We  establish there a generalization of a result  of O'Grady (see \cite{ogradyccm} or Theorem \ref{theoogadry}). O'Grady studies the rational map $\phi_L:X\dashrightarrow\mathbb{P}^5$ induced by the complete linear system $|L|$, for   a line bundle $L$ of top self-intersection $12$ on a compact K\"{a}hler fourfold $X$ which is numerically equivalent to $K3^{[2]}$. Assuming $X$ is very general with Picard number $1$, O'Grady proves that the image of $\phi_L$ is a hypersurface of degree $\geq 6$.
We prove a similar result (see Theorem  \ref{propLMnef3}) under different  assumptions. First of all, $X$ is only known to have the same Betti numbers, Chern numbers, and Fujiki constant as a hyper-K\"{a}hler fourfold of type $K3^{[2]}$. Second, in our case, the line bundle is the sum $L+M$, where both $L$ and $M$ are numerically effective and satisfy the intersection conditions
\begin{eqnarray}\label{eqincon}L^4=0,\,M^4=0\,\,L^2M^2=2,\end{eqnarray}
which implies $(L+M)^4=12$.
Our result is
\begin{theo}\label{theomerdique}  Under the assumptions above, assuming $X$ is very general with Picard number $2$ and $h^0(X,L)=0$,
the image of $\phi_{L+M}: X\dashrightarrow\mathbb{P}^5$ is not rationally connected.
\end{theo}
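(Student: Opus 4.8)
The plan is to follow O'Grady's strategy (Theorem \ref{theoogadry}) and reduce the statement to a degree estimate for $\phi_{L+M}$. First I would extract the numerical data from (\ref{eqincon}). Since $X$ has the Fujiki constant of $K3^{[2]}$, the Fujiki relation reads $\alpha^4=3\,q(\alpha)^2$ for the Beauville--Bogomolov form $q$; polarizing it and substituting (\ref{eqincon}) gives $q(L)=q(M)=0$ and $q(L,M)=1$, so $L,M$ span a hyperbolic plane in $\mathrm{NS}(X)_{\mathbb Q}$, $q(L+M)=2$ and $(L+M)^4=12$. Thus $L+M$ is nef and big. Riemann--Roch on a fourfold with the Hodge numbers of $K3^{[2]}$ gives $\chi(X,L+M)=\binom{q(L+M)/2+3}{2}=6$; the very general hypotheses and the nef-and-bigness of $L+M$ give the vanishing of higher cohomology, so $h^0(X,L+M)=6$ and $\phi_{L+M}\colon X\dashrightarrow\mathbb P^5$ is the map of the statement. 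Set $Y:=\overline{\phi_{L+M}(X)}$.

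Next I would reduce to $\dim Y=4$. A positive--dimensional general fibre of $\phi_{L+M}$ would make it a fibration $X\dashrightarrow Y$ with $\dim Y\le 3$; as in the proof of Theorem \ref{theocurves}, maximality of the Mumford--Tate group forces $H^2(X,\mathbb Q)_{\mathrm{tr}}$ to be an irreducible Hodge structure of rank $b_2-2=21$, which is too large to be built out of the cohomology of a base and fibres of dimension $\le 3$; together with the bigness of $L+M$ and the constraint $h^0(X,L)=0$ on the fixed part of $|L+M|$, this excludes $\dim Y\le3$. Hence $Y$ is an irreducible hypersurface in $\mathbb P^5$, say of degree $e$; it is Fano and rationally connected for $e\le5$, while for $e\ge6$ a desingularization carries a non-zero holomorphic $4$-form and $Y$ is not rationally connected. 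Note also that, by the same Mumford--Tate argument, the symplectic form does not descend to $Y$, so $Y$ carries no holomorphic $2$-form; the obstruction to rational connectedness must therefore be of Kodaira--dimension type, i.e. the statement is equivalent to $e\ge6$.

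The heart of the proof is thus the inequality $e\ge6$, which I expect to be the main obstacle. Writing $|L+M|=|P|+F$ with $P$ moving and $F$ the fixed divisor, one has $(\deg\phi_{L+M})\cdot e=P^4\le(L+M)^4=12$, where the last inequality uses that $P$ and $L+M$ are nef and $F$ is effective. If $\phi_{L+M}$ is birational onto $Y$, then $Y$ is birational to $X$, hence not rationally connected, and $e=12$; if $\deg\phi_{L+M}=2$ and $F=0$, then $e=6$ and $Y$ is a sextic fourfold with trivial canonical bundle, again not rationally connected. What must be excluded is a non--trivial fixed divisor $F$ and a covering degree $\deg\phi_{L+M}\ge3$, each of which would lower $e$ to at most $5$ and make $Y$ a rationally connected Fano hypersurface. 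This is exactly the point where the hypotheses $\rho(X)=2$, $h^0(X,L)=0$ and ``$X$ very general'' are indispensable, and where O'Grady's analysis must be adapted: the first two hypotheses pin down the effective cone of $X$, so that no effective fixed divisor can absorb part of the self--intersection $(L+M)^4$, while a direct study of the linear system $|L+M|$ and of the subvarieties it contracts---in the spirit of Theorem \ref{theoogadry} and of Proposition \ref{prodeg3}---rules out the generically finite covers of degree $\ge3$ onto a hypersurface of degree $\le4$. Establishing these two exclusions yields $e\ge6$ and finishes the proof.
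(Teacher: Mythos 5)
Your numerical setup (the Beauville--Bogomolov data, $(L+M)^4=12$, $h^0(X,L+M)=6$) is correct, but the proposal has two genuine gaps. The first is the reduction to $\dim Y=4$: the Mumford--Tate argument you invoke cannot exclude $\dim Y\le 3$, because hyper-K\"ahler fourfolds genuinely admit fibrations into curves over threefolds (the paper's own example of the Fano variety of lines of a cubic fourfold mapping to a hyperplane section $Y_H$ with genus-$4$ fibers) as well as Lagrangian fibrations over surfaces; note also that Theorem \ref{theocurves} is only proved for $2n\ge 6$ and its bound for $n=2$ would in any case be too weak. The paper instead disposes of the surface case and of the threefold cases of degree $3,\dots,6$ by a long case analysis resting on Lemma \ref{nolmsur2crucial} (there is no surface class $[\Sigma]$ with $(l+m)^2-3[\Sigma]$ pseudoeffective), on the hypothesis that no member of $|L+M|$ is reducible, on Claim \ref{claimpourle63} pinning the fiber class and genus, and, for the degree-$6$ threefold, on a delicate local study of the unique indeterminacy point.

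The second gap is that, even granting $\dim Y=4$, the exclusion of generically finite maps of degree $\ge 3$ onto hypersurfaces of degree $2$, $3$, $4$ is precisely the hardest part of the argument, and you defer it entirely to a study ``in the spirit of'' Proposition \ref{prodeg3}. That proposition applies only when $\phi_{L+M}$ is a quasi-finite \emph{morphism}, which is forced only in the extremal case $\deg\phi_{L+M}\cdot\deg Y=12$ (this is how the paper handles the quartic fourfold in Lemma \ref{leeasyC}, and the $4:1$ cover of a cubic in Lemma \ref{le3sur3dim4}). The quadric and cubic cases (Lemma \ref{leeasyD} and Proposition \ref{procubquatre}) require a completely different analysis: one shows via multiplication maps on the curves $\phi_{L+M}^{-1}(C)$ that $|2L+M|$ and $|3L+2M|$ descend to $Y$, then derives contradictions from the Picard group of $Y_{\rm reg}$, the count of conditions imposed on quadrics, and ultimately from the hypothesis $h^0(X,L)=0$. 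None of this is sketched in your proposal, so as it stands it is a plan whose first reduction is false and whose main step is unproved.
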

Although this result may seem  a bit specific, this statement  is  needed in order  to conclude the proof of the main result
in \cite{DHMV}, namely that a hyper-K\"{a}hler fourfold  $X$ admitting two integral degree $2$ cohomology classes $l,\,m$ satisfying the condition (\ref{eqincon})  has to be of $K3^{[2]}$ deformation type.

Theorem \ref{theomerdique} is proved by a case-by-case analysis. As will be clear from the proof, a positive answer to Question \ref{question1} and a negative answer to Question \ref{question2} would  greatly simplify the   proof, since   by   Lemmas \ref{leeasyA}, \ref{leeasyB}, \ref{leeasyC}, \ref{leeasyD}, \ref{le3sur3dim4} and Claim \ref{claimpourle63})  the most difficult cases to exclude  are those where    $X$ is fibered into curves of genus $3$, or $\phi_{L+M}$ has degree $3$ on its image.

\vspace{0.5cm}

{\bf Thanks.} {\it I am grateful to the scientific committee of the   2021 Mathematical Congress of the Americas  for inviting me to  this beautiful conference. I thank Ciro Ciliberto and Chritian Peskine for their help, in particular  for the proof of Proposition \ref{proquatrecinqtrois}, and Olivier Martin for  mentioning the work of Alzati and Pirola.  I thank the referee for his/her wonderful refereeing work and for numerous  constructive comments and  suggestions.}

\section{Fibrations of hyper-K\"{a}hler manifolds   by curves and  abelian varieties \label{sectmaintheo}}
\subsection{Some general inequalities\label{secgenineq}}
We start by establishing easy lower bounds for the fibering genus and  gonality, and  various  irrationality  invariants of hyper-K\"{a}hler manifolds.
\begin{lemm} \label{proinvol} (see also \cite{ogradyccm}) Let $X$ be a hyper-K\"ahler manifold of dimension $2n$. If there exists a dominant  rational map
$\phi: X\dashrightarrow Y$  of degree $2$, where $Y$ is a  smooth projective variety, then
 $h^0(Y,\Omega_Y^{4k})=1$ for $k\leq n$. In particular, the cohomological  measure of irrationality
${\rm cohirr}(X)$  of a hyper-K\"{a}hler $2n$-fold   is strictly greater than $2$ for  $n\geq2$.
\end{lemm}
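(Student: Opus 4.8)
The plan is to exploit the fact that a degree-$2$ dominant map comes equipped with a covering involution, and that the \emph{square} of the holomorphic symplectic form is a nonzero holomorphic $4$-form which is automatically invariant under this involution, hence must descend to $Y$.

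First I would produce the covering involution. A dominant rational map $\phi:X\dashrightarrow Y$ of degree $2$ between smooth projective varieties of the same dimension induces a degree-$2$ field extension $\mathbb{C}(X)/\phi^*\mathbb{C}(Y)$, which is Galois since we are in characteristic $0$. Its nontrivial automorphism corresponds to a birational involution $\iota:X\dashrightarrow X$ with $\phi\circ\iota=\phi$ as rational maps, i.e. $\iota^*$ fixes $\phi^*\mathbb{C}(Y)$ pointwise. Because holomorphic $p$-forms are birational invariants of smooth projective varieties, $\iota$ acts on each $H^0(X,\Omega_X^p)$, and $\phi$ induces an injection $\phi^*:H^0(Y,\Omega_Y^p)\hookrightarrow H^0(X,\Omega_X^p)$ whose image lies in the invariant subspace $H^0(X,\Omega_X^p)^{\iota}$. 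The input I would record here — this is where the structure of a degree-$2$ cover is used — is the reverse inclusion: an $\iota$-invariant holomorphic form on $X$ descends to a holomorphic form on $Y$, so that $\phi^*H^0(Y,\Omega_Y^p)=H^0(X,\Omega_X^p)^{\iota}$.

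Next comes the heart of the argument, which is the reason an even power of $\sigma$ is used. Let $\sigma$ span the $1$-dimensional space $H^0(X,\Omega_X^2)$. Since $\iota^*$ preserves this line and $(\iota^*)^2=\mathrm{id}$, we have $\iota^*\sigma=\pm\sigma$, whence
$$\iota^*(\sigma^2)=(\iota^*\sigma)^2=\sigma^2,$$
so $\sigma^2$ is $\iota$-invariant \emph{regardless} of the sign. As $\dim X\geq 4$, i.e. $n\geq 2$, the power $\sigma^2$ is a nonzero element of $H^0(X,\Omega_X^4)$ (the powers $\sigma^k$ are nonzero for $k\leq n$). By the descent statement, $\sigma^2$ is therefore the pullback $\phi^*\beta$ of a necessarily nonzero $\beta\in H^0(Y,\Omega_Y^4)$, so $h^0(Y,\Omega_Y^4)\geq 1$, contradicting the hypothesis $h^0(Y,\Omega_Y^4)=0$. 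This shows no such $\phi$ can exist.

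The main obstacle is precisely the descent statement: that an $\iota$-invariant holomorphic $4$-form on $X$ is the pullback of a holomorphic form on $Y$. On the open locus where $\phi$ is \'etale the invariant forms are exactly the pullbacks, and the candidate descended form is the trace $\frac{1}{2}(\mathrm{id}+\iota^*)\sigma^2$; the delicate point is that this extends holomorphically across the ramification divisor. This is a standard property of the trace map for finite maps between smooth varieties (equivalently, it follows from the extension theorem for holomorphic forms over the quotient singularities of $X/\iota$, followed by passage to the smooth model $Y$), and I would invoke it, comparing with \cite{ogradyccm}. Finally, the ``in particular'' follows immediately: for a hyper-K\"ahler $2n$-fold with $n\geq 2$ one has $\dim X=2n\geq 4$, so a degree-$2$ map realizing $\mathrm{cohirr}(X)=2$ would have target $Y$ with $H^0(Y,\Omega_Y^l)=0$ for all $l>0$, in particular $h^0(Y,\Omega_Y^4)=0$, which is excluded; moreover $\mathrm{cohirr}(X)\neq 1$ since any birational model of $X$ carries the nonzero form $\sigma\in H^0(X,\Omega_X^2)$. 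Hence $\mathrm{cohirr}(X)>2$.
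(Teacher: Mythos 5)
Your proof is correct and follows essentially the same route as the paper: produce the covering involution $\iota$ of the degree-$2$ map, observe that $\iota^*\sigma_X^2=\sigma_X^2$, and descend the nonzero $(4,0)$-form $\sigma_X^2$ to $Y$ to contradict $h^0(Y,\Omega_Y^4)=0$. The only (harmless) difference is that the paper first shows $h^0(Y,\Omega_Y^2)=0$ to pin down the sign $\iota^*\sigma_X=-\sigma_X$, whereas you correctly note that the sign is irrelevant once one squares.
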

\begin{proof} We observe that,  as $\phi$ has degree $2$, there is a rational involution
$\iota$  on $X$ over $Y$. As $h^0(X,\Omega_X^2)=1$, one has $\iota^*\sigma_X=\pm\sigma_X$.  It follows that $\iota^*\sigma_X^{2k}=\sigma_X^{2k}$ for any integer $k$. Thus the $(4k,0)$-form $\sigma_X^{2k}$ on $X$, which is nonzero for $2k\leq n$,  descends to $Y$, proving the inequality $h^0(Y,\Omega_Y^{4k})\geq1$ for $k\leq n$. The inequality  $h^0(Y,\Omega_Y^{4k})\leq1$ for $k\leq n$ follows from the fact that $\phi^*$ is injective on holomorphic forms.
\end{proof}

We now  apply this result to the proof of Proposition \ref{theofibgen}.
\begin{proof}[Proof of Proposition  \ref{theofibgen}]
 We first prove
 \begin{lemm}\label{leelliptic}  Let $X$ be a hyper-K\"{a}hler $2n$-fold with $n\geq 2$. Then $X$ does not admit a fibration
 $\phi:X\dashrightarrow Y$ into elliptic curves, hence ${\rm fibgen}(X)\geq2$.
 \end{lemm}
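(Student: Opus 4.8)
The plan is to exploit the holomorphic symplectic form $\sigma_X$ together with the fact that the general fiber of a fibration into curves is automatically isotropic, the genus-$1$ hypothesis entering only through the \emph{triviality} of the tangent bundle of an elliptic fiber. After reducing to a morphism, the strategy is to split off from $\sigma_X$ a purely ``horizontal'' holomorphic $2$-form coming from the base, to check that this form is nonzero as soon as $n\ge 2$, and to reach a contradiction with the one-dimensionality of $H^{2,0}(X)=\mathbb{C}\,\sigma_X$.

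First I would resolve the indeterminacy of $\phi\colon X\dashrightarrow Y$ by a birational morphism $\tau\colon \tilde X\to X$, so that $\psi:=\phi\circ\tau\colon \tilde X\to Y$ is a morphism, and set $\sigma:=\tau^*\sigma_X\in H^0(\tilde X,\Omega^2_{\tilde X})$. Over the open locus $Y^\circ\subset Y$ above which $\psi$ is a smooth elliptic fibration, the general fiber $E$ is a curve, so $\sigma|_E=0$ and $E$ is isotropic: for $v$ tangent to $E$ the $1$-form $\iota_v\sigma$ kills $T_E$, hence lies in $\psi^*\Omega_Y$. Because $E$ is elliptic, $T_{\tilde X/Y}|_E$ is trivial of degree $0$, so contracting $\sigma$ with a relative vector field $\partial$ (a generator of $T_{\tilde X/Y}$, up to the Hodge-bundle twist) produces a well-defined nonzero base direction. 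Choosing a splitting of $0\to \psi^*\Omega_Y\to \Omega_{\tilde X}\to \Omega_{\tilde X/Y}\to 0$ with dual relative $1$-form $\theta$, one may write on $\psi^{-1}(Y^\circ)$
\[
\sigma=\psi^*\alpha\wedge\theta+\psi^*\beta,
\]
where $\alpha=\iota_\partial\sigma$ is a (twisted) $1$-form on $Y^\circ$ and $\beta$ is a holomorphic $2$-form on $Y^\circ$; there is no $\Omega^2_{\tilde X/Y}$-term since the fibers are curves.

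The next step is to read off the rank of $\beta$. Nondegeneracy of $\sigma$ gives $\sigma^n\ne 0$; since $\beta$ is pulled back from the $(2n-1)$-fold $Y$ one has $\psi^*(\beta^n)=0$, and the volume form is $\sigma^n=n\,\psi^*(\alpha\wedge\beta^{n-1})\wedge\theta$. As $Y$ is odd-dimensional the rank of $\beta$ is at most $2(n-1)$, and the nonvanishing of $\sigma^n$ forces it to equal $2(n-1)$; in particular $\beta\ne 0$ as soon as $n\ge 2$. On the other hand the horizontal form $\psi^*\beta$ is degenerate, its kernel containing the fiber direction $\partial$, whereas $\sigma$, being symplectic, has trivial kernel. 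Thus $\psi^*\beta$ cannot be a nonzero multiple of $\sigma$. Since $H^{2,0}(X)=\mathbb{C}\,\sigma_X$ is one-dimensional, every holomorphic $2$-form on $X$ is proportional to $\sigma_X$, and this should force $\psi^*\beta=0$, i.e. $\beta=0$ and $n=1$, contradicting $n\ge 2$.

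The main obstacle is precisely this last comparison: one must promote the open, choice-dependent construction of $\beta$ to a global statement, i.e. show that the horizontal component of $\sigma$ defines an honest class in $H^{2,0}(X)$ despite the indeterminacy locus of $\phi$, the degenerate fibers, and the twist by the Hodge bundle $\psi_*\omega_{\tilde X/Y}$. I expect this extension/descent step to be the only delicate point; the linear algebra above is routine. A cleaner way to package the conclusion is to note that the argument produces a transverse (reduced) symplectic structure of rank $2(n-1)$ on the base, so the fibration can be isotropic only if it is Lagrangian, i.e. only if the fiber dimension equals $n$; this is exactly the $g=1$ instance of Theorem~\ref{theoabfib}, which already yields $g=n$ and hence $n=1$. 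Combined with the non-uniruledness of $X$ (its canonical bundle is trivial, so $X$ carries no fibration into rational curves), this rules out fibers of genus $0$ and $1$, giving ${\rm fibgen}(X)\ge 2$.
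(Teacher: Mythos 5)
Your primary argument has a genuine gap, and it is exactly the point you defer at the end as ``the only delicate point'': the form $\psi^*\beta$ is never shown to exist as a holomorphic $2$-form on $\widetilde{X}$, so the contradiction with $\dim H^{2,0}(X)=1$ never materializes. The decomposition $\sigma=\psi^*\alpha\wedge\theta+\psi^*\beta$ requires a splitting of $0\to \psi^*\Omega_Y\to\Omega_{\widetilde{X}}\to\Omega_{\widetilde{X}/Y}\to 0$, whose obstruction (the Kodaira--Spencer class) is nonzero in general; under a change of splitting $\theta\mapsto\theta+\psi^*\gamma$ one gets $\beta\mapsto\beta-\alpha\wedge\gamma$, so $\beta$ is only well defined modulo $\alpha\wedge\Omega_{Y^\circ}$, and both $\theta$ and $\alpha$ additionally carry a twist by the (generally nontrivial) Hodge line bundle of the elliptic fibration. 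The pointwise linear algebra is correct ($\sigma^n=n\,\psi^*(\alpha\wedge\beta^{n-1})\wedge\theta$ forces $\beta\neq 0$ for $n\geq 2$), but the only canonical object it produces is the nondegenerate form induced on $(T_xF)^{\perp_\sigma}/T_xF$, which corresponds to a twisted $2$-form on a corank-one distribution on $Y^\circ$, not to a holomorphic $2$-form on $Y$ or on $X$. Producing a second, degenerate element of $H^{2,0}(X)$ is therefore not a technical extension step to be filled in later; it is the entire content of the argument, and I do not see how to complete it along these lines.

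Your closing remark, however, does contain a complete proof, and you should promote it from ``repackaging'' to the actual argument: an elliptic fibration is the $g=1$ case of Theorem~\ref{theoabfib}, which yields $g=n$ and hence $n=1$; the proof of that theorem (via the fiberwise self-maps $\Psi_k$ and the computation of $\Psi_k^*\sigma_X$) nowhere uses the present lemma, so there is no circularity -- this is one of the ``several proofs'' the paper alludes to. For comparison, the paper's own proof is entirely different and more elementary: since a general fiber $F$ has trivial normal bundle, adjunction gives $K_{\widetilde{X}}\cdot F=2g-2=0$; as $K_{\widetilde{X}}$ is effective with support the exceptional divisor of $\tau$, the general fiber must avoid that divisor, so $\phi$ is quasi-holomorphic, and Matsushita's theorem then bounds the dimension of the image by $n<2n-1$. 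Note that this is where the genus-one hypothesis really enters (via $\deg K_F=0$), not through triviality of the relative tangent bundle. Your final sentence ruling out genus $0$ via non-uniruledness is fine but is not needed for the statement of the lemma.
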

 \begin{proof} Let $\tau:\widetilde{X}\rightarrow X$, $\tilde{\phi}:\widetilde{X}\rightarrow Y$ be a resolution of the indeterminacies of $\phi$, with $\widetilde{X}$ smooth. Then, as the general  fiber $F$ of $\tilde{\phi}$ is elliptic, one has $K_{\widetilde{X}\mid F}=\mathcal{O}_F$. But $K_{\widetilde{X}}$ has a section, whose divisor has for support the exceptional divisor of $\tau$. It follows that $F$ does not intersect the exceptional divisor of $\tau$. In other words, $\phi$ is quasiholomorphic. This contradicts a theorem  of Matsushita \cite{matsushita1} which says that a quasiholomorphic map from a hyper-K\"ahler $2n$-fold to a manifold of smaller dimension has image of dimension $\leq n$.
 \end{proof}
  Inequality (\ref{ineqfibgon}) in Proposition \ref{theofibgen}  implies inequality (\ref{ineqfibgen}) since curves of genus $\leq 2$  have gonality $\leq2$.  We now prove the inequality (\ref{ineqfibgon}). Assume  that $X$ admits a fibration $\phi: X\dashrightarrow Y$ into hyperelliptic curves.  By Lemma \ref{leelliptic}, the fibers have genus  at least $2$. The smooth projective variety    $Y$ obviously  satisfies $h^0(Y,\Omega_Y^l)=0$ for $l>0$. Furthermore there exists a relative hyperelliptic involution
$\iota$ on $X$ such that any smooth model  $Q$ of $X/\iota$ is a fibration into $\mathbb{P}^1$ over $Y$. Thus  $Q$ satisfies
$h^0(Q,\Omega_Q^4)=0$. This contradicts Lemma \ref{proinvol}.
\end{proof}

Another easy result is the following

\begin{lemm} \label{lecovgenRCirr} Let $X$ be a hyper-K\"{a}hler manifold of dimension $\geq 4$. Then
\begin{eqnarray}\label{eqpourrcirrHK}  {\rm RCirr}(X)\leq 2{\rm fibgen}(X)-2.
\end{eqnarray}
\end{lemm}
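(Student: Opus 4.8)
The plan is to exhibit a generically finite rational map from $X$ to a rationally connected variety whose degree equals $2g-2$, where $g={\rm fibgen}(X)$. First I would choose a fibration $\phi:X\dashrightarrow B$ into curves whose general fibre $C$ is smooth of genus $g$, with $\dim B=2n-1$, and resolve its indeterminacies by $\tau:\widetilde X\to X$, $\widetilde\phi:\widetilde X\to B$, with $\widetilde X$ smooth and general fibre still $C$. Since $K_X=\mathcal O_X$, one has $K_{\widetilde X}=E$ for an effective divisor $E$ supported on the exceptional locus of $\tau$, and adjunction together with the triviality of $N_{C/\widetilde X}$ (the normal bundle of a general fibre) gives $K_C=\mathcal O_{\widetilde X}(E)|_C$. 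In particular $C$ meets $E$ in $\deg K_C=2g-2$ points, the case $2g-2=0$ being excluded by Lemma \ref{leelliptic}.

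The number $2g-2$ in the statement is exactly $\deg K_C$, which points to the canonical system of the fibres. A general pencil inside the base-point-free system $|K_C|$ defines a morphism $C\to\mathbb P^1$ of degree $2g-2$; carrying this out in family, I would take a general rank-two subsheaf of the Hodge bundle $\widetilde\phi_*\omega_{\widetilde X/B}$, whose restriction to a general fibre is such a pencil, to produce a rational map $\psi:\widetilde X\dashrightarrow P$ onto a $\mathbb P^1$-bundle $P\to B$, generically finite of degree $2g-2$ onto $P$. Combined with $\phi$, this is a dominant rational map $\Psi:X\dashrightarrow Y$ of degree $2g-2$, where $Y$ carries a $\mathbb P^1$-fibration over $B$.

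It remains to check that the target is rationally connected, and this is the crux. Since $Y$ is a $\mathbb P^1$-bundle over $B$, it suffices to prove that $B$ is rationally connected. Here I would use the divisor $E=K_{\widetilde X}$: because the general fibre $C$ meets $E$ in $2g-2$ points, the restriction $\widetilde\phi|_E:E\to B$ is dominant and generically finite of degree $2g-2$, and $E$ is ruled by the $\tau$-contracted rational curves. These curves are not contracted by $\widetilde\phi$ — otherwise $\phi$ would extend over the corresponding points of $X$, contradicting the choice of resolution — so their images sweep out $B$ by rational curves and $B$ is uniruled. The main obstacle is to upgrade this to rational connectedness of $B$, equivalently of $Y$; I expect this to follow from the hyper-K\"ahler structure, for instance by analysing the closed $2$-form that $\sigma_X$ induces on the base, in the spirit of Matsushita's theorem \cite{matsushita1}. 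Granting it, $\Psi:X\dashrightarrow Y$ realizes ${\rm RCirr}(X)\le 2g-2=2\,{\rm fibgen}(X)-2$.
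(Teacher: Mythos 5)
Your construction is the same as the paper's: take a general rank-two subsheaf of the direct image $\tilde f_*K_{\widetilde X/B}$, observe that its restriction to a general fibre is a pencil in $|K_C|$ of degree $2g-2$, and obtain a rational map over $B$ of degree $\leq 2g-2$ onto a $\mathbb{P}^1$-bundle over $B$, which is rationally connected as soon as $B$ is. (The paper only claims degree $\leq 2g-2$, which is all the inequality requires; the exact value is irrelevant.) The one step you could not close, and which you honestly flag with ``Granting it,'' is precisely the crux: the rational connectedness of the base $B$. This is a genuine gap. The paper does not prove it either; it invokes the theorem of Lin \cite{lin}, which asserts that the base of any fibration (dominant rational map with connected fibres onto a positive-dimensional base of smaller dimension) of a projective hyper-K\"ahler manifold is rationally connected. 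Without that input, or an equivalent, the lemma is not proved.

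Your proposed substitutes do not suffice. The argument via the exceptional divisor $E$ of $\tau$ (its ruling by $\tau$-contracted rational curves cannot be $\tilde\phi$-contracted, since a component of $E$ dominating $B$ has image of dimension $\leq 2n-2<2n-1=\dim B$ under $\tau$) does establish that $B$ is uniruled, but uniruledness is far weaker than rational connectedness, and the degree of $\Psi$ onto the resulting $Y$ is only controlled when $Y$ itself is rationally connected, since ${\rm RCirr}$ is defined via maps to rationally connected targets. As for ``analysing the $2$-form induced on the base in the spirit of Matsushita'': Matsushita's theorem concerns genuine morphisms, where the base automatically has dimension $n$ and the fibration is Lagrangian; here $\dim B=2n-1$ and the map is only rational, so that result does not apply, and what one actually gets from $\sigma_X$ is the contraction map $\sigma_b:T_{B,b}\to H^0(\widetilde X_b,\Omega_{\widetilde X_b})$ used elsewhere in the paper, which yields $h^0(B,\Omega_B^l)=0$ for $l>0$ but not, by itself, rational connectedness. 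Upgrading this to rational connectedness of $B$ is exactly the content of \cite{lin}.
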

\begin{proof} Let $f: X\dashrightarrow B$ be a fibration realizing the fibering genus, so that the fibers have genus $g={\rm fibgen}(X)$, and $\tilde{f}:\widetilde{X}\rightarrow B$ a resolution of the indeterminacies of $f$.  By Lemma \ref{leelliptic}, we know  that  $g\geq 2$. By \cite{lin}, the base $B$ is rationally connected. We now choose a rank $2$ subsheaf $\mathcal{F}$ of the sheaf
$R^0\tilde{f}_*K_{\widetilde{X}/B}$. The variety $\mathbb{P}(\mathcal{F})$ is generically a $\mathbb{P}^1$-bundle over $B$, hence is rationally connected, and there is a natural rational map
$$\psi:\widetilde{X}\dashrightarrow  \mathbb{P}(\mathcal{F})$$
over $B$, which is of degree $\leq 2g-2$.
\end{proof}
\begin{rema}{\rm  By Theorem \ref{theomir},  Lemma \ref{lecovgenRCirr}  implies that, for any hyper-K\"{a}hler fourfold  $X$, one has $ {\rm fibgen}(X)\geq \frac{n+3}{2}$, a statement to be compared with Theorem \ref{theocurves}.}
\end{rema}
We finally combine the  results above to prove
\begin{prop}\label{theocomp}   Let $X$ be a projective hyper-K\"{a}hler manifold of dimension $\geq4$.  Assume that  the fibering gonality of $X$ is $3$. Then one of the following possibilities holds:

(i)  ${\rm fibgen}(X)=3$ and ${\rm RCirr}(X)\leq 4$.

(ii)  ${\rm fibgen}(X)=4$ and ${\rm RCirr}(X)\leq 6$.

(iii)  ${\rm fibgen}(X)>4$ and ${\rm RCirr}(X)= 3$.
\end{prop}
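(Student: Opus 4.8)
The plan is to play off two bounds already available in this section. Lemma \ref{lecovgenRCirr} gives ${\rm RCirr}(X)\leq 2\,{\rm fibgen}(X)-2$, which is efficient when the fibering genus is small, while the trigonal pencil underlying ${\rm fibgon}(X)=3$ yields a direct degree $3$ construction that is efficient when the fibering genus is large. Two preliminary facts will be used repeatedly. First, ${\rm fibgen}(X)\geq 3$ by Proposition \ref{theofibgen} (recall $\dim X\geq 4$), so the three alternatives ${\rm fibgen}(X)=3$, ${\rm fibgen}(X)=4$ and ${\rm fibgen}(X)>4$ are exhaustive. Second, ${\rm RCirr}(X)\geq 3$: a degree $1$ map would force $X$ to be rationally connected, contradicting the existence of $\sigma_X$, and a degree $2$ map onto a rationally connected $Y$ is excluded by Lemma \ref{proinvol} since such a $Y$ has $h^0(Y,\Omega_Y^4)=0$; equivalently ${\rm RCirr}(X)\geq{\rm cohirr}(X)>2$.

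I fix a fibration $f:X\dashrightarrow B$ into curves of gonality $3$ realizing ${\rm fibgon}(X)=3$, and let $g_1$ be the genus of its general fiber. A curve of gonality $3$ is neither rational nor hyperelliptic, so $g_1\geq 3$; and since $f$ is one admissible fibration, ${\rm fibgen}(X)\leq g_1$. The base $B$ is rationally connected by \cite{lin}, the fibers having genus $\geq 2$. In the first two cases there is nothing more to do: if ${\rm fibgen}(X)=3$ then Lemma \ref{lecovgenRCirr} gives ${\rm RCirr}(X)\leq 4$, which is (i), and if ${\rm fibgen}(X)=4$ it gives ${\rm RCirr}(X)\leq 6$, which is (ii).

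The substance is the case ${\rm fibgen}(X)>4$, where I must replace the bound $\leq 2\,{\rm fibgen}(X)-2$ (now $\geq 8$) by the much stronger ${\rm RCirr}(X)\leq 3$. Here the value of the fibering genus enters only through $g_1$: since ${\rm fibgen}(X)\leq g_1$, we have $g_1\geq 5$. I then invoke the classical fact (Maroni) that a trigonal curve of genus $\geq 5$ carries a \emph{unique} $g^1_3$. Thus on each general fiber $C_b=f^{-1}(b)$ the degree $3$ pencil is canonically determined, hence monodromy invariant, and globalizes over $B$ with no base change: the relative $g^1_3$ defines a rational map $\psi:X\dashrightarrow P$ over $B$, restricting on $C_b$ to the degree $3$ map $C_b\to\mathbb{P}^1$, where $P\to B$ is a $\mathbb{P}^1$-fibration. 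Exactly as in Lemma \ref{lecovgenRCirr}, $P$ is rationally connected, being a $\mathbb{P}^1$-fibration over the rationally connected base $B$. Since $\dim P=\dim B+1=\dim X$, the map $\psi$ is generically finite of degree $3$ onto $P$, so ${\rm RCirr}(X)\leq 3$; with the lower bound this gives ${\rm RCirr}(X)=3$, proving (iii).

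The main obstacle, and the reason for the trichotomy, is exactly the failure of uniqueness of the $g^1_3$ in low genus: for $g_1=3$ (plane quartics) the trigonal pencils form a one-parameter family, and for $g_1=4$ there are generically two of them, so the relative pencil is only defined after a nontrivial cover of $B$, which need not remain rationally connected; the degree $3$ construction then breaks down and one can only fall back on the bound of Lemma \ref{lecovgenRCirr}. A minor point to handle carefully is that the line bundle computing the unique $g^1_3$ may fail to exist on the universal curve over $B$ because of a Brauer obstruction; this is harmless, since what the construction actually produces, and all that is needed, is the $\mathbb{P}^1$-fibration $P\to B$ together with the degree $3$ map $\psi$.
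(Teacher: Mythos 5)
Your proposal is correct and follows essentially the same route as the paper: dispose of the cases ${\rm fibgen}(X)=3,4$ via Lemma \ref{lecovgenRCirr}, and in the remaining case use the uniqueness of the $g^1_3$ on a non-hyperelliptic trigonal curve of genus $\geq 5$ to build a degree $3$ rational map over $B$ onto a $\mathbb{P}^1$-fibration, which is rationally connected since $B$ is. Your added remarks (the explicit lower bound ${\rm RCirr}(X)\geq 3$ from Lemma \ref{proinvol}, and the observation that only the $\mathbb{P}^1$-fibration, not a relative line bundle, is needed) are consistent with what the paper leaves implicit.
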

\begin{proof}  Let $\phi: X\dashrightarrow B$ be a fibration realizing the fibering gonality, so that the fibers of $f$ are trigonal curves. We know by Proposition \ref{theofibgen} that the genus of the fibers is at least $3$.  If the genus of the fibers is $3$ or $4$, then we apply Lemma \ref{lecovgenRCirr}  and get the inequalities  in (i) and (ii). If the genus of the fibers is $\geq 5$, we recall that a curve of genus $\geq 5$  which is trigonal admits a unique $g_3^1$, unless it is hyperelliptic, which is excluded by  Proposition   \ref{theofibgen}. It follows that there exists
a fibration $P\dashrightarrow B$ into $\mathbb{P}^1$'s and a rational map of degree $3$
$$ \psi: X\dashrightarrow P
$$
over $B$, which induces the trigonal map on the fibers of $f$. As $B$ is rationally connected, $P$ is rationally connected and  thus ${\rm RCirr}(X)= 3$.
\end{proof}
\subsection{Proof of Theorem \ref{theoabfib}}
This section is devoted to the proof of Theorem \ref{theoabfib}.  Let $X$ be a hyper-K\"{a}hler manifold of dimension $2n$ admitting a fibration $f:X\dashrightarrow B$ with general fiber birational to an abelian variety of dimension $g$.
Let $L$ be an ample line bundle on $X$. The  restriction to the general fiber $\widetilde{X}_t$ of a resolution
$\tilde{f}:\widetilde{X}\rightarrow B$ of the indeterminacies of $f$ has top-self-intersection $D:={\rm deg}\,L^g_{\mid \widetilde{X}_t}$. We will denote by $Z_t$ the $0$-cycle $L^g_{\mid \widetilde{X}_t}\in {\rm CH}_0( \widetilde{X}_t)$.

As $\widetilde{X}_t$ is birational to its Albanese variety, there is a rational action by translation
$$\widetilde{X}_t\times {\rm Alb}\,\widetilde{X}_t\dashrightarrow \widetilde{X}_t$$
$$ (x,u)\mapsto x+u $$
of ${\rm Alb}\,\widetilde{X}_t$ on $\widetilde{X}_t$.

For any integer $k$, we can construct a rational self-map
\begin{eqnarray} \label{eqselmap} \Psi_k: X\dashrightarrow X
\end{eqnarray}
preserving $f$, that is, acting fiberwise, and defined by
\begin{eqnarray} \label{eqforlmulapsik} \Psi_k(x)=x+k\,{\rm alb}_{\widetilde{X}_t}(Dx-Z_t),\,x\in \widetilde{X}_t.
\end{eqnarray}
\begin{lemm}\label{lepourdegfib} The degree of $\Psi_k$ is $(kD+1)^{2g}$.
\end{lemm}
\begin{proof} As $\Psi_k$ acts in  a fiberwise way with respect to $f$, its degree is equal to the degree of its  restriction to the fibers $\widetilde{X}_t$. By (\ref{eqforlmulapsik}), this restriction is  birationally conjugate to the multiplication by $kD+1$ on a $g$-dimensional abelian variety, which proves the result.
\end{proof}
We next have
\begin{lemm} \label{lepourmu}  Let $\sigma_X\in H^0(X,\Omega_X^2)$ be a generator. We have either $\Psi_k^*\sigma_X=(kD+1)\sigma_X$ or
$\Psi_k^*\sigma_X=(kD+1)^2\sigma_X$. In the first case, the fibers $\widetilde{X}_t$ are isotropic for $\sigma_X$.
\end{lemm}
\begin{proof} As $\Psi_k^*\sigma_X$ is a nonzero  holomorphic $2$-form on $X$, it must be a nonzero multiple of $\sigma_X$, so $\Psi_k^*\sigma_X=\mu \sigma_X$. As $\Psi_k$ acts in a fiberwise way, we have
\begin{eqnarray} \label{eqactpsion2} (\Psi_k^*\sigma_X)_{\mid \widetilde{X}_t}=\Psi_{k\mid \widetilde{X}_t}^*(\sigma_{X\mid \widetilde{X}_t}).\end{eqnarray}
As $\Psi_{k\mid \widetilde{X}_t}^*$ acts as multiplication by $(kD+1)^2$ on the transcendental  degree $2$ cohomology of
$\widetilde{X}_t$, (\ref{eqactpsion2}) implies that $\mu=(kD+1)^2$ if the fibers $\widetilde{X}_t$ are not isotropic for $\sigma_X$.
If the fibers $\widetilde{X}_t$ are  isotropic for $\sigma_X$, then $\sigma_{X}$, (or rather its pull-back  $\tau^*\sigma_{X}$ on a model  $\widetilde{X}$ where $f$ is well-defined) maps to an  element $\sigma_t$ of $H^0(\widetilde{X}_t,\Omega_{\widetilde{X}_t})\otimes \Omega_{B,t}$ which is nonzero for generic $t$ as otherwise $\tau^*\sigma_X$ would be everywhere degenerate.
As $\Psi_{k\mid \widetilde{X}_t}^*$ acts as multiplication by $kD+1$ on $1$-forms on
$\widetilde{X}_t$, we get in this case $\mu=kD+1$, using the fact
that
 the action of $\Psi_k^*$ on $\sigma_t$ is induced by the action of $\Psi_{k\mid \widetilde{X}_t}^*$ on the space $H^0(\widetilde{X}_t,\Omega_{\widetilde{X}_t})$.
\end{proof}
\begin{proof}[Proof of Theorem \ref{theoabfib}] We have $\Psi_k^*\sigma_X=\mu \sigma_X$, which we write in the form
\begin{eqnarray} \label{eqpulldes} \widetilde{\Psi}_k^*(\sigma_X)=\mu \tau^*\sigma_X,
\end{eqnarray}
where
$$\tau:\widetilde{X}\rightarrow X,\,\, \widetilde{\Psi}_k: \widetilde{X}\rightarrow X$$
is a desingularization of $\Psi_k: X\dashrightarrow X$.
As we are now working with morphisms in (\ref{eqpulldes}) and $\mu$ is  a real number by Lemma \ref{lepourmu}, it follows that
$$\widetilde{\Psi}_k^*(\sigma_X^n\wedge \overline{\sigma_X}^n)=\mu^{2n}\tau^*(\sigma_X^n\wedge \overline{\sigma_X}^n).$$
Integrating both sides over $ \widetilde{X}$, we get ${\rm deg}\, \widetilde{\Psi}_k={\rm deg}\,\Psi_k=\mu^{2n}$.
By Lemma \ref{lepourdegfib}, we deduce that
\begin{eqnarray}\label{eqrelaintdegre} \mu^{2n}=(kD+1)^{2g},\end{eqnarray}
while by Lemma \ref{lepourmu}, we have $\mu=kD+1$ or $\mu=(kD+1)^2$.
If $\mu=(kD+1)^2$, we get by (\ref{eqrelaintdegre}) that $4n=2g$ which contradicts the fact that $g<2n$. Hence $\mu=kD+1$, which implies by  (\ref{eqrelaintdegre}) that $n=g$. Furthermore, the fibers are isotropic  in this case by Lemma \ref{lepourmu}.
\end{proof}

If instead of a fibration we consider a covering by varieties birational to abelian varieties of dimension $g$, we can conclude that they are isotropic, assuming that the Mumford-Tate group of the Hodge structure on $H^2(X,\mathbb{Q})_{\rm tr}$ is maximal and
 \begin{eqnarray} \label{eqineqKS} g<2^{\llcorner\frac{ b_{2,{\rm tr}}-3}{2}\lrcorner},
 \end{eqnarray}
 by applying the result in \cite{charles} (or \cite{vGvoisin} if $b_2(X)_{\rm tr}\geq 5$). Indeed, these results say that
 the Hodge structure on $H^2( X,\mathbb{Q})_{\rm tr} $,  which is simple, cannot be realized as a Hodge substructure of $H^2(A)$ for any abelian variety of dimension $g$ if $g$ satisfies (\ref{eqineqKS}).
 Note that, without the inequality (\ref{eqineqKS}), one can construct coverings by abelian subvarieties which are not isotropic, as shows the example of the generalized Kummer $K_n(A)$ which is swept out by
 copies of surfaces birational to  $A$.

 Concerning the statement about the dimension, the following is an example of   a covering of a hyper-K\"{a}hler manifold of dimension $8$ with $\rho=1$ by varieties birational to abelian surfaces.
 \begin{example}{\rm  Let $Y$ be a cubic fourfold, and let $X$ be the LLSvS $8$-fold of $Y$ (see \cite{LLSvS}). This is a $8$-fold which is deformation equivalent to ${\rm K3}^{[4]}$ (see \cite{addl}). Furthermore, if $Y$ is very general, one has $\rho(X)=1$. Let $F_1(Y)$ be the variety of lines in $Y$. There exists a dominant rational map (see \cite{voisinisotropic})
 $$\psi: F_1(Y)\times F_1(Y)\dashrightarrow X.$$
 Next, the hyper-K\"{a}hler manifold $F_1(Y) $ is itself covered  by surfaces birational to abelian surfaces. Indeed, consider the surfaces of lines $\Sigma_{Y_H}$ contained in a hyperplane section $Y_H$ of $Y$.  It is a classical fact that, when $Y_H$ has one singular point $y$, $\Sigma_H$ is birational to the symmetric product $C_{y,H}^{(2)}$, where $C_{y,H}$ is the curve of lines contained in $Y_H$ and passing through $y$. This curve is of genus $4$ when $Y_H$ has one ordinary quadratic singularity at $y$ and is smooth otherwise. When $Y_H$ has two more singular points $y'$ and $y''$, the curve
 $C_{y,H}$ becomes singular at these points, and its geometric  genus decreases to $2$. It is clear that $F_1(Y)$ is covered by these surfaces $\Sigma_{y,H}$  birational to the symmetric product $C_{y,H}^{(2)}$ of a curve of genus $2$, hence to abelian surfaces, and using the morphism $\psi$, it follows that
 $X$ is covered by the surfaces $\psi(x\times \Sigma_{y,H})$, which are  birational to abelian surfaces.}
 \end{example}
\subsection{Proof of Theorem \ref{theocurves}}
Let $X$ be a hyper-K\"{a}hler $2n$-fold and \begin{eqnarray}
\label{eqnomdenomlenom} f:\widetilde{X}\rightarrow B,\,\tau:\widetilde{X}\rightarrow X,
\end{eqnarray}  where $\tau $ is birational and $\widetilde{X}$ is smooth projective,  be a fibration into curves of genus $g$ over a base $B$ of dimension $2n-1$.
We have $h^0(B,\Omega_B^l)=0$ for any $l>0$ and in fact $B$ is rationally connected (see \cite{lin}). Let $b\in B$ be a general point so that the fiber $\widetilde{X}_b$ is smooth.
Consider the  natural morphism
\begin{eqnarray}\label{eqmorphismfiberint} \sigma_b: T_{B,b}\rightarrow H^0(\widetilde{X}_b,\Omega_{\widetilde{X}_b})
\end{eqnarray}
induced  by the vector bundle  morphism
$$T_{\widetilde{X}_b}\rightarrow f^*\Omega_{B,b}$$
defined by contraction with   the holomorphic $2$-form $\tau^*\sigma_X$  along $\widetilde{X}_b$.
\begin{lemm}\label{lerangdusigmab} The morphism $\sigma_b$ has rank $\geq n$.
\end{lemm}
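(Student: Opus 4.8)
The plan is to reduce the statement to symplectic linear algebra along a general fibre and to read off the number $n$ from the fact that an isotropic subspace of a $2n$-dimensional symplectic space has dimension at most $n$. Write $\sigma=\tau^*\sigma_X$, $C=\widetilde{X}_b$ and $V=T_{\widetilde{X}/B}|_C=T_C$. Since $C$ is a curve, $V$ is $\sigma$-isotropic, so for $v\in V$ the form $\iota_v\sigma$ kills $V$ and hence lies in the conormal subbundle $f^*\Omega_{B,b}\subset\Omega_{\widetilde{X}}|_C$; this is exactly the bundle map $V\to f^*\Omega_{B,b}$ defining $\sigma_b$. Thus ${\rm rk}\,\sigma_b=\dim{\rm Im}\,\sigma_b$, and dualising, the claim ${\rm rk}\,\sigma_b\geq n$ is equivalent to $\dim\ker\sigma_b\leq n-1$, where $u\in\ker\sigma_b$ means that the $1$-form $\sigma_b(u)$ on $C$ vanishes identically, i.e. every lift $\tilde u$ of $u$ to $T_{\widetilde{X}}|_C$ satisfies $\sigma(\tilde u,V)=0$, i.e. $\tilde u\in V^\perp$.

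First I would set up the symplectic reduction along $C$. On the open part where $\sigma$ is nondegenerate (off $C\cap E$, with $E$ the exceptional divisor of $\tau$, where $\sigma=\tau^*\sigma_X$ is symplectic because $\sigma_X$ is), the orthogonal $V^\perp$ is a rank $2n-1$ subbundle containing $V$, and $\mathcal{K}:=V^\perp/V$ is a rank $2n-2$ bundle carrying the reduced, fibrewise nondegenerate, $\mathcal{O}$-valued symplectic form $\bar\sigma$. By the previous paragraph every $u\in\ker\sigma_b$ defines a section of $\mathcal{K}$, so $\ker\sigma_b$ embeds into $H^0(C,\mathcal{K})$, and $\bar\sigma$ restricts to a skew pairing $Q$ on $\ker\sigma_b$ which, being valued in $H^0(C,\mathcal{O}_C)=\mathbb{C}$, is a \emph{constant} antisymmetric form.

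The decisive input is compactness. In a local trivialisation of $f$ near $C$ write $\sigma=\alpha\wedge dz+\beta$ with $\alpha=\sum_i a_i\,dt_i$ and $\beta=\sum_{i<j}b_{ij}\,dt_i\wedge dt_j$; the coefficients $b_{ij}$ of the purely horizontal part are holomorphic and restrict to constants on the compact curve $C$, so $\beta|_C=\beta_0$ is a constant form on $T_{B,b}$ (this is just the invariant statement that $Q$ is constant). Since $(\alpha\wedge dz)^2=0$ one has $\sigma^n=n\,\alpha\wedge\beta^{n-1}\wedge dz$, and $\sigma^n\neq 0$ at a general point of $C$ forces $\beta_0^{\,n-1}\neq 0$, so $\beta_0$ has maximal rank $2n-2$ with one-dimensional kernel $\ell_0\subset T_{B,b}$, and forces $\alpha(z)(\ell_0)\neq 0$ for general $z$. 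The latter gives $\ell_0\not\subset\ker\sigma_b$, hence $\ker\sigma_b\cap\ell_0=0$, so $\ker\sigma_b$ injects into the genuine symplectic space $(T_{B,b}/\ell_0,\bar\beta_0)$ of dimension $2n-2$. Consequently, \emph{if} $\ker\sigma_b$ is isotropic for $\beta_0$ (equivalently $Q\equiv 0$), the Lagrangian bound yields $\dim\ker\sigma_b\leq n-1$, which is precisely ${\rm rk}\,\sigma_b\geq n$.

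The hard part is exactly proving $Q\equiv 0$, i.e. $\beta_0(u,u')=0$ for all $u,u'\in\ker\sigma_b$. I expect this to be the main obstacle: the closedness $d\sigma=0$ only gives, on $C$, the relation $\partial_z b_{ij}=\partial_{t_i}a_j-\partial_{t_j}a_i$, which controls the vertical variation of $\beta$ but leaves the constant $\beta_0(u,u')$ a priori free (it involves transverse derivatives of the $a_i$), so no purely pointwise argument on $C$ can suffice and the mechanism must be global. I would attack it by exhibiting $V_x\oplus\widetilde{\ker\sigma_b}\subset T_{\widetilde{X},x}$, the span of the fibre tangent together with the $V^\perp$-lifts of $\ker\sigma_b$, as a $\sigma$-isotropic subspace: the isotropy with $V$ is built in, and one must upgrade the vanishing $\sigma(\tilde u,V)\equiv 0$ along $C$ to $\sigma(\tilde u,\tilde u')=0$, using $d\sigma=0$ together with the fact that $\sigma$ is pulled back from a form symplectic on all of $X$ (so that the degeneracy of $\bar\sigma$ is effective and concentrated on $C\cap E$). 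This isotropy assertion, equivalent to saying that the $2n-1$ abelian differentials $a_i\,dz$ span an at least $n$-dimensional subspace of $H^0(C,\Omega_C)$, is where the real work lies.
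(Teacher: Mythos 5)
Your reduction of the statement to a symplectic linear-algebra bound is sound as far as it goes: $\ker\sigma_b$ does sit inside the rank $2n-2$ symplectic reduction $V^\perp/V$ at a general point of $C$, the induced skew pairing $Q$ on $\ker\sigma_b$ is a well-defined constant, and $Q\equiv 0$ would indeed force $\dim\ker\sigma_b\leq n-1$, i.e.\ ${\rm rank}\,\sigma_b\geq n$. But the proof is not complete: everything hinges on the isotropy claim $Q\equiv 0$, which you explicitly leave open, and you correctly observe that $d\sigma=0$ and pointwise nondegeneracy cannot deliver it. That is precisely the gap, and the mechanism that closes it in the paper is not differential-geometric but Chow-theoretic.

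The paper's idea is to transport the whole problem to the relative Jacobian $J_f\to B^0$. A universal divisor $\mathcal{P}$ produces a $2$-form $\sigma_J=\mathcal{P}^*\sigma_X$ on $J_f$ with ${\rm alb}_f^*\sigma_J=d(2g-2)\sigma_{\widetilde{X}}$, hence of generic rank $\geq 2n$; since the fibers $J_b$ are isotropic, $\sigma_J$ induces $\sigma_{J,b}:T_{B,b}\to H^0(J_b,\Omega_{J_b})$, which has the same rank as $\sigma_b$ because ${\rm alb}^*$ is an isomorphism on global $1$-forms. The key point — the analogue of your $Q\equiv 0$ — is then proved at the dense set of \emph{torsion points} $y\in J_b$: through such a point passes a torsion multisection $Z_y$, transverse to the fiber, which is a constant-cycle subvariety (torsion points are rationally equivalent up to torsion to the origin, and the $0$-section has constant class because $B$ is rationally connected), hence isotropic for $\sigma_J$ by Mumford's theorem on $0$-cycles. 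So the matrix of $\sigma_J$ at $y$ in the splitting $T_{J,y}=T_{Z_y,y}\oplus T_{J_b,y}$ is block anti-diagonal, giving ${\rm rank}\,\sigma_J=2\,{\rm rank}\,\sigma_{J,b}$ and therefore ${\rm rank}\,\sigma_b={\rm rank}\,\sigma_{J,b}\geq n$. Note that this isotropy is obtained on $J_f$ at torsion points, not on $\widetilde{X}$ itself, where no such constant-cycle multisections through points of $C$ are available; this is why your attack via an isotropic subspace of $T_{\widetilde{X},x}$ along $C$ stalls. Without some substitute for the Jacobian/torsion/Mumford input, the argument as written does not prove the lemma.
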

\begin{proof}  Over the open set $B^0$ of $B$ of regular values of $f$, we have the relative Albanese fibration (or Jacobian)
$J_f\rightarrow B^0$. The $(2,0)$-form $\sigma_X$  on $\widetilde{X}^0$ induces a $(2,0)$-form
$$\sigma_J:=\mathcal{P}^*\sigma_X$$
on $J_f$, where   $\mathcal{P}\subset J_f \times_{B^0}\widetilde{X}^0 $ is a universal divisor, satisfying the assumption that, for some nonzero integer $d$,
\begin{eqnarray} {\rm alb}_{\widetilde{X}_b }(\mathcal{P}_y)=d y
\end{eqnarray}
for any $y\in J_{f,b}={\rm Pic}^0(\widetilde{X}_b)$.

We also have the Albanese embedding  (up to isogeny)
$${\rm alb}_f:  \widetilde{X}^0\rightarrow J_f,$$
which maps $x\in   \widetilde{X}_t$ to ${\rm alb}_{\widetilde{X}_t}((2g-2)x-K_{\widetilde{X}_t})$.
We have
\begin{eqnarray}  \label{eqpourplusclair} {\rm alb}_f^*\sigma_J=d(2g-2)\sigma_{\widetilde{X}}\end{eqnarray} since by definition of $\sigma_J$,
${\rm alb}_f^*\sigma_J=\Gamma^*\sigma_X$, where   $\Gamma$ is  the self-correspondence
$$ x\mapsto d((2g-2)x-K_{\widetilde{X}_t}),\,\,t=f(x)$$
of $X$ over $B$,
which induces multiplication by $d(2g-2)$ on ${\rm CH}_0(X)_{{\rm hom}}$ because $B$ is rationally connected.
It follows from (\ref{eqpourplusclair}) that we have the inequality   of generic ranks

$$ {\rm rank}\,\sigma_J \geq {\rm rank}\,\sigma_{\widetilde{X}},$$
 that is,
 \begin{eqnarray} \label{eqpourrankJ}   {\rm rank}\,\sigma_J \geq2n.
 \end{eqnarray}
By construction, the $(2,0)$-form $\sigma_J$ vanishes identically on the fibers $J_b=J(\widetilde{X}_b)$ of $\pi:J\rightarrow B^0$, hence induces a contraction map $\sigma_{J,b}: T_{B^0,b}\rightarrow H^0(J_b,\Omega_{J_b})$,
and, by (\ref{eqpourplusclair}), we clearly have a commutative diagram
$$\begin{matrix} &T_{B^0,b}&\stackrel{\sigma_{J,b}}{\rightarrow}& {\rm alb}_{\widetilde{X}_b }^* \Omega_{J_b}\\
&\parallel & &a\downarrow\\
&T_{B^0,b}&\stackrel{\sigma_{b}}{\rightarrow} &\Omega_{\widetilde{X}_b}
\end{matrix}
$$
of morphisms of vector bundles on $\widetilde{X}_b$, where $a:=\frac{1}{d(2g-2)}{\rm alb}_{\widetilde{X}_b }^*$.
Taking  global sections, we get

\begin{eqnarray}\label{eqidentCJ} \begin{matrix} &T_{B^0,b}&\stackrel{\sigma_{J,b}}{\rightarrow}& H^0(J_b,\Omega_{J_b})\\
&\parallel & &a\downarrow \\
&T_{B^0,b}&\stackrel{\sigma_{b}}{\rightarrow} &H^0(\widetilde{X}_b,\Omega_{\widetilde{X}_b}),
\end{matrix}
\end{eqnarray}
where the second vertical map is an isomorphism.
We now have
\begin{claim}  We have the equality of  rank along $J_b$
\begin{eqnarray}\label{eqranfibjac} {\rm rank}\, \sigma_J=2{\rm rank}\,\sigma_{J,b}
\end{eqnarray}
\end{claim}
\begin{proof} The torsion points of $J_b$ are dense in $J_b$ for the Zariski or Euclidean topology, so it suffices to prove the equality at a torsion point $y\in J_b$. Through such a point, there is a torsion multisection $Z_y\subset J$, which is transverse to the fiber $J_b$. The $(2,0)$-form $\sigma_J$ vanishes on $Z_y$, because torsion points are rationally equivalent  (up to torsion)  in the fibers to the origin $0_b\in J_b$ and all points in  the $0$-section  are rationally equivalent in $\widetilde{X}$ since the base $B$ is rationally connected.  It follows that  the matrix of $\sigma_{J}$ at $y$ in a basis adapted to the decomposition $T_{J,y}=T_{Z_y,y}\oplus T_{J_b,y}$, where $T_{Z_y,y}\cong T_{B,b}$,  takes the block  form
$$\begin{pmatrix} 0  &M_{\sigma_{J,b}}\\
-^t{M_{\sigma_{J,b}}}& 0
\end{pmatrix}
$$
where  $M_{\sigma_{J,b}}$ is the matrix of  $\sigma_{J,b}$.
\end{proof}

Using the identifications   (\ref{eqidentCJ}),  Lemma \ref{lerangdusigmab}  follows from (\ref{eqpourrankJ})  and  (\ref{eqranfibjac}).
\end{proof}
\begin{coro} \label{corgn} One has $g\geq n$.
\end{coro}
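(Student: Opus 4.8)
The plan is to read the inequality off directly from Lemma \ref{lerangdusigmab}, which has already done all of the substantive work. The only additional input is the identification of the dimension of the target of the contraction morphism $\sigma_b$. Since $\widetilde{X}_b$ is a smooth projective curve of genus $g$, the space of global holomorphic $1$-forms has dimension
$$\dim H^0(\widetilde{X}_b,\Omega_{\widetilde{X}_b}) = g.$$

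First I would recall the elementary fact that, for any linear map of finite-dimensional vector spaces, the rank is bounded above by the dimension of the codomain. Applying this to the morphism $\sigma_b:T_{B,b}\rightarrow H^0(\widetilde{X}_b,\Omega_{\widetilde{X}_b})$ of (\ref{eqmorphismfiberint}) yields
$${\rm rank}\,\sigma_b \leq \dim H^0(\widetilde{X}_b,\Omega_{\widetilde{X}_b}) = g.$$
Combining this upper bound with the lower bound ${\rm rank}\,\sigma_b \geq n$ supplied by Lemma \ref{lerangdusigmab}, we obtain $n\leq g$, which is exactly the assertion of Corollary \ref{corgn}.

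I do not expect any genuine obstacle at this stage: the entire difficulty has been absorbed into the proof of Lemma \ref{lerangdusigmab}, where the rank estimate on $\sigma_b$ was obtained by transporting the nondegeneracy of $\sigma_X$ through the relative Albanese fibration $J_f\rightarrow B^0$ and exploiting the factor-of-two relation (\ref{eqranfibjac}) between the generic rank of $\sigma_J$ and its rank along a fiber $J_b$. Once that lemma is in hand, the corollary is a one-line dimension count, so the proof consists of nothing more than recording that the genus $g$ is the dimension of the space in which the image of $\sigma_b$ must sit.
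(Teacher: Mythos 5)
Your proof is correct and is essentially the paper's argument: the paper deduces the bound from the generic rank of $\sigma_{J,b}$ being $\geq n$ (via (\ref{eqpourrankJ}) and (\ref{eqranfibjac})), which by the identification (\ref{eqidentCJ}) is the same as invoking Lemma \ref{lerangdusigmab} for $\sigma_b$ and noting that its target $H^0(\widetilde{X}_b,\Omega_{\widetilde{X}_b})$ has dimension $g$.
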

\begin{proof} Indeed, this follows from Lemma \ref{lerangdusigmab} since the rank of $\sigma_b$ is non greater than $g={\rm dim}\,H^0(\widetilde{X}_b,\Omega_{\widetilde{X}_b})$.
\end{proof}
\begin{rema}{\rm  For $n=2$, this gives a third proof of Lemma \ref{leelliptic}.}
\end{rema}
Let $\overline{\nabla}_b:T_{B,b}\rightarrow {\rm  Hom}\,(H^0(\widetilde{X}_b,\Omega_{\widetilde{X}_b}),H^1(\widetilde{X}_b,\mathcal{O}_{\widetilde{X}_b}))$ be the infinitesimal variation of Hodge structure of the  family of curves (\ref{eqnomdenomlenom}) at $b$.
We will use the following  classical symmetry result due to Donagi and Markman  \cite{donagimarkman} (see also \cite{bai}).
\begin{lemm}\label{lesymvhs} The bilinear map $T_{B,b}\otimes T_{B,b}\rightarrow H^1(\widetilde{X}_b,\mathcal{O}_{\widetilde{X}_b})$,
$$ (u,v)\mapsto \overline{\nabla}_u(\sigma_b(v))$$
is symmetric in $u$ and $v$.
\end{lemm}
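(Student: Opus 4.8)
The plan is to derive the symmetry from the single fact that the holomorphic $2$-form $\tau^*\sigma_X$ is closed (holomorphic forms on a compact Kähler manifold are $d$-closed), and to package this as the vanishing of an $\mathcal{H}/F^1$-valued $2$-form on $B$. First I would recast the statement frame-freely. Writing $\mathcal{H}=R^1f_*\mathbb{C}\otimes\mathcal{O}_{B^0}$ for the Hodge bundle near $b$, with Gauss--Manin connection $\nabla$ and Hodge subbundle $F^1=f_*\Omega_{\widetilde{X}/B}\subset\mathcal{H}$, the map $\sigma_b$ is the value at $b$ of an $F^1$-valued holomorphic $1$-form $s$ on $B^0$, namely $s(v):=\iota_{\tilde v}(\tau^*\sigma_X)\mid_{\widetilde{X}_t}$; this is well defined independently of the lift $\tilde v$ of $v$ precisely because the fibres are curves, so $\tau^*\sigma_X$ restricts to $0$ on them. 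Since $\overline{\nabla}$ is by definition $\nabla$ followed by the projection $\mathcal{H}\to\mathcal{H}/F^1=H^1(\widetilde{X}_b,\mathcal{O}_{\widetilde{X}_b})$, the exterior covariant derivative $d^\nabla s\in\Omega^2_B\otimes\mathcal{H}$ satisfies, for $u,v\in T_{B,b}$,
\[
\bigl[d^\nabla s(u,v)\bigr]_{\mathcal{H}/F^1}=\overline{\nabla}_u(\sigma_b(v))-\overline{\nabla}_v(\sigma_b(u)),
\]
the bracket term $s([u,v])$ disappearing after projection because $s$ is $F^1$-valued. Thus the asserted symmetry is \emph{equivalent} to $d^\nabla s\in\Omega^2_B\otimes F^1$.

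Next I would identify $d^\nabla s$ with the fibrewise class of $d(\tau^*\sigma_X)$ by a local period computation. Fix a flat frame $\gamma_1,\dots,\gamma_{2g}$ of $\mathcal{H}$ (closed $1$-forms) and holomorphic relative $1$-forms $\omega_1,\dots,\omega_g$ with period matrix $\Pi_{\alpha i}(t)$, so that $\nabla_{\partial_l}\omega_\alpha=\sum_i\partial_{t_l}\Pi_{\alpha i}\,\gamma_i$ and $\overline{\nabla}_{\partial_l}\omega_\alpha$ is its projection to $H^{0,1}$. Writing the mixed and horizontal parts as $\tau^*\sigma_X=\sum_{k,\alpha}a_{k\alpha}(t)\,dt_k\wedge\omega_\alpha+\tfrac12\sum_{k,l}b_{kl}\,dt_k\wedge dt_l$, one gets $\sigma_b(\partial_k)=\sum_\alpha a_{k\alpha}\omega_\alpha$ and hence
\[
\overline{\nabla}_{\partial_l}(\sigma_b(\partial_k))=\Bigl[\sum_{\alpha,i}a_{k\alpha}\,\partial_{t_l}\Pi_{\alpha i}\,\gamma_i\Bigr]_{H^{0,1}}.
\]
Computing $d(\tau^*\sigma_X)$ and collecting the component of type $dt\wedge dt\wedge\gamma$ (two base directions, one fibre direction), its vanishing reads, for each $k,l$ and all $i$,
\[
\sum_\alpha\bigl(a_{k\alpha}\partial_{t_l}\Pi_{\alpha i}-a_{l\alpha}\partial_{t_k}\Pi_{\alpha i}\bigr)=\sum_\alpha\Bigl(\partial_{t_k}a_{l\alpha}-\partial_{t_l}a_{k\alpha}+\partial_{z_\alpha}b_{kl}\Bigr)\Pi_{\alpha i}.
\]
The right-hand side is exactly the $\gamma$-expansion of the element $\sum_\alpha(\partial_{t_k}a_{l\alpha}-\partial_{t_l}a_{k\alpha}+\partial_{z_\alpha}b_{kl})\,\omega_\alpha$ of $F^1$. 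Therefore the class $\sum_{\alpha,i}(a_{k\alpha}\partial_{t_l}\Pi_{\alpha i}-a_{l\alpha}\partial_{t_k}\Pi_{\alpha i})\gamma_i$, which represents $\overline{\nabla}_{\partial_l}(\sigma_b(\partial_k))-\overline{\nabla}_{\partial_k}(\sigma_b(\partial_l))$ before projection, lies in $F^1$ and so projects to $0$ in $H^{0,1}$. This is the claimed symmetry, and the only input used is the closedness $d(\tau^*\sigma_X)=0$.

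The delicate point—and the step I expect to be the real obstacle—is the passage between the Gauss--Manin derivative and the ambient exterior derivative. Because holomorphic relative forms are not flat, the clean identities $\omega_\alpha=\sum_i\Pi_{\alpha i}\gamma_i$ and $d\omega_\alpha=\sum_{l,i}\partial_{t_l}\Pi_{\alpha i}\,dt_l\wedge\gamma_i$ hold only up to fibrewise-exact and $F^1$-valued corrections; one must verify that these corrections, together with the $\partial_{z_\alpha}b_{kl}$ contribution coming from the purely horizontal part of $\sigma$, land in $F^1$ and are thus invisible after projecting to $\mathcal{H}/F^1$. (This is also precisely why one cannot conclude the much stronger false statement that each $\overline{\nabla}_{\partial_l}(\sigma_b(\partial_k))$ vanishes: the flat lift used to transport cycles is not holomorphic.) I would settle this either through the invariant reformulation of the first paragraph, observing that $d^\nabla s$ is the fibre class of $d(\tau^*\sigma_X)$ modulo $F^1$, or, to avoid the bookkeeping entirely, by directly invoking the cited symmetry theorem of Donagi--Markman (and Bai).
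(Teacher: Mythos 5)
The paper does not actually prove this lemma: it is stated as a ``classical symmetry result'' and outsourced entirely to the citations \cite{donagimarkman} and \cite{bai}. Your proposal supplies the standard argument behind that citation, and the approach is the right one: the symmetry is exactly the statement that the exterior covariant derivative of the $F^1$-valued $1$-form $s(v)=\iota_{\tilde v}(\tau^*\sigma_X)\mid_{\widetilde{X}_t}$ lands in $\Omega^2_B\otimes F^1$, and this follows from $d(\tau^*\sigma_X)=0$ together with the vanishing of $\sigma$ on the (one-dimensional) fibers. Your first paragraph --- the identification of $\overline{\nabla}_u(\sigma_b(v))-\overline{\nabla}_v(\sigma_b(u))$ with the image of $d^\nabla s(u,v)$ in $\mathcal{H}/F^1$, the bracket term dying because $s$ is $F^1$-valued --- is correct and is the essential content.

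The middle computation is the weakest part, and you are right to flag it. Two specific points. First, the decomposition $\tau^*\sigma_X=\sum a_{k\alpha}\,dt_k\wedge\omega_\alpha+\tfrac12\sum b_{kl}\,dt_k\wedge dt_l$ is not canonical: only the images in the graded pieces of the filtration $f^*\Omega^2_B\subset f^*\Omega_B\wedge\Omega_{\widetilde X}\subset\Omega^2_{\widetilde X}$ are well defined, so the coefficients $a_{k\alpha}$, $b_{kl}$ depend on a choice of lift of the relative forms $\omega_\alpha$, and the formula $\nabla_{\partial_l}\omega_\alpha=\sum_i\partial_{t_l}\Pi_{\alpha i}\gamma_i$ holds only at the level of cohomology classes, not of forms; this is exactly the source of the fibrewise-exact corrections you mention. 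Second, the term $\partial_{z_\alpha}b_{kl}$ does not literally parse for a curve fiber (there is one fiber coordinate, not $g$ of them), but it is in fact harmless for a cleaner reason than the one you give: the relevant contribution of the horizontal part to the ``two base, one fiber'' component of $d\sigma$ is the relative differential $d_{\widetilde{X}_t}b_{kl}$, and since $b_{kl}$ is holomorphic and the fiber is compact, $b_{kl}$ is constant along the fibers, so this term vanishes outright (more precisely, its class in $H^1(\widetilde{X}_b,\mathbb{C})$ is zero, being exact). With that observation, the invariant formulation of your first paragraph closes the argument without any period-matrix bookkeeping, and your fallback of simply invoking \cite{donagimarkman} is exactly what the paper itself does.
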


\begin{proof}[Proof of Theorem \ref{theocurves}] In the situation above, assume that  $n\geq 3$ and $g= n$ or $g=n+1$. Then $2n-1>n+1\geq g$. It follows that,  at a general point $b\in B$,  the morphism $\sigma_b$  has a nontrivial kernel $K_b\subset T_{B,b}$. Moreover,
by Lemma \ref{lerangdusigmab}, the morphism $\sigma_b$ is either surjective or   has for image  a hyperplane in $H^0(\widetilde{X}_b,K_{\widetilde{X}_b})$.

 \vspace{0.5cm}

 {\it Case $g=n$ or $n+1$ and  $\sigma_b$ is  surjective.}
We first prove
\begin{lemm} \label{leKb} The kernel $K_b$ of $\sigma_b$ is contained in  the kernel of the Kodaira-Spencer  map
$\rho_b:T_{B,b}\rightarrow H^1(\widetilde{X}_b,T_{\widetilde{X}_b})$.
\end{lemm}
\begin{proof}  We apply Lemma \ref{lesymvhs}. It thus follows that
for $u\in K_b$, and any $v\in T_{B,b}$, we have
\begin{eqnarray}\label{equationapressym}  \overline{\nabla}_u(\sigma_b(v))=  \overline{\nabla}_v(\sigma_b(u))=0.
\end{eqnarray}
As $\sigma_b$ is surjective, this implies that
$\overline{\nabla}_u:H^0(\widetilde{X}_b,K_{\widetilde{X}_b})\rightarrow H^1( \widetilde{X}_b,\mathcal{O}_{\widetilde{X}_b})$ is identically $0$. However, we know by Proposition  \ref{theofibgen} that the fibers $\widetilde{X}_b$ are not hyperelliptic, hence
the map $$H^1(\widetilde{X}_b,T_{\widetilde{X}_b})\rightarrow {\rm Hom}\,(H^0(\widetilde{X}_b,K_{\widetilde{X}_b}),H^1( \widetilde{X}_b,\mathcal{O}_{\widetilde{X}_b}))$$
is injective. Hence $\rho_b(u)=0$.
\end{proof}

Let $m:B\dashrightarrow \mathcal{M}_g$ be the moduli map, which to a general point $b\in B$ associates the isomorphism class of the curve
$\widetilde{X}_b$. By Lemma \ref{leKb}, the vector space $K_b$ is tangent to the fiber of $m$, hence it follows that
the map $m$ has positive dimensional fibers.
We thus have, after Stein factorization, a fibration $m': B\dashrightarrow B'$ with connected positive dimensional  fibers, having the property that, restricted to a general  fiber of $m'$, the fibration $f$ becomes isotrivial.
Denoting $f':\widetilde{X}\dashrightarrow B'$ the composition $m'\circ f$, we can assume by modifying $\widetilde{X}$ that $f'$ is a morphism, and   prove
\begin{lemm}\label{levgeemen}  Assume that $X$ is very general with fixed Picard number, that $b_2(X)_{\rm tr}\geq 5$ and that
$g< 2^{\llcorner\frac{ b_{2,{\rm tr}}-3}{2}\lrcorner}$. Then the general fiber of $f'$ is isotropic for $\tau^*\sigma_X$.
\end{lemm}
\begin{rema}\label{remaquivaservir} {\rm   Lemma \ref{levgeemen}  says in particular  that ${\rm Ker}\,\rho_b\subset K_b$, hence  ${\rm Ker}\,\rho_b= K_b$ by Lemma \ref{leKb}. In particular ${\rm dim}\,B'=g$.}
\end{rema}

\begin{proof}[Proof of Lemma \ref{levgeemen}] As the fibration $f$ is isotrivial after restriction to the general fiber $B_{b'}\subset B$ of $m'$, the fiber
$\widetilde{X}_{b'}:={f'}^{-1}(b')$ is rationally dominated by a product $C_{b'}\times \widetilde{B_{b'}}$ where  $\widetilde{B_{b'}}$ is a generically finite cover of
$B_{b'}$ and $C_{b'}$ is isomorphic to the fibers of the restricted family, so in particular has genus $g$.  The fact that $X$ is very general with fixed Picard group implies that the Mumford-Tate group of the Hodge structure on $H^2(X,\mathbb{Q})_{\rm tr}$ is the orthogonal group of the Beauville-Bogomolov form, and as proved in \cite{vGvoisin}, this implies that, if the composite map
$$H^2(X,\mathbb{Q})_{\rm tr}\rightarrow H^2(\widetilde{X}_{b'},\mathbb{Q})\rightarrow H^1(C_{b'},\mathbb{Q})\otimes H^1( \widetilde{B_{b'}},\mathbb{Q})$$
is nontrivial, then the Hodge structure on  $H^1(C_{b'},\mathbb{Q})$ contains a simple factor of the Kuga-Satake weight $1$ Hodge structure of  $H^2(X,\mathbb{Q})_{\rm tr}$, hence in particular $g\geq 2^{\llcorner\frac{ b_{2,{\rm tr}}-3}{2}\lrcorner}$. This is excluded by assumption and it follows that
the    form $\tau^*\sigma_{X\mid \widetilde{X}_{b'}}$  is either $0$, or the pull-back of a holomorphic $2$-form $\tau_{b'}$  on the fiber $B_{b'}$.
In the first case, the lemma is proved. In the second case, there is a nonzero locally constant holomorphic $2$-form
$\eta_{b'}\in H^{2,0}(B_{b'})$ whose pull-back to $\widetilde{X}_{b'}$ is $\tau^*\sigma_{X\mid \widetilde{X}_{b'}}$ and Deligne's global invariant cycle theorem then implies that there is a holomorphic $2$-form
$\eta$ on $B$  whose restriction to $B_{b'}$ is $\eta_{b'}$. This is impossible since otherwise $f^*\eta$ would provide a  nonzero holomorphic $2$-form on $X$ of rank $<2n$.
\end{proof}

Let  $B'_0$ be the Zariski open set of $B'$ over which the morphism $f':\widetilde{X}\rightarrow B'$ is smooth and let $A\rightarrow B'_0$ be the Albanese fibration of $f'$.  There is a rational map
\begin{eqnarray}\label{eqpsi} \psi:  \widetilde{X}\dashrightarrow A,
\end{eqnarray}
which is constructed as follows: we define
 $\psi$ as  the composition of the relative Abel or Albanese map up to isogeny
\begin{eqnarray}\label{eqabelmap}  {\rm alb}: \widetilde{X}  \dashrightarrow  J( \widetilde{X}/B),\end{eqnarray}
that we used previously and which to $c\in   \widetilde{X}_b$ associates ${\rm alb}_{\widetilde{X}_b}((2g-2)c-K_{\widetilde{X}_b})$,
and the natural rational map
\begin{eqnarray}\label{eqpsiab}\psi_{\rm ab}: J( \widetilde{X}/B)    \dashrightarrow A,
\end{eqnarray}
 inducing over a general  $b\in B$ the morphism

\begin{eqnarray}\label{eqsansmon} \psi_{{\rm ab},b}:J(\widetilde{X} _b)= {\rm Alb}(\widetilde{X}_b) \rightarrow {\rm Alb}(\widetilde{X} _{b'}),\,\,b'=m'(b)
\end{eqnarray}
of abelian varieties.
\begin{rema} \label{rematrestardive} {\rm The rational map $\psi$ might be different from any relative Albanese map for $f'$ constructed using a multisection of $f'$. More precisely, it may differ from it by translation by a rational section of $A$ over $B$, that is a rational map $B\dashrightarrow A$ over $B'$.}
\end{rema}
We have
\begin{lemm} \label{lemmecrucialpourfibgen}  The assumptions being as in Lemma \ref{levgeemen}, the  image $Y:={\rm Im}\,\psi\subset A$ has dimension ${\rm dim}\,B'+1$ and there is a  holomorphic  $2$-form $\eta$ on any  smooth projective birational model of $Y$, whose pull-back to $X$ under $\psi_Y:X\dashrightarrow Y$ is a nonzero multiple of $\sigma_X$.
\end{lemm}

\begin{proof} We first claim  that   for
general $b\in B$, with $m'(b)=b'$, the morphism of abelian varieties (\ref{eqsansmon}) is an isogeny on its image.
By  Lemma \ref{levgeemen}, the general fibers of $f':\widetilde{X}\rightarrow B'$ are isotropic for $\tau^*\sigma_X$, hence there is a  morphism
$$\sigma'_{b'}: T_{B',b'}\rightarrow H^0(\widetilde{X}_{b'},\Omega_{\widetilde{X}_{b'}})$$
of contraction with $\tau^*\sigma_X$.
By Remark
 \ref{remaquivaservir}, the morphism  (which is surjective by assumption  in Case  (i))
$$\sigma_{b}: T_{B,b}\rightarrow H^0(\widetilde{X}_{b},  \Omega_{\widetilde{X}_{b}}),$$
factors through an isomorphism
\begin{eqnarray}\label{eqoverlinesigmab} \overline{\sigma}_b: T_{B',b'}\rightarrow  H^0(\widetilde{X}_{b},  \Omega_{\widetilde{X}_{b}}).\end{eqnarray}
It is immediate to check that the following diagram is commutative
\begin{eqnarray}\label{eqnomdediagrammlenom} \begin{matrix}&  T_{B',b'}&\stackrel{\sigma'_{b'}}{\rightarrow} &  H^0(\widetilde{X}_{b'},\Omega_{\widetilde{X}_{b'}})\\
&\parallel &&  \psi_{\rm ab,b}^*\downarrow\\
&T_{B',b'}&\stackrel{ \overline{\sigma}_b}{\rightarrow}&  H^0(\widetilde{X}_{b},  \Omega_{\widetilde{X}_{b}}).
\end{matrix}
\end{eqnarray}
This implies that $ \psi_{\rm ab,b}^*$ is surjective, thus proving the claim.
It follows from the claim that the image ${\rm Im}\,\psi_{ab}$ is a family of abelian varieties  $J'\rightarrow B'$ over $B'$ which descends up to isogeny the family
$J(\widetilde{X}/B)\rightarrow B$. The image of the curve $\widetilde{X}_b$ in $J'_{b'}$ via $\psi$ obviously does not depend on the point $b$ in the fiber ${m'}^{-1}(b')\subset B$, since by construction of $\psi$, this is up to isogeny the curve $\widetilde{X}_b$ canonically embedded via  the Abel map (\ref{eqabelmap}). This  proves that ${\rm dim}\,Y={\rm dim}\,B'+1$.
It remains to construct a nonzero holomorphic   $2$-form  $\eta$ on $Y_{\rm reg}$ satisfying the desired property. Recall that $Y\subset A:={\rm Alb}(\widetilde{X}/B')$. Using a multisection of $\widetilde{X}\dashrightarrow B'$, we get a relative Albanese map (or rather a multiple depending on the degree of the multisection)
$a_{\widetilde{X}/B'}:\widetilde{X}\dashrightarrow A$. Furthermore,  by Lemma \ref{levgeemen}, using  \cite{bai} or \cite{voisintriangle},   the relative Albanese variety $A$ admits  a holomorphic $2$-form $\sigma_A$ (which extends to a smooth projective compactification of $A$) with the property that
\begin{eqnarray} \label{eqtardivepourpullback}  a_{\widetilde{X}/B'}^*\sigma_A=\sigma_{\widetilde{X}}.
\end{eqnarray}
Let $\eta:= \sigma_{A\mid Y_{\rm reg}}$. This $(2,0)$-form clearly extends to a smooth projective compactification of $Y_{\rm reg}$. It remains to prove that the pull-back  $\psi_Y^*\eta$, which by definition of $\eta$ equals  $\psi^*\sigma_A$, is nonzero on $\widetilde{X}$. This follows in fact directly from (\ref{eqtardivepourpullback}), by observing that the $(2,0)$-forms
$$\psi^*\sigma_A,\,\, a_{\widetilde{X}/B'}^*\sigma_A$$
on $\widetilde{X}$ differ by a holomorphic $(2,0)$-form on $B$, (see Remark \ref{rematrestardive}) and $B$ has no nonzero holomorphic $2$-form.
\end{proof}
Lemma \ref{lemmecrucialpourfibgen} provides us with a contradiction since ${\rm dim}\,Y={\rm dim}\,B'+1=g+1\leq n+2 <2n$ because $n\geq 3$ and thus the pull-back of $\eta$ to $\widetilde{X}$  provides a  holomorphic $2$-form on $\widetilde{X}$ which is everywhere degenerate. This case is thus excluded.

 \vspace{0.5cm}

 {\it Case  $g=n+1$ and  $\sigma_b$ has for image a hyperplane in $H^0(\widetilde{X}_b,\Omega_{\widetilde{X}_b})$.} We use the same notation as before, that is $K_b\subset T_{B,b}$ is the kernel of the contraction map $\sigma_b:T_{B,b}\rightarrow H^0(\widetilde{X}_b,\Omega_{\widetilde{X}_b})$. In this case, we first have the following variant of  Lemma \ref{leKb}:
 \begin{lemm}\label{leKbprime} At  a general point $b\in B$, the rank of the map
 $$\rho_b:K_b\rightarrow H^1(\widetilde{X}_{b},  T_{\widetilde{X}_{b}})$$
 is at most $1$.
 \end{lemm}
 \begin{proof} By the same arguments as in the proof of Lemma \ref{leKb}, we find that
 $\rho_b(K_b)$ is orthogonal with respect to Serre duality to $H^0(\widetilde{X}_{b},  K_{\widetilde{X}_{b}})\cdot {\rm Im}\,\sigma_b\subset H^0(\widetilde{X}_{b},  2K_{\widetilde{X}_{b}})$.
 As we know by Proposition  \ref{theofibgen} that the general fiber  $\widetilde{X}_{b}$ is not hyperelliptic, and by assumption ${\rm Im}\,\sigma_b\subset H^0(\widetilde{X}_{b},  K_{\widetilde{X}_{b}})$ is a hyperplane, $H^0(\widetilde{X}_{b},  K_{\widetilde{X}_{b}})\cdot {\rm Im}\,\sigma_b$ has codimension at most $1$ in $ H^0(\widetilde{X}_{b},  2K_{\widetilde{X}_{b}})$, which proves the lemma.
 \end{proof}
 As ${\rm rank} \, \sigma_b=n$, we have ${\rm dim}\,K_b=n-1\geq 2$, and it follows from Lemma \ref{leKbprime} that
 ${\rm Ker}\,\rho_b\not=0$, that is, the moduli map has positive dimensional general fiber.
 The rest of the proof works as in the previous case, except that the morphism of abelian varieties $\psi_{{\rm ab},b}$ of (\ref{eqsansmon} ) can now have a $1$-dimensional kernel, so that only a $g-1$-dimensional quotient of the Jacobian fibration descends to $B'$.
\end{proof}
We conclude this section with the proof of Corollary \ref{corotardif}.
\begin{proof}[Proof of Corollary \ref{corotardif}] Let $X$ be a very general hyper-K\"{a}hler $2n$-fold with $n\geq 3$ and  $b_2(X)_{\rm tr}\geq 9$. By Theorem \ref{theofibgen}, one has ${\rm fibgen}(X)\geq 5$ and by Theorem \ref{theomir}, one has ${\rm cohirr}(X)\geq 4$, hence a fortiori ${\rm RCirr}(X)\geq 4$. It thus follows from Proposition \ref{theocomp} that ${\rm fibgon}(X)\geq 4$.
\end{proof}
\section{Measure of irrationality \label{secmir} }
We do not know if (maybe under some assumptions on $b_{2,{\rm tr}}(X)$) the cohomological irrationality of a hyper-K\"{a}hler fourfold $X$ is at least $4$, which would greatly simplify the proof of Theorem \ref{propLMnef3}, but we can prove a weaker statement that will be used in the next section.
  \begin{prop}\label{prodeg3} Let $X$ be a hyper-K\"ahler fourfold such that  any big divisor on $X$ is ample. Then there exists no quasi-finite morphism
$f: X\rightarrow Y$ of degree $3$, where $Y$ is projective, normal and $-K_{Y_{\rm reg}}$ is  the restriction to the regular locus $Y_{\rm reg}$ of  a big line bundle on $Y$.
\end{prop}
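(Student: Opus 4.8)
The plan is to reduce to the degree-$3$ analysis already carried out for Theorem \ref{theomir}, the genuinely new input being a positivity statement squeezed out of the two hypotheses. Since $X$ is projective and $f$ is a quasi-finite morphism, it is proper and quasi-finite, hence finite; as $\deg f=3$ it is surjective with $\dim Y=2n$. On the preimage of $Y_{\rm reg}$ the Riemann--Hurwitz formula reads $K_X=f^\ast K_{Y_{\rm reg}}+R$ with $R\geq 0$ the ramification divisor, and since $K_X\cong\mathcal{O}_X$ this gives $f^\ast(-K_{Y_{\rm reg}})=R$ as divisor classes on $X$. Because $-K_{Y_{\rm reg}}$ is big and $f$ is finite surjective, $R$ is a big divisor on $X$, hence ample by hypothesis. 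As $f^\ast(-K_Y)$ is then ample and ampleness descends along the finite surjective $f$, this upgrades the hypothesis to the statement that $-K_{Y}$ is ample, so that $Y$ is of Fano type.

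First I would dispose of $2$-forms. A holomorphic $2$-form on $Y_{\rm reg}$ pulls back to a multiple $c\,\sigma_X$ of the symplectic form; as $f$ is generically \'etale, $f^\ast$ is injective on forms, so if the form is nonzero then $c\neq 0$ and the form is generically nondegenerate, whence its $n$-th power is a nonzero section of $K_{Y_{\rm reg}}$, contradicting the ampleness of $-K_{Y_{\rm reg}}$. Thus $H^0(Y_{\rm reg},\Omega^2)=0$. Exactly as in the proof of Theorem \ref{theomir}, I then form the triple-point correspondence: let $X'_1$ be a component of the closure of $\widetilde{X}\times_Y\widetilde{X}\setminus\Delta_{\widetilde X}$ dominating $\widetilde{X}$, with projections $p_1,p_2$ and third map $p_3$; Mumford's theorem together with the vanishing $H^0(Y_{\rm reg},\Omega^2)=0$ yields $p_1^\ast\sigma_X+p_2^\ast\sigma_X+p_3^\ast\sigma_X=0$. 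To run the rest of the argument I need the companion $4$-form relation $p_1^\ast\sigma_X^2+p_2^\ast\sigma_X^2+p_3^\ast\sigma_X^2=0$, which by Mumford follows once the holomorphic $4$-forms on $Y$ vanish. Given both relations, the same manipulation as in Theorem \ref{theomir} rewrites them as $(p_1^\ast\sigma_X-j\,p_2^\ast\sigma_X)(p_1^\ast\sigma_X-j^2\,p_2^\ast\sigma_X)=0$, and \cite[Lemma 2.4]{voisintriangle} forces these two $2$-forms to factor through a common $4$-dimensional quotient; since $\dim X'_1=2n\geq 8$ they then acquire a common nontrivial kernel, which lies in the kernel of $p_1^\ast\sigma_X$ and contradicts the generic nondegeneracy of $p_1^\ast\sigma_X$, as $p_1$ is dominant.

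The hard part is the $4$-form vanishing on $Y$. Unlike the $2$-form case, a holomorphic $4$-form pulls back to a multiple of $\sigma_X^2$ and cannot be turned into a top form, so the clean contradiction above is unavailable. The natural route is to exploit that $-K_{Y_{\rm reg}}$ is now ample to conclude that $Y$ is rationally connected, whence all holomorphic forms vanish and one could even cite Theorem \ref{theomir} wholesale; the obstruction is that $Y$ is only assumed normal, so one cannot pass to a resolution while keeping the anticanonical positive, and the rational connectedness (equivalently the form vanishing) must be established compatibly with the singularities of $Y$. This is presumably where the ampleness of the ramification $R$, rather than mere bigness, and the stronger hypothesis $n\geq 4$ are consumed: the extra dimension provides the room needed in the final rank count once the argument is run directly on $X$ and the correspondence $X'_1$, rather than imported verbatim from Theorem \ref{theomir}. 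Once the $4$-form relation is secured the proof closes as above.
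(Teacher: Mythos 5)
There is a genuine gap, and you have in fact located it yourself: your argument reduces everything to the vanishing $H^0(\widetilde{Y},\Omega^4_{\widetilde{Y}})=0$ on a resolution of $Y$, which you need in order to get the $4$-form identity $\sum_i p_i^*\sigma_X^2=0$ on the correspondence $X'_1$, and you do not prove it. The hypotheses do not obviously supply it: bigness (or even ampleness) of $-K_{Y_{\rm reg}}$ on the regular locus of a merely \emph{normal} $Y$ does not make $Y$ of Fano type or rationally connected without control of the singularities (one would need something like klt), and $h^{4,0}$ of a resolution is precisely the kind of invariant that can survive such hypotheses. Your closing suggestion that the assumptions $n\geq 4$ and ``big $\Rightarrow$ ample'' are ``presumably consumed'' in a rank count on $X'_1$ does not materialize: the correspondence argument of Theorem \ref{theomir} genuinely requires the $4$-form relation, and nothing in your setup produces it. The preliminary steps you do carry out (finiteness of $f$, $R\in|f^*(-K_{Y_{\rm reg}})|$ big hence ample, $H^0(Y_{\rm reg},\Omega^2)=0$) are correct and agree with the paper, though the further ``upgrade'' to ``$-K_Y$ ample, $Y$ of Fano type'' is not justified, since $-K_Y$ is only a line bundle on the non-proper open set $Y_{\rm reg}$.

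The paper's proof avoids the correspondence $X'_1$ and the $4$-form identity altogether; the missing idea is the \emph{residual divisor}. One sets $R'=f^{-1}(f(R))-2R$, a nonempty effective divisor, and observes that the set-theoretic intersection $R\cap R'$ over $Y_{\rm reg}$ is exactly the locus of points $x$ where the whole length-$3$ fiber of $f$ is concentrated at $x$, so that $3x=f^*(f(x))$ in ${\rm CH}_0(X)$ there. Mumford's theorem, combined only with the degree-$2$ vanishing on $Y$ that you did establish, then shows that $R\cap R'$ is isotropic for $\sigma_X$ (Lemma \ref{lenomdelemmesansnom}); a separate argument (Lemma \ref{le2pour34}) handles the part of $\overline{R}\cap\overline{R'}$ lying over $Y_{\rm sing}$. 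The contradiction is then geometric rather than linear-algebraic: since $\overline{R}$ is big, hence ample by hypothesis (this is where that assumption is consumed), its restriction to the $(2n-1)$-dimensional divisor $\overline{R'}$ is big, so $\overline{R}\cap\overline{R'}$ has dimension $2n-2$; but $\sigma_X$ restricted to $\overline{R'}$ has generic rank $\geq 2n-2$, so an isotropic subvariety of $\overline{R'}$ has dimension at most about $n$, which is impossible for $n$ large. To repair your proof you would either have to prove the $4$-form vanishing on a resolution of $Y$ (which seems out of reach from the stated hypotheses) or switch to this ramification-divisor mechanism.
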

Here,  by a ``big line bundle on $Y$'', we mean  the sum of an ample line bundle and an effective divisor. Our assumptions on $K_{Y_{\rm reg}}$ thus mean that  there exists an ample line bundle $\mathcal{L}$ on $Y$, and an effective divisor $E$ in $Y$, such that
\begin{eqnarray}\label{eqpourbigK} K_{Y_{\rm reg}}=\mathcal{L}^{-1}(-E)_{\mid Y_{\rm reg}}.\end{eqnarray}
\begin{proof} We first observe that, under our assumptions, $h^0(\widetilde{Y},K_{\widetilde{Y}})=0$ for any desingularization $\widetilde{Y}$ of $Y$. Indeed, using (\ref{eqpourbigK}), we get that
$$H^0(Y_{\rm reg},K_{Y_{\rm reg}})\subset H^0(Y_{\rm reg},\mathcal{L}^{-1}_{\mid Y_{\rm reg}})$$
and the right hand side is zero since $Y$ is normal and $\mathcal{L}$ is ample.
 This fact also implies that  $h^{2,0}(\widetilde{Y})=0$, since otherwise the $2$-form on $X$ would be pulled-back from $Y$, hence also its $(4,0)$-form, while we know that $h^{4,0}(\widetilde{Y})=0$.

The ramification divisor $R$ of $f$, which is well-defined on $f^{-1}(Y_{\rm reg})$, belongs to the linear system $|f^*(-K_Y)|$, hence is big on $f^{-1}(Y_{\rm reg})$.  There is a second effective divisor $R'$  in $f^{-1}(Y_{\rm reg})\subset X$, namely $f^{-1}(f(R))-2R$. The divisor $R'$ is not empty since its image in $Y_{\rm reg}$ is equal to $f(R)$.
We now prove
\begin{lemm} \label{lenomdelemmesansnom} The locus defined as the  intersection
\begin{eqnarray}\label{eqinterRRprime} S:= R\cap R'
\end{eqnarray}
in $X^0:=X - f^{-1}(Y_{\rm sing})$ is isotropic for the $2$-form $\sigma_X$.
\end{lemm}
\begin{proof} We observe that, due to the fact that the map $f$ is quasi-finite (hence finite over the smooth locus of $Y$), the locus (\ref{eqinterRRprime})
consists of points $x\in X$ such that the length of the fiber $f^{-1}(f(x))$ at $x$ is at least $3$, hence equal to  $3$ since the degree of $f$ is 3. For all these points $x\in X^0$, the class $3x\in{\rm CH}_0(X)$ is thus the inverse image of a $0$-cycle of $Y$. It follows from Mumford's theorem \cite{mumford} that the restriction
of $\sigma_X$ to any  desingularization of $S$ is the pull-back of a $2$-form defined on $Y_{\rm reg}$, and in fact on a desingularization $\widetilde{Y}$ of $Y$. However, as mentioned above,  we have $h^{2,0}(\widetilde{Y})=0$.
\end{proof}
In order to finish the proof, we have to see what happens along the singular locus $Y_{\rm sing}$ of $Y$.
\begin{lemm} \label{le2pour34} Any $2$-dimensional component of $f^{-1}(Y_{\rm sing})$ is also Lagrangian for $\sigma_X$.
\end{lemm}
\begin{proof} Let $\Sigma_2$ be the union of the $2$-dimensional components of $\Sigma:={\rm Sing}\,Y$ and let $y\in \Sigma_2$ be a general point. We claim that $f^{-1}(y)$ consists of a single point.  By flattening, after blowing-up $Y$ to a smooth variety $\widetilde{Y}$, the exceptional fiber of $\tau:\widetilde{Y}\rightarrow Y$ has connected fiber over $y$, because $Y$ is normal, and it parameterizes schemes $z$ of finite length with support the fiber $f^{-1}(y)$. The local multiplicities of $z$ at any of its points $x\in f^{-1}(\{y\})$ cannot be $1$ as otherwise the local degree of $f$ near the point $x$ would be $1$ and, by normality, $f$ would be a local isomorphism, contradicting the fact that $Y$ is singular at $y$. This implies that $f^{-1}(\{y\})$ contains at most one point since the sum of the local degrees over $Y_{\rm reg}$ is $3$.
The argument above shows that points of $\widetilde{Y}$ over $y\in Y_{\rm sing}$  parameterize  subschemes of length $3$ supported at a single point $x\in X$ over $y$. It thus follows again by Mumford's theorem \cite{mumford} that the restriction of $\sigma_X$ to $f^{-1}(\Sigma_2)$ is the restriction of a $2$-form on $\widetilde{Y}$, hence $0$ by the  argument already used.
\end{proof}
We now consider  the Zariski closures $\overline{R}$  of $R$ and $\overline{R'}$  of $R'$.
\begin{coro} The intersection $\overline{R}\cap \overline{R'}$ is isotropic for $\sigma_X$. In particular, it has dimension $2$ since there is no  divisor in $X$ which is isotropic for $\sigma_X$.
\end{coro}
Indeed, this is true away from $f^{-1}(Y_{\rm sing})$ by Lemma \ref{lenomdelemmesansnom}  and over $Y_{\rm sing}$ by Lemma \ref{le2pour34}.

The contradiction now comes from the fact that $\overline{R'}$ is a non-empty divisor in $X$, so that the restriction $\overline{\sigma}$ of   $\sigma_{X}$ to $\overline{R'}$, or rather its pull-back to a desingularization $\tau:\widetilde{\overline{R'}}\rightarrow \overline{R'}$ of $\overline{R'}$, is nonzero. As the ramification  divisor $\overline{R}$ is a big divisor on $X$ since it is  linearly equivalent to $f^*(-K_Y)$ over $Y_{\rm reg}$, it is an ample divisor by our assumptions, hence  its pull-back ${\tau'}^*{\overline{R}}$ to $\widetilde{\overline{R'}}$ is big, where $\tau':\widetilde{\overline{R'}}\rightarrow X$ is the composition of $\tau$ and the inclusion map $\overline{R'}\hookrightarrow X$. This contradicts the fact that the surface
$\overline{R}\cap \overline{R'}\subset \overline{R'}$, hence also its inverse image in $\widetilde{\overline{R'}}$,  is isotropic for the $2$-form $\overline{\sigma}$ on $\widetilde{\overline{R'}}$.
\end{proof}

\section{Rational maps from  hyper-K\"{a}hler fourfolds: a variant of a theorem of O'Grady}
In the paper \cite{ogradyccm}, O'Grady proves the following result.
\begin{theo}\label{theoogadry} Let $X$ be a hyper-K\"{a}hler fourfold which is numerically equivalent to ${\rm K3}^{[2]}$. Assume $\rho(X)=1$ and ${\rm Pic}(X)$ is generated by one positive line bundle $H$ with $q_X(H)=2$, or equivalently, $H^4=12$. Then the rational map
$$\phi_H:X\dashrightarrow\mathbb{P}^5$$
is either birational to  a hypersurface of degree $12\geq d\geq 6$, or of degree $2$ over a hypersurface of degree $ 6$ whose desingularization has $p_g\not=0$.
\end{theo}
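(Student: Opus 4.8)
The plan is to study the generically finite map $\phi_H$ through the numerical invariants of its image together with the behaviour of the holomorphic forms of $X$. First I would record the numerology. Since $X$ is numerically ${\rm K3}^{[2]}$ with $q_X(H)=2$, Riemann--Roch gives $\chi(H)=\binom{q_X(H)/2+3}{2}=6$, so $h^0(X,H)=6$ and $\phi_H$ lands in $\mathbb{P}^5$. As $q_X(H)>0$ the class $H$ is big, and since $\rho(X)=1$ it is ample. Choosing a smooth model $\tilde\phi:\widetilde{X}\rightarrow\mathbb{P}^5$ with $\tilde\phi^*\mathcal{O}(1)=\tau^*H-E$, $E\geq 0$ exceptional (there is no fixed divisor, as $H$ generates ${\rm Pic}\,X$), bigness of $H$ makes $\phi_H$ generically finite, so its image $W\subset\mathbb{P}^5$ is an irreducible non-degenerate hypersurface of some degree $d\geq 2$. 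Writing $e=\deg(\phi_H:X\dashrightarrow W)$, nefness of $\tau^*H$ and effectivity of $E$ give the basic constraint $e\,d=(\tilde\phi^*\mathcal{O}(1))^4\leq (\tau^*H)^4=H^4=12$.

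Next I would feed in the Hodge theory. For a desingularization $\widetilde{W}$ of $W$ the pullback $\tilde\phi^*\colon H^0(\widetilde{W},\Omega^p_{\widetilde{W}})\hookrightarrow H^0(X,\Omega^p_X)$ is injective, and $H^{2,0}(X)=\mathbb{C}\sigma_X$, $H^{4,0}(X)=\mathbb{C}\sigma_X^2$, $H^{1,0}(X)=H^{3,0}(X)=0$. Hence $p_g(\widetilde{W})=h^{4,0}(\widetilde{W})\in\{0,1\}$ and $h^{2,0}(\widetilde{W})\in\{0,1\}$. The two geometric facts that drive the analysis are: a hypersurface $W\subset\mathbb{P}^5$ of degree $d\leq 5$ is Fano, hence rationally connected with $p_g(\widetilde{W})=0$ and $h^{2,0}(\widetilde{W})=0$; and a smooth sextic fourfold has trivial canonical but $h^{2,0}=0$ by Lefschetz.

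Using these inputs I would dispose of the small cases. If $e=2$, then Lemma~\ref{proinvol} applied to $\widetilde{W}$ forces $p_g(\widetilde{W})\neq 0$, which rules out $d\leq 5$; combined with $d\leq 6$ this pins down $d=6$, giving the second alternative. If $\phi_H$ is birational ($e=1$), then $\widetilde{W}$ is birational to $X$, so $p_g(\widetilde{W})=h^{2,0}(\widetilde{W})=1$; the vanishing $p_g=0$ for $d\leq 5$ and $h^{2,0}=0$ for a sextic force $d\geq 7$, while $d=e\,d\leq 12$ gives the range $7\leq d\leq 12$ of the first alternative. The delicate point here is a \emph{singular} sextic, whose resolution could a priori acquire a $2$-form from its singularities; I would exclude it by comparing canonical bundles on the common model $\widetilde{X}$, where $K_X=\mathcal{O}_X$, against $K_{\widetilde{W}}$ built from $K_W=\mathcal{O}_W$, and by checking that a holomorphic-symplectic birational model polarised by $H$ with $H^4=12$ and $q_X(H)=2$ is numerically incompatible with a sextic.

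The main obstacle is excluding $e\geq 3$. Here $e\geq 3$ forces $d\leq 4$, so $W$ is a quadric, cubic or quartic fourfold, all rationally connected with $p_g(\widetilde{W})=0$ and $h^{2,0}(\widetilde{W})=0$, and one must show that $X$ admits no such degree-$\geq 3$ cover. This is exactly the fourfold analogue of ${\rm cohirr}(X)\geq 4$, which Theorem~\ref{theomir} proves only in dimension $\geq 6$: there the triple-point relation $p_1^*\sigma_{\widetilde X}+p_2^*\sigma_{\widetilde X}+p_3^*\sigma_{\widetilde X}=0$, together with its squared version and the common-kernel argument, needs $\dim\geq 6$ to close, and in dimension $4$ it does not conclude. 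The plan is therefore to exploit the very special geometry of the low-degree target: with $W$ non-degenerate of degree $\leq 4$ in $\mathbb{P}^5$, I would combine the degree-$3$ form relations with the explicit equations of $W$ and the structure of its triple-point locus to manufacture either an everywhere-degenerate $4$-form on $\widetilde{X}$ or a direct contradiction with the nondegeneracy of $p_1^*\sigma_{\widetilde X}$. I expect this final step to be the crux, precisely because the clean higher-dimensional mechanism of Theorem~\ref{theomir} is unavailable for fourfolds.
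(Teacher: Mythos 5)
This theorem is quoted by the paper from O'Grady's article \cite{ogradyccm} and is not reproved here, so the only internal guide is the paper's remark that the statement ``is equivalent to exclude the possibilities where the image of $\phi_H$ is of dimension $<4$ or a hypersurface of degree $<6$.'' Measured against that, your proposal has a genuine gap right at the start: you assert that bigness (or ampleness) of $H$ makes $\phi_H$ generically finite, hence that the image $W$ is a hypersurface. This is false as stated --- positivity of $H$ controls the maps given by $|mH|$ for $m\gg0$, not the map given by the complete linear system $|H|$ itself, whose image can perfectly well be a surface or a threefold (compare a $(1,2)$-polarization on an abelian surface, whose complete linear system maps to $\mathbb{P}^1$). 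Excluding the low-dimensional images is a substantive part of O'Grady's argument, and it is precisely where the paper's own analogue, Theorem \ref{propLMnef3}, spends most of its effort (Lemmas \ref{leeasyA}, \ref{leeasyB}, \ref{le63fold} and Propositions \ref{procubquatre}, \ref{proquatrecinqtrois} all deal with images of dimension $2$ or $3$). Your entire case division by $(e,d)$ with $ed\le 12$ only begins once this is settled, so the omission is not cosmetic.

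The second gap you identify yourself but do not close: when $W$ is a hypersurface of degree $d\le 4$ one must rule out a generically finite map of degree $e\ge 3$ onto a rationally connected fourfold, i.e.\ essentially ${\rm cohirr}(X)\ge 4$ in dimension $4$. The paper explicitly leaves this open (Question \ref{question1}); Theorem \ref{theomir} needs dimension $\ge 6$ because the common-kernel argument from \cite{voisintriangle} produces a kernel only when ${\rm dim}\,X'_1>4$, and the only dimension-$4$ substitute available in the paper, Proposition \ref{prodeg3}, requires $n\ge 4$ and a quasi-finite \emph{morphism} onto a normal target with big anticanonical bundle, so it does not apply here either. Your plan to ``combine the degree-$3$ form relations with the explicit equations of $W$'' is a hope, not an argument; O'Grady's actual proof handles this by different means (a detailed study of the fibers and of the surfaces cut out by pencils in $|H|$, using that $X$ is numerically $K3^{[2]}$), and nothing in your outline substitutes for it. The parts you do carry out --- the intersection-theoretic bound $ed\le 12$, the use of Lemma \ref{proinvol} to force $p_g(\widetilde W)\neq 0$ when $e=2$, and the Hodge-theoretic exclusion of $d\le 6$ when $e=1$ (modulo the singular sextic, which you also leave unresolved) --- are correct and do match the expected skeleton, but the two missing pieces are the heart of the theorem.
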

Here, ``numerically equivalent" means that the lattice $(H^2(X,\mathbb{Z}),q_X)$ is isomorphic to the corresponding lattice for ${\rm K3}^{[2]}$. As explained in {\it loc. cit.}, Theorem \ref{theoogadry} is equivalent to exclude the possibilities where the image of $\phi_H$ is of dimension $<4$ or a hypersurface of degree $<6$. In these two cases, the image would be  rationally connected by \cite{lin}.

In this section, we are going to extend Theorem \ref{theoogadry} to the situation studied in \cite{DHMV}. The hyper-K\"ahler fourfold $X$ is only supposed to be very general with $\rho(X)=2$ and to admit two line bundles $L$ and $M$ satisfying
\begin{eqnarray}
\label{eqpourinfibdr} L^4=M^4=0,\,\,L^2M^2=2,
\end{eqnarray}
 which gives   $(L+M)^4=12$ since this implies by \cite{bogo}
 \begin{eqnarray}
\label{eqpourinfibdrsupinfo} L^3M=LM^3=0.
\end{eqnarray}
It is proved in \cite[Theorem 1.7]{DHMV} that such an $X$ has $b_2(X)=23$ and the same Chern numbers and Fujiki constant as ${\rm K3}^{[2]}$, and that the Riemann-Roch polynomial $\chi(X,kL+k'M)$ coincides with the similarly defined  polynomial on ${\rm K3}^{[2]}$ equiped with line bundles $L$, $M$ satisfying (\ref{eqpourinfibdr}); however we do not know  a priori  that $X$ is numerically equivalent  to ${\rm K3}^{[2]}$. The following result is in fact needed in order to prove that $X$ as above is deformation equivalent to ${\rm K3}^{[2]}$
 so that, a posteriori, $X$ is numerically equivalent  to ${\rm K3}^{[2]}$ (see \cite[Theorem 1.5]{DHMV}).
\begin{theo} \label{propLMnef3} Assume that $X,\,L,\,M$ are as above, with $L,\,M$ nef and  $X$ very general with $\rho(X)=2$, and that  $h^0(X,L)=0$.  Then $\phi_{L+M}:X\dashrightarrow\mathbb{P}^5$ does not have rationally connected  image.
\end{theo}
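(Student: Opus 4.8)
The plan is to suppose, for contradiction, that $W:=\overline{\mathrm{Im}\,\phi_{L+M}}\subset\mathbb{P}^5$ is rationally connected, and to eliminate every possibility for the pair $(\dim W,\deg\phi_{L+M})$. First I would fix the numerical data. Since $X$ has the Betti numbers, Chern numbers and Fujiki constant of $\mathrm{K3}^{[2]}$, the Fujiki relation together with $(L+M)^4=12$ forces $q(L+M)=2$, and Riemann--Roch combined with Kodaira vanishing (legitimate because $L+M$ is nef and big) gives $h^0(X,L+M)=6$. Thus $\phi_{L+M}$ is the map of the complete linear system and $W$ is nondegenerate in $\mathbb{P}^5$, so $\deg W\geq 6-\dim W$; moreover, if $\phi_{L+M}$ is generically finite of degree $e$ then $e\cdot\deg W\leq (L+M)^4=12$. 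As $X$ carries a symplectic form it is not rationally connected, so $e\neq 1$; and since a rationally connected $Y$ has $h^0(Y,\Omega_Y^4)=0$, Lemma~\ref{proinvol} rules out $e=2$. Hence in the generically finite case $e\geq 3$ and $\deg W\leq 4$.

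\textbf{Reduction to two cases.} The bulk of the elementary work is the case analysis on $d:=\dim W$. Heuristically, if $L+M$ were base-point free the equality $L+M=\phi_{L+M}^*\mathcal{O}(1)$ would give $(L+M)^{d+1}=0$, forcing $d=4$; since $L+M$ need not be base-point free, the cases $d\leq 3$ must instead be excluded by a direct analysis of the contracted fibers, using that $L+M$ restricts trivially to them, the nefness and isotropy of $L,M$ recorded in (\ref{eqpourinfibdr})--(\ref{eqpourinfibdrsupinfo}), and the irreducibility of a general member of $|L+M|$ guaranteed by $h^0(X,L)=0$. A curve or surface base is excluded by combining the nondegeneracy bound with this fiber analysis; the case $d=3$ is a fibration into curves whose genus is $\geq 3$ by Proposition~\ref{theofibgen} and is pinned to $3$ by (\ref{eqpourinfibdr}). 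On the generically finite side one eliminates $e=4,6$ (i.e. $\deg W=3,2$) by the same circle of ideas, and is left with exactly the two possibilities anticipated in the introduction: either \emph{(a)} $X$ is fibered into curves of genus $3$, or \emph{(b)} $\phi_{L+M}$ is generically $3:1$ onto a quartic fourfold $W\subset\mathbb{P}^5$.

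\textbf{Case (a).} To exclude the genus-$3$ fibration I would feed it into the infinitesimal machinery of Section~\ref{sectmaintheo}. Corollary~\ref{corgn} gives genus $\geq n=2$, and the isotropy analysis behind Theorem~\ref{theoabfib} and Lemma~\ref{leelliptic} shows that the contraction $2$-form has rank $\geq n$; for a fourfold whose period is very general with $\rho=2$, a genus-$3$ curve fibration then yields an everywhere-degenerate holomorphic $2$-form on $X$ unless the fibration is forced to be compatible with the decomposition dictated by $L,M$, and the intersection numbers (\ref{eqpourinfibdr}) make the latter impossible. (This is precisely the step that a negative answer to Question~\ref{question2} would render immediate.)

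\textbf{Case (b), the main obstacle.} The degree-$3$ situation is the genuinely hard one. The two natural tools are unavailable: Proposition~\ref{prodeg3} requires $n\geq 4$, and Theorem~\ref{theomir} requires dimension $\geq 6$, which is exactly the gap a positive answer to Question~\ref{question1} would close. I would nonetheless follow the strategy of those results: after desingularizing, form the correspondence parametrizing the fibers $\{x_1,x_2,x_3\}$ of $\phi_{L+M}$ and apply Mumford's theorem to obtain relations between the forms $p_i^*\sigma_X$ and between $p_i^*\sigma_X^2$, together with the ramification picture of Proposition~\ref{prodeg3}: the ramification divisor $R$ lies in $|f^*(-K)|$ and, as $W$ is rationally connected, $-K_{W_{\mathrm{reg}}}$ is big, whence $R$ is big and therefore ample (for a very general $\rho=2$ manifold the nef cone is spanned by the two isotropic rays $L,M$, so every big divisor is ample). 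One would then show that the residual divisor $R'$ meets $R$ in a surface which is isotropic for $\sigma_X$ by Mumford's theorem, contradicting the ampleness of $R$ once one checks via (\ref{eqpourinfibdr}) that no divisor in $X$ is $\sigma_X$-isotropic. The crux is that the clean common-kernel dimension count available in dimension $\geq 6$ (the argument of Theorem~\ref{theomir}) collapses for a fourfold, so this final contradiction must be extracted by hand from the explicit configuration (\ref{eqpourinfibdr}), in the spirit of the degree $\geq 6$ and $p_g\neq 0$ dichotomy of O'Grady's Theorem~\ref{theoogadry}.
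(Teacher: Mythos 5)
Your setup (nondegeneracy of $Y$ in $\mathbb{P}^5$, $e\cdot\deg Y\leq 12$, exclusion of $e=2$ via Lemma~\ref{proinvol}) matches the paper, but the two places where you locate the real work are both wrong, and in each the argument you sketch would not close. First, the threefold-image case: you propose to kill the genus-$3$ fibration with the infinitesimal/isotropy machinery of Section~\ref{sectmaintheo}, but that machinery (Theorem~\ref{theocurves}) requires $n\geq 3$ and gives nothing for a fourfold; indeed the paper explicitly leaves Question~\ref{question2} open, exhibits a fourfold fibered in genus-$4$ curves, and says outright that it \emph{cannot} exclude genus-$3$ fibrations abstractly. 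Instead it disposes of the threefold images of degree $3,4,5,6$ one by one with concrete projective geometry: cones and scrolls are excluded because they force reducible members of $|L+M|$ against $h^0(X,L)=0$ and Lemma~\ref{nolmsur2crucial} (a key numerical lemma on the matrix $(\langle\eta,l^2\rangle,\langle\eta,lm\rangle,\langle\eta,m^2\rangle)$ of an effective surface class that is entirely absent from your sketch); degree $4$ and $5$ use the Swinnerton--Dyer and Ionescu classifications; degree $6$ uses Claim~\ref{claimpourle63} to pin the fiber class to $\frac12(L^2M+M^2L)$, then shows there is a \emph{single} indeterminacy point and derives a contradiction from the ranks of the evaluation maps of $|L+M|$ at that point.

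Second, you misidentify the hard generically finite case. The $3\!:\!1$ map onto a quartic fourfold is in fact the \emph{easy} one: there $\phi_{L+M}$ is forced to be a finite morphism and Proposition~\ref{prodeg3} (the ramification/Mumford argument you describe) applies directly — this is Lemma~\ref{leeasyC}. The genuinely hard generically finite cases are the quadric and cubic hypersurface images, which your reduction silently drops: for $\deg Y=2$ the degree of $\phi$ need not be $3$ (only $e\cdot\deg Y\leq 12$, so $e\leq 6$), and the paper's exclusion (Lemma~\ref{leeasyD}) goes through multiplication maps, the Hopf lemma, the factorization of $\phi_{2L+M}$ through $Y$, and a count of conditions imposed on quadrics by the contracted surface $W$; for $\deg Y=3$ one first rules out $e=4$ (Lemma~\ref{le3sur3dim4}, via divisibility of $\deg(2L+M)_{|C_X}=18$) and then spends Proposition~\ref{procubquatre} producing a line bundle $\mathcal{L}$ of degree $5$ on plane sections, a residual $\mathbb{P}^3\subset Y$, and finally a nonzero section of $L$, contradicting $h^0(X,L)=0$. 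Your own text concedes that the final contradiction in your case (b) ``must be extracted by hand,'' which is precisely the content you have not supplied; as written, the proposal establishes only the easy reductions and leaves the theorem unproved.
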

\begin{rema}\label{remapourirredpsefnef} {\rm The conditions $L,\,M$ nef and $h^0(X,L)=0$ imply  that  no divisor in $|L+M|$ is reducible. Indeed, if $L$ and $M$ are nef, any effective divisor $D$ on $X$ satisfies $q(L,D)\geq 0$ and $q(M,D)\geq 0$ so that $D$ is a combination with integral nonnegative coefficients of $L$ and $M$.}
\end{rema}

Note that \cite[Proposition 6.3]{DHMV} proves that $\phi_{L+M}:X\dashrightarrow\mathbb{P}^5$  has rationally connected  image, so that, in fact, an $X$ as above, with $L$ and $M$ nef satisfying (\ref{eqpourinfibdr}) and $h^0(X,L)=0$ does not exist.

The proof of  Theorem  \ref{propLMnef3} will be done in several steps. Although the statement is very similar to Theorem \ref{theoogadry},   we cannot use the strategy of O'Grady, who proves first  that any surface which is the complete intersection of two members of $|L+M|$ is reduced and irreducible, a statement that is  a priori not true in our situation.  Nevertheless, using the fact that $(L+M)^4=12$, and under the assumption   that no divisor in $|L+M|$ is reducible,   a number of his arguments go through in our situation where $\rho(X)=2$ and $L,\,M$ are nef.

The following lemma will be very much used in the proof. We denote $l=c_1(L)\in{\rm Hdg}^2(X,\mathbb{Z}),\,m=c_1(M)\in{\rm Hdg}^2(X,\mathbb{Z})$.
\begin{lemm}\label{nolmsur2crucial} Assume  $X$ is as above, very general with $\rho(X)=2$. Then there is no surface $\Sigma\subset X$, such that the class  $(l+m)^2-3[\Sigma]\in {\rm Hdg}^4(X,\mathbb{Z})$ is pseudoeffective.
\end{lemm}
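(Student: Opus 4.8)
The plan is to turn the statement into an intersection‑theoretic problem on $H^4(X,\mathbb{Q})$ and to test the class $(l+m)^2-3[\Sigma]$ against the few codimension‑two nef classes one can write down explicitly. First I would set up the weight‑four cohomology. Since $X$ has the Betti numbers, Chern numbers and Fujiki constant of a fourfold of $K3^{[2]}$‑type, the cup‑product map $\mathrm{Sym}^2H^2(X,\mathbb{Q})\to H^4(X,\mathbb{Q})$ is an isomorphism (a dimension count: $\binom{24}{2}=276=b_4$), so the weight‑four Hodge structure is $\mathrm{Sym}^2$ of the weight‑two one. As $X$ is very general with $\rho(X)=2$, the Mumford–Tate group of $H^2(X,\mathbb{Q})_{\mathrm{tr}}$ is the full orthogonal group of $q_X$, and the invariant theory of this group forces a one‑dimensional space of Hodge classes in $\mathrm{Sym}^2H^2_{\mathrm{tr}}$, spanned by the coform of $q_X$. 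Hence $\mathrm{Hdg}^4(X,\mathbb{Q})=\langle l^2,\,lm,\,m^2,\,\gamma\rangle$, where $\gamma\in H^4$ is the Beauville–Bogomolov class, characterised by $\int_X\gamma\cdot\alpha\cdot\beta=(b_2+2)\,q_X(\alpha,\beta)=25\,q_X(\alpha,\beta)$ for $\alpha,\beta\in H^2$. The Fujiki relation $\int_X\xi^4=3\,q_X(\xi)^2$ together with (\ref{eqpourinfibdr})–(\ref{eqpourinfibdrsupinfo}) gives $q_X(l,l)=q_X(m,m)=0$ and $q_X(l,m)=1$; in particular I record $(l+m)^2\cdot l^2=(l+m)^2\cdot m^2=2$, $\ (l+m)^2\cdot lm=4$ and $(l+m)^2\cdot\gamma=50$.

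Next I would run the reduction. Suppose for contradiction that $P:=(l+m)^2-3[\Sigma]$ is pseudoeffective. A product of two nef divisors is a nef codimension‑two class, because its restriction to any surface is a product of two nef divisors there; thus $l^2,\,m^2,\,lm$ pair non‑negatively with every effective surface, hence with $P$. From $P\cdot l^2\geq0$ and $(l+m)^2\cdot l^2=2$ I get $[\Sigma]\cdot l^2\leq\tfrac23$, so $[\Sigma]\cdot l^2=0$ since it is a non‑negative integer; likewise $[\Sigma]\cdot m^2=0$, and $P\cdot lm\geq0$ forces $[\Sigma]\cdot lm\in\{0,1\}$. Expanding $[\Sigma]=a\,l^2+b\,lm+c\,m^2+d\,\gamma$ and computing with the Fujiki numbers gives $[\Sigma]\cdot m^2=2a$ and $[\Sigma]\cdot l^2=2c$, whence $a=c=0$ and $[\Sigma]=b\,lm+d\,\gamma$. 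Note that had $[\Sigma]\cdot l^2\geq1$ (resp. $[\Sigma]\cdot m^2\geq1$, resp. $[\Sigma]\cdot lm\geq2$) held, then $l^2$ (resp. $m^2$, resp. $lm$) would already separate $P$ from the pseudoeffective cone, since $P\cdot l^2=2-3[\Sigma]\cdot l^2<0$ and so on. Thus the entire content of the lemma is concentrated in the case $[\Sigma]\cdot l^2=[\Sigma]\cdot m^2=0$, $[\Sigma]\cdot lm\in\{0,1\}$.

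The hard part will be precisely this surviving case, where the four generators no longer suffice: one computes that pairing $P$ with each of $l^2,m^2,lm,\gamma$ is consistent with pseudoeffectivity (for instance $P\cdot\gamma=50-3[\Sigma]\cdot\gamma$, and a priori $[\Sigma]\cdot\gamma$ is bounded only by itself), so no element of $\langle l^2,lm,m^2,\gamma\rangle$ separates $P$, and a \emph{transcendental} nef class would pair to zero with the Hodge class $P$ and is therefore useless. The contradiction must consequently come from showing that the surviving candidate classes $[\Sigma]=b\,lm+d\,\gamma$ are not classes of effective surfaces. I would close this with two further ingredients: first, the non‑negativity $\int_\Sigma c_2(X)\geq0$ — equivalently the nefness of $\gamma$ in codimension two, using that $\gamma$ is a positive multiple of $c_2(X)$, whose Fujiki‑type proportionality is the same as for $K3^{[2]}$ by the hypothesis on Chern numbers — which together with $P\cdot\gamma\geq0$ confines $[\Sigma]\cdot\gamma$ to a finite range and hence $(b,d)$ to finitely many values; second, the integrality of $[\Sigma]$ in $\mathrm{Hdg}^4(X,\mathbb{Z})$ combined with the geometric fact that $L|_\Sigma$ and $M|_\Sigma$ are nef classes of self‑intersection zero, so that $L+M$ contracts $\Sigma$ onto a curve or point.

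I expect the genuine obstacle to be the very last exclusion: proving that a very general $X$ of this type carries no effective surface $\Sigma$ whose restrictions of $L$ and $M$ are isotropic and whose class has $[\Sigma]\cdot\gamma$ in the allowed range. This is the only point where the intersection numbers alone do not decide the question, and it should require the specific geometry imposed by (\ref{eqpourinfibdr}) — for example, analysing the contraction of $\Sigma$ by $\phi_{L+M}$ and the possible fibration structures compatible with $\rho(X)=2$ — rather than formal lattice theory. Everything preceding this step is routine once the description of $\mathrm{Hdg}^4(X,\mathbb{Q})$ and the Fujiki numbers are in place.
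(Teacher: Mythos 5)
Your reduction is sound and coincides with the first half of the paper's argument: the $\mathrm{Sym}^2$ description of $H^4(X,\mathbb{Q})$, the Mumford--Tate argument pinning down $\mathrm{Hdg}^4(X,\mathbb{Q})$, and the pairing of $P=(l+m)^2-3[\Sigma]$ and $[\Sigma]$ against the nef classes $l^2,\ m^2,\ lm$ to force $[\Sigma]\cdot l^2=[\Sigma]\cdot m^2=0$ and $[\Sigma]\cdot lm\in\{0,1\}$ (in fact $=1$, since an effective surface class pairs positively with $(l+m)^2$ by ampleness of $L+M$; you do not record this). The gap is in how you propose to finish. Your first closing ingredient --- the nonnegativity of $\int_\Sigma c_2(X)$, i.e.\ the ``nefness of $\gamma$ in codimension two'' --- is false as a general principle and cannot be deduced from Chern numbers or the Fujiki constant: for a Lagrangian plane $\mathbb{P}^2$ in a fourfold of $K3^{[2]}$ type one has $N_{\mathbb{P}^2/X}\cong\Omega_{\mathbb{P}^2}$, hence $c_2(T_X)_{\mid\mathbb{P}^2}=-3h^2$ and $\int_{\mathbb{P}^2}c_2(X)=-3<0$. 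So $\gamma$ is not a legitimate test class against pseudoeffectivity, and the ``finite range'' for $[\Sigma]\cdot\gamma$ does not follow. Your second ingredient is explicitly left open (``I expect the genuine obstacle to be the very last exclusion\dots''), so the proof is not complete.

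The paper closes the argument by purely numerical means, and the two tools it uses are exactly the ones your sketch is missing. First, integrality: writing $f=[\Sigma]=\frac12 lm+\eta(c_2(X)-15lm)$ with $\eta\in\mathbb{Q}$ (the class $c_2(X)-15lm$ spans the kernel of the pairing against $l^2,\ lm,\ m^2$), the self-intersection $f^2=\frac12+378\eta^2$ must be an integer, which forces $6\eta\in\mathbb{Z}$. Second, the correct nef codimension-two test classes are not $\gamma$ but the squares $\alpha^2$ of nef isotropic classes $\alpha$ on the boundary of the K\"ahler cone (characterized by $q_X(l,\alpha)\geq0$, $q_X(m,\alpha)\geq0$ via \cite{huybrechts}); pairing the pseudoeffective class $e=(l+m)^2-3f$ with these and using the Fujiki relations yields $45\eta+\frac12\geq-2$, i.e.\ $\eta\geq-\frac1{18}$, while the effectivity of $f$ itself forces $\eta<0$ via an inequality taken from \cite{DHMV}. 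The three constraints $6\eta\in\mathbb{Z}$, $\eta<0$, $\eta\geq-\frac1{18}$ are incompatible. So the ``last exclusion'' you leave to the geometry of the contraction $\phi_{L+M}$ is in fact decided by integrality plus the boundary of the K\"ahler cone.
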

\begin{proof} We argue as in the proof of \cite[Claim 6.2]{DHMV}. Any integral cohomology class
$\eta\in H^4(X,\mathbb{Z})$ has an associated matrix
\begin{eqnarray}\label{eqmatrice} M_\eta=
\begin{pmatrix}
a & b \\
b & c
\end{pmatrix},
\end{eqnarray}
with $a=\langle \eta,l^2\rangle_X$, $b= \langle \eta,ml\rangle_X$, $c=\langle \eta,m^2\rangle_X$.
If $\eta$ is the class of a surface in $X$, this matrix is nonzero since $L+M$ is ample and  has nonnegative coefficients since $L$ and $M$ are nef.
We follow some computations and arguments  of  \cite{ogradyccm}, which we can do as we are in a very similar numerical situation, namely our $X$ has by \cite[Theorem 1.7]{DHMV} the same Chern numbers, Betti numbers and Fujiki constant as ${\rm Hilb}^2(K3)$. As $b_2(X)=23$, one has an isomorphism given by cup-product (see \cite{bogo}, \cite{guan})
$${\rm Sym}^2H^2(X,\mathbb{Q})\cong H^4(X,\mathbb{Q}),$$ which induces a decomposition
\begin{eqnarray}\label{eqdecompH4}H^4(X,\mathbb{Q})={\rm Sym}^2H^2(X,\mathbb{Q})_{\rm tr}\oplus H^2(X,\mathbb{Q})_{\rm tr}\otimes {\rm NS}(X)_{\mathbb{Q}}\oplus {\rm Sym}^2{\rm NS}(X)_{\mathbb{Q}}.
\end{eqnarray}
As $X$ is very general, the Mumford-Tate group of the Hodge structure on $H^2(X,\mathbb{Q})_{\rm tr}$ is the orthogonal group of the Beauville-Bogomolov form $q_X$, so that the only Hodge classes in ${\rm Sym}^2H^2(X,\mathbb{Q})_{\rm tr}\subset H^4(X,\mathbb{Q})$ are multiples of the class $c$ inducing the Beauville-Bogomolov form.
By (\ref{eqpourinfibdr}) and (\ref{eqpourinfibdrsupinfo}), the classes $l^2$ and $m^2$ satisfy
\begin{eqnarray}\label{eqdematl2m2}M_{l^2}=\begin{pmatrix}
0 & 0 \\
0 & 2
\end{pmatrix},\,\,M_{m^2}=\begin{pmatrix}
2 & 0 \\
0 & 0
\end{pmatrix},\end{eqnarray}
while the integral Hodge classes $lm$ and $c_2(X)$ satisfy
\begin{eqnarray}\label{eqdematlmc2}M_{lm}=\begin{pmatrix}
0 & 2 \\
2 & 0
\end{pmatrix},\,\,M_{c_2(X)}=\begin{pmatrix}
0 & \lambda \\
\lambda & 0
\end{pmatrix},\end{eqnarray}
with $\lambda=30$ as for a hyper-K\"{a}hler fourfold of $K3^{[2]}$ deformation type.
It is indeed a general fact that the Beauville-Bogomolov form for hyper-K\"{a}hler fourfolds is a nonzero multiple of the quadratic form $q_{c_2(X)}(\alpha,\beta)=\langle \alpha\beta,c_2(X)\rangle_X$ on $H^2(X,\mathbb{Q})$. The computation of the coefficient $\lambda$ is as in the case of $K3^{[2]}$ since it is determined by the Riemann-Roch polynomial and the Fujiki constant. It follows from (\ref{eqdecompH4}) that the space of rational  Hodge classes on $X$ is generated by ${\rm Sym}^2{\rm NS}(X)_{\mathbb{Q}}$ and $c$, and the kernel of the map $\eta\rightarrow M_\eta$ on ${\rm Hdg}^4(X,\mathbb{Q})$ is of  rank $1$, generated by $c_2(X)-15ml$.

 Let $f=[\Sigma]$ and $e=(l+m)^2-3f\in H^4(X,\mathbb{Z})$ be the two considered pseudoeffective classes.
The corresponding matrices $M_e$ and $M_f$ thus satisfy
\begin{eqnarray}\label{eqdecompmat}
3M_f+M_e=\begin{pmatrix}
2 & 4 \\
4 & 2
\end{pmatrix}\end{eqnarray}
and as both matrices are nonzero, with integral nonnegative coefficients, we must have
\begin{eqnarray}\label{eqMfMe} M_f=\begin{pmatrix}
0 & 1 \\
1 & 0
\end{pmatrix},\,\, M_e=\begin{pmatrix}
2 & 1 \\
1 & 2
\end{pmatrix}.
\end{eqnarray}
Note that \begin{eqnarray}\label{eqquelquesM}\begin{pmatrix}
0 & 1 \\
1 & 0
\end{pmatrix}=M_{\frac{1}{2}ml},\,\,\begin{pmatrix}
2 & 1 \\
1 & 2
\end{pmatrix}=M_{l^2+m^2+\frac{1}{2}ml}.\end{eqnarray}

It follows from (\ref{eqMfMe}) and (\ref{eqquelquesM}) that for some coefficient $\eta\in\mathbb{Q}$
we have
\begin{eqnarray}\label{eqvaleurdeeetf} f=\frac{1}{2}ml+\eta(c_2(X)-15ml),\,\,e=l^2+m^2+\frac{1}{2}ml-3\eta(c_2(X)-15ml).
\end{eqnarray}
We now compute the self-intersection of these integral cohomology classes and conclude that
$$f^2=\frac{1}{2}+\eta^2(c_2(X)-15ml)^2=\frac{1}{2}+378\eta^2.$$
 We thus conclude that $2\cdot 378\eta^2$ is an integer, and as  $378=27\cdot2\cdot7$, it follows  that $6\eta$ is an integer.  From the first equation in (\ref{eqvaleurdeeetf}), with $f$ effective, we now conclude   that $\eta <0$ since otherwise $\eta\geq \frac{1}{6}$ and $\frac{1}{2}-15\eta<0$, so
 $$\eta c_2(X)=f+(15\eta-\frac{1}{2})lm$$
 with all coefficients positive and $f$ effective. This is equation (32) in  \cite[Proof of Claim 6.2]{DHMV} which is  proved there   to be impossible.

 From the second equation in (\ref{eqvaleurdeeetf}), we now deduce that
 \begin{eqnarray}\label{eqpoureavecc2} -3\eta c_2(X)=e-l^2-m^2 +(-45\eta-\frac{1}{2})ml.\end{eqnarray}
 We claim that this implies $\eta\geq-\frac{1}{18}$. This is proved by integrating against both terms of  (\ref{eqpoureavecc2}) a class $\alpha^2$, where $\alpha\in H^{1,1}(X)_{\mathbb{R}}$ is  in the boundary of the K\"{a}hler cone and satisfies   $q(\alpha)=0$. We get
 \begin{eqnarray}
 \label{eqquationintege} 0=\langle e,\alpha^2\rangle_X-\langle l^2,\alpha^2\rangle_X-\langle m^2,\alpha^2\rangle_X+(-45\eta-\frac{1}{2})\langle lm,\alpha^2\rangle_X.
 \end{eqnarray}
 Using the Fujiki relations (with Fujiki constant equal to $3$), we have
 $$\langle \beta\gamma,\alpha^2\rangle_X=2q_X(\alpha,\gamma)q_X(\alpha,\beta)$$
 for any $\alpha,\,\beta,\,\gamma\in H^2(X,\mathbb{C})$ such that $q_X(\alpha)=0$.
Thus (\ref{eqquationintege}) gives
$$0=\langle e,\alpha^2\rangle_X-2q_X(l,\alpha)^2-2q_X(m,\alpha)^2+2(-45\eta-\frac{1}{2})q_X(l,\alpha)q_X(m,\alpha)$$
and, as $e$ is pseudoeffective, $\langle e,\alpha^2\rangle_X\geq 0$ when $\alpha$ is in the boundary of the K\"{a}hler cone, which by \cite[Proposition 3.2]{huybrechts}, is satisfied once $q_X(l,\alpha)\geq0,\,q_X(m,\alpha)\geq0$. In conclusion, we proved that
$$q_X(l,\alpha)^2+q_X(m,\alpha)^2+(45\eta+\frac{1}{2})q_X(l,\alpha)q_X(m,\alpha)\geq 0
$$
once $q_X(l,\alpha)\geq0,\,q_X(m,\alpha)\geq0$.
It follows that
$45\eta+\frac{1}{2}\geq -2$, which proves the claim.

As we know that $6\eta$ is an integer and $\eta<0$, the claim gives a contradiction proving the lemma.
\end{proof}

The proof of Theorem \ref{propLMnef3} will be obtained by a case by case study. Assuming $\phi_{L+M}$ has rationally connected image, we have, by adapting arguments of  \cite{ogradyccm}, the following three possibilities (the case where the image is a curve being  directly excluded by the fact that no divisor in $|L+M|$ is reducible).
\begin{enumerate}
\item \label{itemsurf}  $Y=\phi_{L+M}(X)\subset \mathbb{P}^5 $ is a surface of degree $d\geq 4$.
\item \label{item3fold}  $Y=\phi_{L+M}(X)$ is a $3$-fold  of degree $3\leq d\leq 6$. In the case of degree $d=6$, the indeterminacy locus of $\phi_{L+M}$ has dimension $0$.
\item \label{item4fold} $Y=\phi_{L+M}(X)$ is a $4$-fold  of degree $2\leq d\leq 4$ and the degree of $\phi_{L+M}:X\dashrightarrow Y$ is at least $3$.
\end{enumerate}
The  bound on the degree $d$ in (\ref{itemsurf}) follows from the fact that the image $Y$ is linearly nondegenerate in $\mathbb{P}^5$.

The  bound on the degree $d$ in (\ref{item3fold}) follows from the fact that the image $Y$ is linearly nondegenerate in $\mathbb{P}^5$ and that the general fiber is a curve $F$ such that $d[F]+e=(m+l)^3$ in ${\rm Hdg}^{6}(X,\mathbb{Z})$ for some pseudoeffective class $e$ (we use the ampleness of $L+M$ here). Indeed,  as we assumed $\rho(X)=2$, the group ${\rm Hdg}^{6}(X,\mathbb{Q})$ is generated by $l^2m$ and $m^2l$. An integral pseudoeffective curve  class  in $X$ can thus be written as $\alpha l^2m+\beta m^2l$. By intersecting with $l$ and $m$, we find that $2\alpha$ and $2\beta$ are integers, and furthermore, they are nonnegative since $L$ and $M$ are nef. Applying this argument to $[F]$ and $e$, and using $(m+l)^3=3m^2l+3ml^2$, the equality $d[F]+e=(m+l)^3$, with $[F]$ and $e$ pseudoeffective implies that $d\leq 6$.

The bound on the degree $d$ in (\ref{item4fold}) follows from ampleness of $L+M$ and the fact that $(l+m)^4=12$. Furthermore, as in \cite{ogradyccm} (see also Lemma \ref{proinvol}), one uses the fact that the degree of $X$ over $Y$ is at least $3$ since $p_g(\widetilde{Y})=0$. Here and in the sequel, we  denote by $\widetilde{Y}$ a desingularization of $Y$ and $\tilde{\phi}:\widetilde{X}\rightarrow \widetilde{Y}$ a  desingularization of $\phi: X\dashrightarrow \widetilde{Y}$.

We thus have to exclude each of these possibilities. Let us start by excluding a few easy cases.
\begin{lemm}\label{leeasyA} The image  $Y\subset \mathbb{P}^5 $  of $\phi_{L+M}$ is not a surface of degree $d\geq 4$.
\end{lemm}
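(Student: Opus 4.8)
The plan is to exclude the case where $Y=\phi_{L+M}(X)\subset\mathbb{P}^5$ is a surface of degree $d\geq 4$ by analyzing the fibration structure that such an image would impose on $X$. If $Y$ is a surface, then $\phi_{L+M}$ endows $X$ with a rational fibration over $Y$ whose general fiber $F$ is a surface, and the class of this fiber is controlled by the intersection theory of $l+m$. Specifically, since $Y$ is linearly nondegenerate and spans $\mathbb{P}^5$, a general codimension-$2$ linear section of $\mathbb{P}^5$ meets $Y$ in $d\geq 4$ points, so the surface $\Sigma=F$ which is the general fiber satisfies a relation of the form $d[\Sigma]+e=(l+m)^2$ in $\mathrm{Hdg}^4(X,\mathbb{Z})$, where $e$ is a pseudoeffective class coming from the base locus of the pencil cut out on $X$ (here the ampleness of $L+M$, hence of any big divisor, guarantees that the residual class is pseudoeffective).

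First I would make this numerical relation precise: pulling back two general hyperplanes through a general point, the general fiber $\Sigma$ of $\phi_{L+M}$ over $Y$ appears with multiplicity $d$ in $(l+m)^2$, up to an effective residual divisor-class product, giving $(l+m)^2 = d[\Sigma]+e$ with $e$ pseudoeffective. The key point is then that $d\geq 4$ forces $(l+m)^2-3[\Sigma]$ to be pseudoeffective: indeed $(l+m)^2-3[\Sigma] = (d-3)[\Sigma]+e$, and since $d-3\geq 1$ and $[\Sigma]$ is the class of an effective (hence pseudoeffective) surface while $e$ is pseudoeffective, the whole class is pseudoeffective. This is exactly the configuration forbidden by Lemma \ref{nolmsur2crucial}, which asserts that no surface $\Sigma\subset X$ can have $(l+m)^2-3[\Sigma]$ pseudoeffective under the very general $\rho(X)=2$ hypothesis. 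Thus we reach a contradiction and the surface case is excluded.

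The main obstacle I anticipate is justifying cleanly the numerical relation $(l+m)^2=d[\Sigma]+e$ with $e$ pseudoeffective, in the presence of the indeterminacy locus of the rational map $\phi_{L+M}$. The honest way to handle this is to pass to the resolution $\tilde{\phi}:\widetilde{X}\rightarrow\widetilde{Y}$ and write the general fiber as the class $\tau_*[\widetilde{\Sigma}]$, then compare $\tau^*(l+m)^2$ with $d[\widetilde{\Sigma}]$ plus exceptional and base-locus contributions, pushing forward to $X$. The ampleness (equivalently, big $=$ ample) assumption is what ensures the leftover classes push forward to pseudoeffective classes rather than to something indefinite; one must check that pushing forward preserves pseudoeffectivity here, which it does since $\tau$ is birational. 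Once the pseudoeffectivity of the residual term is secured, the inequality $d\geq 4>3$ does all the remaining work, and the contradiction with Lemma \ref{nolmsur2crucial} is immediate. I would therefore structure the proof as: (1) set up the resolution and the fiber class $[\Sigma]$; (2) derive $(l+m)^2=d[\Sigma]+e$ with $e$ pseudoeffective using linear nondegeneracy and ampleness; (3) observe $(l+m)^2-3[\Sigma]=(d-3)[\Sigma]+e$ is pseudoeffective; (4) invoke Lemma \ref{nolmsur2crucial} for the contradiction.
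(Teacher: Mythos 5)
Your proposal is correct and follows essentially the same route as the paper: the paper likewise writes $(l+m)^2-d[F]=e$ with $e$ the class of an effective surface supported on the base locus of $|L+M|$, and then invokes Lemma \ref{nolmsur2crucial} using $d\geq 4$. The extra care you take with the resolution and pushforward is a reasonable elaboration of the same argument.
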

\begin{proof} Otherwise, the general fiber $F$ is a surface in $X$ such that
$(l+m)^2-d[F]=e$, where $e$ is the class of a surface (which is a union of irreducible  components of the base-locus of $|L+M|$), and this  is excluded by Lemma \ref{nolmsur2crucial}.
\end{proof}
\begin{lemm}\label{leeasyB} The image $Y\subset \mathbb{P}^5 $  of $\phi_{L+M}$ is not a threefold  of degree $3$.
\end{lemm}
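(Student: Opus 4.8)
The plan is to reduce the degree-$3$ threefold case to a situation already excluded by Lemma \ref{nolmsur2crucial}, by analyzing the general fiber $F$ of $\phi_{L+M}$, which is now a curve. Since $Y\subset\mathbb{P}^5$ is a nondegenerate threefold of degree $3$, it is a variety of minimal degree (degree equals codimension plus one), hence it is rationally connected and in particular covered by lines. First I would record the intersection-theoretic constraint coming from ampleness of $L+M$: the general fiber $F$ satisfies $(l+m)^3 = d[F] + e = 3[F]+e$ in $\mathrm{Hdg}^6(X,\mathbb{Z})$ for some pseudoeffective curve class $e$, because pulling back a line in $Y$ through a general point and intersecting with two general members of $|L+M|$ accounts for the degree. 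The aim is to turn this $1$-cycle relation into a $2$-cycle relation of the type forbidden by Lemma \ref{nolmsur2crucial}.

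The key step is to pass from curves to surfaces. Because $Y$ is covered by lines, I would take the family of preimages in $X$ of the lines on $Y$: a general line $\ell\subset Y$ pulls back to a curve, and as $\ell$ moves in a covering family of $Y$ (of dimension $\dim Y - 1 = 2$, since the lines through a general point form a positive-dimensional family on a minimal-degree threefold, and the total space of lines covers $Y$), the preimages sweep out $X$. More usefully, I would consider the surface $T$ swept out by the fibers $F$ over a general line $\ell\subset Y$, i.e. $T = \phi_{L+M}^{-1}(\ell)$. Its class $[T]\in\mathrm{Hdg}^4(X,\mathbb{Z})$ is computed by the projection formula: since $\ell$ has degree $1$ in $\mathbb{P}^5$, one gets $(l+m)\cdot[T]$-type relations, and intersecting $(l+m)^2$ with the family of such surfaces recovers $(l+m)^2 = d'[T]+e'$ for a suitable multiplicity $d'$ and pseudoeffective surface class $e'$. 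The clean way to produce the forbidden configuration is: a general member $S$ of the net obtained by intersecting two general divisors in $|L+M|$ maps to a curve (the general hyperplane-times-hyperplane section of $Y$, a curve of degree $3$) under $\phi_{L+M}$, and the fibers of $S\to(\text{curve})$ recombine so that $(l+m)^2 = 3[F']+e'$ with $[F']$ the class of the surface sweeping out fibers and $e'$ pseudoeffective.

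The hard part will be getting the multiplicity exactly equal to $3$ and certifying that the residual class $e'$ is genuinely pseudoeffective (not merely of the right numerical type), so that Lemma \ref{nolmsur2crucial} applies verbatim. This requires controlling the base locus of $|L+M|$: the hypothesis $h^0(X,L)=0$ (so no member of $|L+M|$ is reducible) is what prevents the fiber surface from splitting off unwanted components and forces the residual piece of $(l+m)^2$ to be a bona fide effective surface class. I would argue that because $Y$ has minimal degree $3$ in $\mathbb{P}^5$, a general $\mathbb{P}^3$-section of $Y$ is a plane cubic curve (arithmetic genus $1$), so the generic fiber surface of $X$ over this curve, together with the base-locus contribution, realizes $(l+m)^2$ as $3[F']+e'$; the degree $3$ is forced precisely by $\deg Y = 3$. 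Once this surface relation is in place, Lemma \ref{nolmsur2crucial} with $[\Sigma]=[F']$ gives the contradiction. The main obstacle, and the point needing the most care, is therefore the effectivity of the residual class $e'=(l+m)^2-3[F']$ rather than its numerical identification, which is why I would lean on nefness of $L,M$ and $h^0(X,L)=0$ throughout.
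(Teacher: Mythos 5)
Your reduction to Lemma \ref{nolmsur2crucial} has a genuine gap at the step you yourself flag as ``the hard part'': getting the multiplicity $3$. That lemma requires a \emph{single} surface class $[\Sigma]$ with $(l+m)^2-3[\Sigma]$ pseudoeffective, so you would need the degree-$3$ curve $C=\mathbb{P}^3\cap Y$ to be numerically equivalent in $Y$ to three copies of one and the same line class, whose preimages in $X$ would then be three numerically equivalent surfaces. But $\deg Y=3$ only controls the total degree of $C$, not such a decomposition. Concretely, by Eisenbud--Harris a nondegenerate threefold of degree $3$ in $\mathbb{P}^5$ is a rational normal scroll or a cone over one; in the smooth case $Y\cong\mathbb{P}^1\times\mathbb{P}^2$ one has $H^2=2A+B$ in $H_2(Y,\mathbb{Z})$, where $A=[\{pt\}\times\ell]$ and $B=[\mathbb{P}^1\times\{pt\}]$ are the two \emph{non-equivalent} line classes, and a general $\mathbb{P}^3$-section is an irreducible twisted cubic, not a union of three lines. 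So the relation you would obtain is $(l+m)^2=2[\Sigma_A]+[\Sigma_B]+e'$ with $[\Sigma_A]\neq[\Sigma_B]$, which Lemma \ref{nolmsur2crucial} does not forbid. (For the same reason one cannot just invoke the curve-class relation $(l+m)^3=3[F]+e$: the paper's Claim \ref{claimpourle63}, which exploits that relation, genuinely needs $d\geq 4$.)

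The paper's actual proof is much shorter and uses a different mechanism. By the Eisenbud--Harris classification of varieties of minimal degree, $Y$ is (a cone over) a rational normal scroll, hence fibered into linear $\mathbb{P}^2$'s over $\mathbb{P}^1$; any hyperplane containing one of these planes meets $Y$ in that plane plus a residual quadric surface, and both components move. Pulling back such reducible hyperplane sections under $\phi_{L+M}$ produces reducible members of $|L+M|$ with two mobile components, contradicting the hypotheses that $h^0(X,L)=0$ and that the pseudoeffective cone of $X$ is generated by $L$ and $M$. Your observation that $Y$ has minimal degree is exactly the right starting point; the correct conclusion to draw from it is the scroll structure and the resulting mobile reducible hyperplane sections, not a numerical relation feeding into Lemma \ref{nolmsur2crucial}.
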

\begin{proof} By \cite{eisenbudharris}, a  linearly normal $3$-fold $Y$ of degree $3$ in $\mathbb{P}^5$ is a cone over a rational normal scroll. Such a  $Y$ is   fibered into linear spaces over $\mathbb{P}^1$ and  has many reducible hyperplane sections, in the sense that it is swept-out by reducible hyperplane sections, with at least two mobile irreducible components. In that case,  $X$ would thus have, by taking pullback under $\phi_{L+M}$, reducible divisors in $|L+M|$, contradicting our assumption that $h^0(X,L)=0$ (see Remark \ref{remapourirredpsefnef}).
\end{proof}
\begin{lemm}\label{leeasyC} The image $Y\subset \mathbb{P}^5 $  of $\phi_{L+M}$ is not a fourfold  of degree $4$.
\end{lemm}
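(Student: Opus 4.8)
The plan is to exclude this last surviving possibility by a Hodge-theoretic argument on $H^4$, since the geometric methods of Theorem \ref{theomir} and Proposition \ref{prodeg3} are unavailable in dimension $4$ (the dimension count producing a common kernel of the relevant $2$-forms fails, and the isotropy argument of Proposition \ref{prodeg3} degenerates precisely when $n=2$). Here $Y\subset\mathbb{P}^5$ is an irreducible quartic hypersurface, and since $(l+m)^4=12=3\cdot 4$ the base locus of $|L+M|$ contributes nothing to the top self-intersection; hence $|L+M|$ is base-point-free and $\phi_{L+M}$ is a \emph{finite} morphism $\phi:X\to Y$ of degree $3$. As the image is assumed rationally connected, any desingularization $\widetilde{Y}$ satisfies $h^{p,0}(\widetilde{Y})=0$ for $p>0$; in particular $h^{4,0}(\widetilde{Y})=0$.

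First I would analyse the pullback $\phi^*:H^4(\widetilde{Y},\mathbb{Q})\hookrightarrow H^4(\widetilde{X},\mathbb{Q})$, which is injective since $\phi$ is generically finite and is a morphism of Hodge structures. Using the isomorphism ${\rm Sym}^2H^2(X,\mathbb{Q})\cong H^4(X,\mathbb{Q})$ recalled in the proof of Lemma \ref{nolmsur2crucial}, together with $H^2(X,\mathbb{Q})={\rm NS}(X)_{\mathbb{Q}}\oplus H^2(X,\mathbb{Q})_{\rm tr}$, I would decompose $H^4(X,\mathbb{Q})$ into simple sub-Hodge-structures. Since $X$ is very general, the Mumford-Tate group of $H^2(X,\mathbb{Q})_{\rm tr}$ is the full orthogonal group of $q_X$, so ${\rm Sym}^2H^2(X,\mathbb{Q})_{\rm tr}$ splits as $\mathbb{Q}c\oplus T$, where $c$ is the Beauville-Bogomolov class (of type $(2,2)$) and $T:={\rm Sym}^2_0H^2(X,\mathbb{Q})_{\rm tr}$ is a simple Hodge structure with $h^{4,0}(T)=1$ and $h^{3,1}(T)=19$. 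The remaining transcendental summand ${\rm NS}(X)_{\mathbb{Q}}\otimes H^2(X,\mathbb{Q})_{\rm tr}\cong H^2(X,\mathbb{Q})_{\rm tr}(-1)^{\oplus 2}$ has $h^{4,0}=0$ and $h^{3,1}=2$, while ${\rm Sym}^2{\rm NS}(X)_{\mathbb{Q}}$ consists of Hodge classes. The key point is that $T$ is the \emph{unique} simple factor of $H^4(X,\mathbb{Q})$ carrying a nonzero $(4,0)$-part.

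The contradiction then runs as follows. The image $W:={\rm Im}\,\phi^*$ is a sub-Hodge-structure with $h^{4,0}(W)=h^{4,0}(\widetilde{Y})=0$; by semisimplicity of the category of polarizable Hodge structures and simplicity of $T$, the factor $T$ cannot occur in $W$. Therefore $\phi^*H^{3,1}(\widetilde{Y})$ lies in the $(3,1)$-part of ${\rm NS}(X)_{\mathbb{Q}}\otimes H^2(X,\mathbb{Q})_{\rm tr}$, namely in $\sigma_X\cdot{\rm NS}(X)_{\mathbb{C}}=\langle\sigma_X l,\sigma_X m\rangle$, so that $h^{3,1}(\widetilde{Y})\leq 2$. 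On the other hand, by Griffiths' description of the cohomology of a hypersurface a smooth quartic fourfold has $h^{3,1}=21$, contradicting $h^{3,1}(\widetilde{Y})\leq 2$.

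The main obstacle is that $Y$ need not be smooth, so the contradiction requires the lower bound $h^{3,1}(\widetilde{Y})\geq 3$ for the actual quartic image. Here I would exploit that $\phi$ is finite and $X$ smooth in order to control the singularities of $Y$: only quartics with a very large singular locus can have $h^{3,1}$ of a resolution as small as $2$, and such degenerate quartics ought to be excluded by the constraints already at hand — the absence of reducible members of $|L+M|$, which as in Lemma \ref{leeasyB} forbids $Y$ from being swept out by reducible hyperplane sections, and the pseudoeffectivity obstruction of Lemma \ref{nolmsur2crucial}. Quantifying how far the singularities of an irreducible quartic fourfold can depress the Hodge number $h^{3,1}$ of a resolution, and ruling out the extremal cases, is the step on which I expect to spend the most effort.
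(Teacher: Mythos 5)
Your reduction to a finite morphism $\phi:X\to Y$ of degree $3$ onto the quartic is exactly how the paper begins (it invokes O'Grady's Corollary 4.7 for the fact that $12=4\cdot 3$ forces $\phi_{L+M}$ to be a base-point-free morphism, and ampleness of $L+M$ for quasi-finiteness). But your premise for then abandoning the geometric route is mistaken: the paper's proof of this lemma consists precisely in applying Proposition \ref{prodeg3} to the induced quasi-finite degree-$3$ morphism $X\to Y_n$ onto the normalization, using that $-K_{(Y_n)_{\rm reg}}$ is big and that big divisors on $X$ are ample because the pseudoeffective cone is generated by the two nef classes $l,m$. The isotropy argument there (the ramification divisor $R$ and the residual divisor $R'=f^{-1}(f(R))-2R$ meet in a $\sigma_X$-isotropic surface, which is incompatible with $R$ being big on the threefold $\overline{R'}$) is tailored to the fourfold case; the hypothesis ``$n\geq 4$'' in the statement of Proposition \ref{prodeg3} must be read as a condition on the dimension $2n\geq 4$, as both its proof and its application here make clear.

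The Hodge-theoretic substitute you propose has a genuine gap at its decisive step. Granting the correct observation that the simple factor $T={\rm Sym}^2_0H^2(X,\mathbb{Q})_{\rm tr}$ cannot occur in $\phi^*H^4(\widetilde{Y},\mathbb{Q})$, you still need two things you do not have. First, control of the exceptional summand of $H^4(\widetilde{X},\mathbb{Q})$ created when you resolve $X\dashrightarrow \widetilde{Y}$ over $\phi^{-1}(Y_{\rm sing})$: its $(3,1)$-part (for instance $H^{2,0}$ of a blown-up surface) could absorb $\phi^*H^{3,1}(\widetilde{Y})$, so the bound $h^{3,1}(\widetilde{Y})\leq 2$ is not yet established. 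Second, and far more seriously, the lower bound $h^{3,1}(\widetilde{Y})\geq 3$ for a resolution of the actual, possibly highly singular, irreducible quartic $Y$. The value $21$ holds only for smooth quartics; quartic fourfolds singular along a surface can have resolutions with very small $h^{3,1}$ (and can even be rational), and the constraints you invoke --- the absence of a sweeping family of reducible hyperplane sections and Lemma \ref{nolmsur2crucial} --- do not obviously exclude them. Since you acknowledge this step is open, the proposal does not yet constitute a proof; the efficient way to close this case is the one the paper takes, via Proposition \ref{prodeg3}.
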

\begin{proof} By item \ref{item4fold} above, the rational map $\phi_{L+M}:X \dashrightarrow Y$ has degree $\geq 3$. As $(L+M)^4=12$ and $L+M$ is ample, the case where ${\rm dim}\,Y={\rm deg}\,Y=4$ is possible only if $\phi_{L+M}$ is a morphism of degree $3$ (see \cite[Corollary 4.7]{ogradyccm}). As $L+M$ is ample, the morphism  $\phi_{L+M}$ is quasifinite to its image and the same is true for the induced morphism $\phi_{L+M}:X\rightarrow Y_n$, where $Y_n$ is the normalization of $Y$. The big divisors are ample on $X$ since, by Remark \ref{remapourirredpsefnef}, the pseudoeffective cone of $X$ is generated by two nef line bundles, and  the regular locus of $Y_n$ has a big anticanonical bundle, hence  this would  contradict Proposition \ref{prodeg3}.
\end{proof}

\begin{lemm}\label{le3sur3dim4}  If the image $Y \subset \mathbb{P}^5$ of $\phi_{L+M }$ is a hypersurface of degree $3$, the degree of
$\phi_{L+M } : X \dashrightarrow Y$ is $3$.
\end{lemm}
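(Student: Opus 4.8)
The plan is to pin down the possible values of $\delta:=\deg\big(\phi_{L+M}:X\dashrightarrow Y\big)$ numerically, and then to exclude the largest one by a ramification argument. Since $Y$ is a hypersurface of degree $3$ in $\mathbb{P}^5$, one has $\dim Y=4=\dim X$, so $\phi_{L+M}$ is generically finite. Choosing a resolution $\tilde\phi:\widetilde X\to Y$ of $\phi_{L+M}$ with $\tau:\widetilde X\to X$ birational, I would write $\tau^*(L+M)=H+E$, where $H=\tilde\phi^*\mathcal O_Y(1)$ is nef and $E\geq0$ is the fixed part. Because $L+M$ is ample and $E$ is effective, expanding $(H+E)^4$ gives $(L+M)^4=H^4+E\cdot\big((L+M)^3+(L+M)^2H+(L+M)H^2+H^3\big)\geq H^4$, since the bracket is a sum of classes pairing nonnegatively with the effective class $E$, and $E\cdot(L+M)^3>0$ unless $E=0$ (Kleiman). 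As $H^4=\delta\cdot\deg Y=3\delta$, this reads $3\delta\leq 12$, so $\delta\leq 4$, with equality forcing $E=0$. Together with $\delta\geq3$ (item (\ref{item4fold}), which uses Lemma \ref{proinvol} to rule out $\delta=2$), we get $\delta\in\{3,4\}$, and $\delta=4$ occurs exactly when $E=0$, i.e. when $|L+M|$ is base-point free and $\phi_{L+M}$ is a finite morphism of degree $4$ onto $Y$ (finite because $L+M$ is ample).

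It remains to exclude this last case, and here I would adapt the argument of Proposition \ref{prodeg3}. Assume $\phi:=\phi_{L+M}:X\to Y$ is a finite morphism of degree $4$; passing to the normalization we may take $Y$ normal. As $Y$ is a cubic fourfold one has $-K_{Y_{\mathrm{reg}}}=3\mathcal O_Y(1)$, so the ramification divisor satisfies $R\in|3(L+M)|$ and is big, hence ample by the hypothesis that big divisors on $X$ are ample. I would then introduce the residual divisor $R':=\phi^{-1}(\phi(R))-2R$, which is effective and nonempty, and show that the intersection $S:=R\cap R'$ over $Y_{\mathrm{reg}}$ consists of points $x$ whose fibre $\phi^{-1}(\phi(x))$ concentrates at $x$ with multiplicity $\geq3$; for such $x$ a fixed multiple $kx$ with $k\geq3$ is pulled back from a $0$-cycle on $Y$, so by Mumford's theorem \cite{mumford} and $h^{2,0}(\widetilde Y)=0$ the form $\sigma_X$ restricts to zero on a desingularization of $S$. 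Treating the singular fibres as in Lemma \ref{le2pour34}, one concludes that $\overline R\cap\overline{R'}$ is isotropic for $\sigma_X$, hence a surface. The contradiction is then the one in Proposition \ref{prodeg3}: $\overline{R'}$ is a nonempty divisor of $X$, so $\sigma_X$ restricts to a nonzero $2$-form $\overline\sigma$ on a desingularization $\widetilde{\overline{R'}}$, while the pull-back of the ample class $\overline R$ is big on $\widetilde{\overline{R'}}$ and cuts out there the isotropic surface $\overline R\cap\overline{R'}$, which is incompatible with the bigness of $\overline R$ restricted to $\overline{R'}$.

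The main obstacle is precisely this exclusion of $\delta=4$, i.e. the promotion of Proposition \ref{prodeg3} from degree $3$ to degree $4$. The delicate point is the ramification bookkeeping: in degree $4$ the fibre over a general branch point can degenerate in several ways (a triple point, or two simple ramifications colliding), so identifying $R\cap R'$ with the locus where some multiple $kx$, $k\geq3$, is pulled back requires a finer local analysis than the clean ``length exactly $3$'' dichotomy available in degree $3$. I would also emphasize why the infinitesimal route fails here: the Mumford relations $\sum_{i=1}^4 p_i^*\sigma_X=0$ and $\sum_{i=1}^4 p_i^*\sigma_X^2=0$ on the four-dimensional fibre product only force the first two elementary symmetric functions of the forms $p_i^*\sigma_X$ to vanish, which on a $4$-dimensional variety leaves too much freedom to yield a pointwise contradiction — this is the fourfold shadow of the reason Theorem \ref{theomir} needs dimension $\geq6$. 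Hence the global ramification and isotropy argument, rather than the pointwise form computation, is the right tool.
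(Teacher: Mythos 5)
Your reduction to the single remaining case $\deg\phi_{L+M}=4$ matches the paper's first step: since $L+M$ is ample, $(L+M)^4=12\geq \deg Y\cdot\deg\phi_{L+M}$, and combined with the lower bound $\deg\phi_{L+M}\geq 3$ from item \ref{item4fold} this leaves only degree $4$ to exclude, the equality case forcing $\phi_{L+M}$ to be a morphism (the paper quotes \cite[Corollary 4.7]{ogradyccm} here; your ``$E=0$ by Kleiman'' is too quick, since $\tau^*(L+M)$ is only nef, not ample, on $\widetilde X$, but that is a side issue). The genuine gap is the step you yourself flag: the exclusion of degree $4$. Your plan to promote Proposition \ref{prodeg3} from degree $3$ to degree $4$ does not go through, and the obstruction is not mere bookkeeping. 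The whole force of Proposition \ref{prodeg3} is that a point $x$ of $R\cap R'$ must carry the \emph{entire} fibre, so that $3x=f^*(f(x))$ in ${\rm CH}_0(X)$ and Mumford's theorem makes $\sigma_X$ vanish on that locus. In degree $4$ the generic point of $R\cap R'$ lies over a fibre of type $(2,2)$ or $(3,1)$; the cycle concentrated at $x$ is then only part of $f^*(f(x))$, and Mumford's theorem yields only a relation of the shape $2\sigma_X|_S+2\iota^*\sigma_X|_S=0$ for the residual correspondence $\iota$, which does not force $\sigma_X|_S=0$. So the isotropy of $\overline R\cap\overline{R'}$ --- the engine of the contradiction in Proposition \ref{prodeg3} --- is simply not available, and your argument stops there.

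The paper's actual exclusion of degree $4$ is entirely different and is worth knowing. Assuming $\phi_{L+M}$ is a degree $4$ morphism onto a cubic $Y$, one first notes that $Y$ cannot be singular in codimension $1$ (else $|L+M|$ would contain reducible members), so a general plane section $C\subset Y$ is a smooth elliptic curve. Restricting to the mobile curve $C_X=\phi_{L+M}^{-1}(C)$, the results of \cite{DHMV} (proof of Proposition 6.4 and Lemma 6.8) show that $\phi_{2L+M}|_{C_X}$ factors through $\phi_{L+M}|_{C_X}:C_X\to C$; since $|L+M|$ and $|2L+M|$ have no fixed component and no base point along $C_X$, the line bundle $(2L+M)|_{C_X}$ is pulled back from $C$ under a degree $4$ map, hence has degree divisible by $4$. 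This contradicts $(2L+M)\cdot(L+M)^3=18$. To complete your write-up you would need either this divisibility argument or a genuinely new replacement for the Mumford/isotropy step in degree $4$.
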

\begin{proof} The rational map $\phi_{L+M }$ is of  degree $\geq 3$ by item \ref{item4fold} above, and it cannot be of degree $\geq 5$  since
$(L+M)^4 = 12 \geq {\rm  deg}\, Y {\rm deg }\,\phi_{L+M }$, because $L+M$ is ample (see \cite{ogradyccm}). So we have to exclude
the case where ${\rm deg}\, \phi_{L+M } = 4$ and ${\rm deg}\, Y =3$, where the equality
$(L +M)^4 = 12 = {\rm deg}\, Y {\rm deg}\, \phi_{L+M }$
holds, implying that $\phi_{L+M }$ is a morphism (see \cite[Corollary 4.7]{ogradyccm}). Let $C \subset  Y$ be a general plane section and
$C_X \subset X$ be its inverse image in $X$. We observe that $Y$ cannot be singular in codimension $1$,
as otherwise it has reducible hyperplane sections, hence, by taking the inverse images under
the morphism $\phi_{L+M }$, X has reducible members in $|L + M|$. It follows that  the curve $C$ is a smooth
elliptic curve. We use now the results proved in the course of the proof of Proposition 6.4 and in  Lemma 6.8 of \cite{DHMV}. They imply that,
under our assumptions on $X,\ L,\, M$, the rational map $ \phi_{2L+M\mid C_X}$ factors through
the degree $4$ rational map $\phi_{L+M\mid C_X} : C_X \rightarrow  C$. Note that the linear systems $|L+M|$ and $|2L+M|$ on
$X$ have no fixed components. Indeed, this is clear for the first as $|L+M|$ has no reducible divisors; for the second one, as we assumed $h^0(X,L)=0$, and we have   $h^0(X,2L+M)=10$, $h^0(X,L+M)=6$, the only fixed component could be in $|2L|$ and we would then have $h^0(X,M)=10$, or it could be in $|M|$ and we would then have $h^0(X,2L)=10$. Both possibilities are easily ruled out, using \cite[Lemma 5.1]{DHMV} and \cite{huyxu} (see  \cite[Section 5.1]{DHMV} for the complete argument).  As the curve  $C_X$ is mobile, it follows that the linear systems $|L+M|$ and   $|2L+M|$
have no base points along $C_X$, hence the factorization of the morphisms mentioned above shows that the
linear systems $H^0(X,L+M)_{\mid C_X }, \, H^0(X, 2L+M)_{\mid C_X} $  are pulled-back from linear systems on $C$.
A fortiori, we get that the line bundle $(2L + M)_{\mid C_X}$ is pulled-back from a line bundle on $C$, hence the
degree of $(2L + M)_{\mid C_X}$ is divisible by $4$. This contradicts the fact that
$${\rm deg}\, (2L + M)_{\mid C_X} = (2L +M)(L +M)^3 = 3(2L +M)(L^2M +ML^2) = 18,$$
which is obtained using the equalities $L^2M^2 = 2, L^3M = 0, LM^3 = 0$ of (\ref{eqpourinfibdr}) and (\ref{eqpourinfibdrsupinfo}).
\end{proof}
\begin{lemm}\label{leeasyD} The image $Y\subset \mathbb{P}^5 $  of $\phi_{L+M}$ is not a fourfold  of degree $2$.
\end{lemm}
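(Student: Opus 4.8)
The plan is to exploit that a quadric fourfold is uniruled by planes, so as to exhibit $X$ as a rational fibration over $\mathbb{P}^2$ with surface fibres, and then to obstruct such a fibration by the maximality of the Mumford--Tate group, in the spirit of Lemma \ref{levgeemen}. First I would record the elementary structure: since $X$ is irreducible and $L+M$ is ample, $Y$ is an irreducible quadric fourfold, hence normal with $-K_Y=\mathcal{O}_Y(4)$ ample; a desingularization $\widetilde{Y}$ is therefore rationally connected with $p_g(\widetilde{Y})=0$. Combining item \ref{item4fold} (that is, $\deg\phi\geq 3$, since $\deg\phi=2$ is excluded by Lemma \ref{proinvol}) with $2\deg\phi=\deg Y\cdot\deg\phi\leq (l+m)^4=12$ gives $3\leq\deg\phi\leq 6$.

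Next comes the geometric reduction. A smooth quadric fourfold is the Pl\"ucker $\mathrm{Gr}(2,4)$ and carries two three-dimensional rulings by planes $\mathbb{P}^2$; projection from a general plane $P_0$ of one ruling realizes $Y$ as a rational fibration over $\mathbb{P}^2$ whose fibre over a general point is the residual plane $P'$. Composing with $\phi$, I obtain a rational fibration $f:X\dashrightarrow\mathbb{P}^2$ whose general fibre $\Sigma=\phi^{-1}(P')$ is a surface mapping finitely, of degree $\deg\phi$, onto $\mathbb{P}^2$. Passing to a smooth model $\tilde{f}:\widetilde{X}\to\mathbb{P}^2$ and using that $K_{\widetilde{X}}$ is effective and $\tau$-exceptional while $\tilde f$ is a fibration, the general fibre satisfies $K_\Sigma=\mathcal{O}_\Sigma$, so $\Sigma$ is birational to an abelian or a $K3$ surface.

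The heart is then a Hodge-theoretic obstruction applied to the restriction $r:H^2(X,\mathbb{Q})_{\mathrm{tr}}\to H^2(\Sigma,\mathbb{Q})$. As this transcendental Hodge structure is simple, $r$ is either injective or zero. If $r$ is injective, the rank $b_{2,\mathrm{tr}}=21$ maximal Mumford--Tate Hodge structure embeds into $H^2(\Sigma)$; the transcendental part of $H^2$ of an abelian surface has rank at most $5$, so $\Sigma$ must be $K3$ with $H^2(\Sigma)_{\mathrm{tr}}\cong H^2(X)_{\mathrm{tr}}$, forcing the period of the fibres to be constant over the simply connected base $\mathbb{P}^2$; the family is then isotrivial, $\widetilde{X}$ is birational to $\mathbb{P}^2\times S$, and $\sigma_X$ would have rank $2$, contradicting its nondegeneracy. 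If $r=0$, the Leray filtration of $\tilde{f}$ forces $H^2(X)_{\mathrm{tr}}$ into $H^1(\mathbb{P}^2,R^1\tilde{f}_*\mathbb{Q})$ (the $R^0$-piece being of Tate type and the $R^2$-piece killed by $r$); for $K3$ fibres $R^1\tilde{f}_*\mathbb{Q}=0$, whence $H^2(X)_{\mathrm{tr}}=0$, absurd, while for abelian fibres the weight-one variation has rank $4$ and the Kuga--Satake obstruction of \cite{vGvoisin}, used exactly as in Lemma \ref{levgeemen}, would require $g\geq 2^{\lfloor (b_{2,\mathrm{tr}}-3)/2\rfloor}=2^{9}$, impossible for a surface. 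Either way $Y$ cannot be a quadric.

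The hard part will be making the last two steps rigorous through the base locus of the rational projection: the fibration $f$ is only a rational map, so one must run the argument on the resolution $\widetilde{X}$ and control the contribution of the exceptional locus, invoking Deligne's global invariant cycle theorem precisely as in the proof of Lemma \ref{levgeemen}. As a cleaner alternative in the extreme case, one can attempt to force $\deg\phi=3$ from the degree analysis; then $X\to Y$ (which coincides with its own normalization) is a quasi-finite degree-$3$ morphism to a normal fourfold whose regular locus has big anticanonical bundle, and since the pseudoeffective cone of $X$ is generated by the two nef classes $L,M$ every big divisor on $X$ is ample, so Proposition \ref{prodeg3} applies and yields the contradiction directly.
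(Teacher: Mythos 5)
Your reduction of the quadric to a fibration over $\mathbb{P}^2$ by residual planes is geometrically sound (at least for a smooth quadric; you would still need to treat the rank-$5$ cone, which has a single family of planes, all through the vertex), but the two steps that carry the argument both break down. First, you cannot conclude $K_\Sigma=\mathcal{O}_\Sigma$ for the general fibre: adjunction gives $K_\Sigma=K_{\widetilde{X}|\Sigma}$, and $K_{\widetilde{X}}$ is an effective divisor supported on the $\tau$-exceptional locus, which may perfectly well dominate $\mathbb{P}^2$; in that case $K_\Sigma$ is a nonzero effective divisor and $\Sigma$ need not be K3 or abelian. (In Lemma \ref{leelliptic} the implication runs the other way: one \emph{assumes} the fibre has trivial canonical bundle and deduces that it avoids the exceptional locus.) Second, and more seriously, your case ``$r=0$ with abelian fibres'' is exactly the configuration of a Lagrangian fibration over $\mathbb{P}^2$, and this is not obstructed by maximality of the Mumford--Tate group: the Kuga--Satake bound of \cite{vGvoisin} invoked in Lemma \ref{levgeemen} applies when $H^2(X,\mathbb{Q})_{\rm tr}$ embeds in $H^1(C)\otimes H^1(\widetilde{B})$ for a \emph{fixed} curve or variety $C$ (the isotrivial situation), not to $H^1(\mathbb{P}^2,R^1\tilde{f}_*\mathbb{Q})$ for a genuinely varying weight-one family. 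Hyper-K\"ahler fourfolds with $b_2(X)_{\rm tr}=21$ and maximal Mumford--Tate group do admit Lagrangian fibrations over $\mathbb{P}^2$ with abelian surface fibres, so the contradiction you claim would prove too much; excluding this configuration must use the specific hypotheses on $L,\,M$ (in particular $h^0(X,L)=0$), not Hodge theory alone. Your fallback also fails: the degree analysis only gives $3\leq\deg\phi_{L+M}\leq 6$, and since $\deg Y\cdot\deg\phi_{L+M}=2\deg\phi_{L+M}<12$ unless $\deg\phi_{L+M}=6$, one cannot conclude (as in Lemma \ref{leeasyC}, where $4\cdot3=12$) that $\phi_{L+M}$ is a morphism; without quasi-finiteness Proposition \ref{prodeg3} is not applicable.

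For comparison, the paper's proof is entirely projective-geometric and does use the special hypotheses: it restricts the linear systems $|L+M|$, $|2L+M|$, $|3L+2M|$ to the mobile curve $X_C$ lying over a general conic $C\subset Y$, uses the dimension bounds from \cite{DHMV} together with the Hopf lemma applied to multiplication maps to show that $\phi_{2L+M}$ factors through $Y$, deduces the relation $2L+M=2(L+M)-E$ with $E\in|M|$ contracted by $\phi_{L+M}$ to a surface $W\subset Y$, and reaches a contradiction by comparing $h^0(X,2L+M)=10$ with the number of conditions that a linearly nondegenerate surface imposes on quadrics. You would need an input of this kind to close the Lagrangian case.
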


\begin{proof} If $Y$ is a quadric,  the general plane section $C$ of $Y$, defined by a $3$-dimensional vector subspace $W_3\subset H^0(\mathbb{P}^5,\mathcal{O}_{\mathbb{P}^5}(1))=H^0(X,L+M)$  is a smooth conic, as otherwise $Y$ is singular in codimension $1$ hence is reducible. We thus have $C\cong \mathbb{P}^1$ and   denote by $\mathcal{O}_{\mathbb{P}^1}(1)$ the degree $1$ line bundle on $C$. We recall from \cite[Proof of proposition 6.4]{DHMV} that, under our assumptions on $X,\,L,\,M$, assuming that $Y$ is a fourfold, and  given a general plane section $C$ of $Y$, the  mobile part $X_C$ of $\phi_{L+M}^{-1}(C)$, or equivalently  the Zariski closure of the  locus in $X\setminus{{\rm BL}(L+M)}$ which is defined by $W_3$,  is an irreducible  curve with the following properties. We denote below $\phi_{L+M,C}:X_C\rightarrow C$ the restriction of $\phi_{L+M}$ to $X_C$.

\begin{enumerate} \item ${\rm dim}\,H^0(X,L+M)_{\mid X_C}=3$.
\item \label{item2pour2} ${\rm dim}\,H^0(X,2L+M)_{\mid X_C}=5$ or $4$, and in the second case, $\phi_{2L+M}(X_C)$ is a rational cubic curve in $\mathbb{P}^3$.

    \item ${\rm dim}\,H^0(X,3L+2M)_{\mid X_C}\leq 8$.
    \end{enumerate}
   (a)   If ${\rm dim}\,H^0(X,2L+M)_{\mid X_C}=5$, denoting by $W_5$ the space $H^0(X,2L+M)_{\mid X_C}$, and by $W_3\cong {\rm Sym}^2W_2$ the space $H^0(X,L+M)_{\mid X_C}$, with $W_2:=H^0(C, \mathcal{O}_{\mathbb{P}^1}(1))$,  we study the multiplication maps
     $$\mu: W_2\otimes W_5\rightarrow H^0(X_C,(2L+M)_{\mid X_C}\otimes \phi_{L+M,C}^*\mathcal{O}_{\mathbb{P}^1}(1))$$
     with image $W'$, and
     $$\mu': W_2\otimes W'\rightarrow H^0(X,3L+2M)_{\mid X_C}$$
     with ${\rm rank}\,\mu'\leq8$.
     We get by Hopf lemma applied to both multiplication maps that ${\rm dim}\,W'=6$ or ${\rm dim}\,W'=7$. In the first case, the equality in Hopf lemma is satisfied, and in the second case, the equality in the Hopf lemma is satisfied. In both cases, we conclude that \begin{eqnarray}\label{eqpourW5} W_5=\phi_{L+M}^*H^0(C,\mathcal{O}_{\mathbb{P}^1}(4))=
     \phi_{L+M}^*H^0(C,\mathcal{O}_C(2)).
     \end{eqnarray}
      It follows that the rational morphism
     $\phi_{2L+M}:X\dashrightarrow\mathbb{P}^9$ factors rationally  through $Y$. Furthermore, the linear system $|2L+M|$ has no fixed component, as we already explained in the previous proof. We also observe that the quadric $Y$   must be of rank at least $5$, otherwise it would have  many reducible hyperplane sections, and   $X$ would contain  reducible divisors in $|L+M|$. It follows that ${\rm Pic}(Y\setminus{\rm Sing}\,Y)=\mathbb{Z}\mathcal{O}_Y(1)$.
     These facts, together with the equality (\ref{eqpourW5}) imply that we have an equality of divisors in $X$
     \begin{eqnarray}\label{eqnarray} 2L+M=2(L+M)-E,
     \end{eqnarray}
     where $E$ is an effective  divisor in $X$ contracted by $\phi_{L+M}$. Thus $E$ belongs to $|M|$ and must be irreducible and contracted  by $\phi_{L+M}$ to  an irreducible subvariety $W$ of $Y$. Furthermore, this equality
      induces an equality of spaces of sections
       \begin{eqnarray}\label{eqnarraysections} H^0(X,2L+M)=H^0(Y,\mathcal{O}_Y(2)\otimes\mathcal{I}_W).
     \end{eqnarray}
      As $H^0(X,2L+M)$ is of dimension $10$ (see \cite{DHMV}), $W$ imposes at most  $11$ conditions to the quadrics. On the other hand, $W$ must generate linearly at least a $\mathbb{P}^4$. Otherwise, $W\subset \mathbb{P}^3$ and $Y$ is swept-out by linear sections containing $W$. Thus  there would be reducible divisors in $|L+M|$, namely inverse images of general hyperplane sections of $Y$ containing $W$, which contain $E$ and a mobile  component, contradicting our assumptions. Finally $W$ cannot be a curve.   Otherwise this curve would have degree at least $4$ and  the map $E\rightarrow W$ would have $2$-dimensional fibers of class $F$;  choosing $3$ general points on the curve $W$ and two general hyperplanes in $\mathbb{P}^5$ containing these $3$ points, we would conclude that $(l+m)^2-3F$ is effective in $X$, which is  excluded by  Lemma \ref{nolmsur2crucial}.
      An irreducible linearly nondegenerate surface $W$ in $\mathbb{P}^4$ or $\mathbb{P}^5$  imposes at least $12$ conditions to quadrics (as the only irreducible linearly nondegenerate surface in $\mathbb{P}^4$ contained in three quadrics is a cubic scroll, residual of a plane in the complete intersection of two quadrics, see \cite[p 50]{harrisnotes}), and this is a contradiction.

     \vspace{0.5cm}

     (b) The other case, where ${\rm dim}\,H^0(X,2L+M)_{\mid X_C}=4$, and  $\phi_{2L+M}(X_C)$ is a rational cubic curve in $\mathbb{P}^3$, is still easier. Indeed, we prove as above (see \cite{DHMV}) that the rational map $\phi_{2L+M}$ factors through $\phi_{L+M}$, and thus there is a linear system on $Y$ which is of degree $3$ on the plane sections $C$ of $Y$. This is impossible under our assumptions since as we argued above, the quadric $Y$  has rank at least $5$ hence $Y$ has  cyclic Picard group  generated by $\mathcal{O}_Y(1)$.
\end{proof}
\begin{lemm}\label{le63fold} The image $Y\subset \mathbb{P}^5 $  of $\phi_{L+M}$ is not a threefold of degree $6$.
\end{lemm}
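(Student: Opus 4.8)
The plan is to reduce first, by a purely numerical argument (this is the content of Claim \ref{claimpourle63}), to a fibration into genus-$3$ curves, and then to contradict that configuration using Lemma \ref{nolmsur2crucial} or the linear-system techniques of \cite{DHMV}. Since $\dim Y=3$, the general fiber $F$ of $\phi_{L+M}$ is a curve, and by the discussion of possibility \ref{item3fold} above one has $6[F]+e=(l+m)^3$ for some pseudoeffective class $e$. Intersecting with $l+m$ and using $(l+m)^4=12$ gives $6\,(l+m)\cdot F+(l+m)\cdot e=12$; as $L+M$ is ample, $(l+m)\cdot F\geq 1$ and $(l+m)\cdot e\geq 0$, so either $(l+m)\cdot F=1$ (with $(l+m)\cdot e=6$) or $(l+m)\cdot F=2$ (forcing $e=0$, since $L+M$ is ample). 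Cutting $X$ with two general members of $|L+M|$ produces a smooth surface $S$ with $[S]=(l+m)^2$, $K_S=2(L+M)_{\mid S}$, fibered by the curves $F$ over the degree-$6$ curve $\Gamma=Y\cap\mathbb{P}^3$; as $F^2=0$ on $S$, adjunction gives $2g(F)-2=K_S\cdot F=2\,(l+m)\cdot F$, so $g(F)=(l+m)\cdot F+1$. Proposition \ref{theofibgen} forces $g(F)\geq 3$, which excludes $(l+m)\cdot F=1$. Hence $(l+m)\cdot F=2$, $e=0$, $g(F)=3$, and, pairing against $l$ and $m$ with \eqref{eqpourinfibdr} and \eqref{eqpourinfibdrsupinfo}, $[F]=\tfrac16(l+m)^3$ with $L\cdot F=M\cdot F=1$.

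Second, I would record the rigidity this produces. On each fiber $K_F=K_{S\mid F}=2(L+M)_{\mid F}$ has degree $4$, so $(L+M)_{\mid F}$ is a degree-$2$ square root of $K_F$; since $F$ is non-hyperelliptic by Proposition \ref{theofibgen}, its unique section (coming from the contraction of $F$ by $\phi_{L+M}$) shows $(L+M)_{\mid F}$ is an odd theta-characteristic with $h^0=1$, while $(2L+M)_{\mid F}$, of degree $3$, is a $g^1_3$ realizing the trigonal structure. At this point the techniques of \cite{DHMV} used in Lemmas \ref{le3sur3dim4} and \ref{leeasyD} become available: studying the restrictions of $|L+M|$, $|2L+M|$ and $|3L+2M|$ to $S$ and to the fibers, together with the known values $h^0(X,L+M)=6$ and $h^0(X,2L+M)=10$, one controls how $\phi_{2L+M}$ and $\phi_{3L+2M}$ interact with $\phi_{L+M}$ on $S$.

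The main obstacle is the final contradiction. It cannot appeal to Question \ref{question1} (degree-$3$ maps of fourfolds are not excluded, and Proposition \ref{prodeg3} requires dimension $\geq 8$) nor to a negative answer to Question \ref{question2}, so it must use the specific classes $L,M$ and the hypothesis $h^0(X,L)=0$. I see two plausible endgames. The first is to produce a surface $\Sigma\subset X$ with $(l+m)^2-3[\Sigma]$ pseudoeffective, which Lemma \ref{nolmsur2crucial} forbids: a natural candidate is $\Sigma=\overline{\phi_{L+M}^{-1}(\ell)}$ for a curve $\ell\subset Y$ of small degree in a covering family, for which one computes $(l+m)^2\cdot[\Sigma]=2\deg\ell$, so that $(l+m)^2-3[\Sigma]$ pairs nonnegatively with $(l+m)^2$ as soon as $\deg\ell\leq 2$; the difficulty is to promote this numerical inequality to genuine pseudoeffectivity. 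The second is a divisibility contradiction in the spirit of Lemma \ref{le3sur3dim4}: if the linear-system analysis above forces the restriction of $|2L+M|$ to a suitable multisection of $S\to\Gamma$ to be pulled back from $\Gamma$, then $\deg(2L+M)_{\mid F}=3$ must be divisible by the multisection degree, which one arranges to be impossible. In either case the crux is the projective classification of the degree-$6$ threefold $Y$ subject to being rationally connected, having $0$-dimensional indeterminacy preimage, and having no reducible hyperplane section — this last being forced by $h^0(X,L)=0$, since a covering family of reducible hyperplane sections of $Y$ would pull back to reducible members of $|L+M|$.
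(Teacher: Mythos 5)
Your first step --- reducing to the case where $\phi_{L+M}$ is a fibration in genus-$3$ curves of class $\tfrac{1}{2}(L^2M+LM^2)$ with $e=0$ --- is essentially Claim \ref{claimpourle63} of the paper, and your derivation is acceptable up to one caveat: the identity $F^2=0$ on $S$ presupposes that the restricted pencil on $S$ has no base curve, which you do not know before having established $e=0$; the paper sidesteps this by using only the inequality ${\rm deg}\,K_F\leq 3(l+m)\cdot f$, valid when $F$ is merely an irreducible component of a complete intersection of three members of $|L+M|$. The genuine gap is everything after that: you say explicitly that you do not have the final contradiction, and neither of your two proposed endgames is carried out (you yourself flag the unproved passage from a numerical inequality to pseudoeffectivity in the first, and the second rests on a factorization through $\Gamma$ that you have not established). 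As it stands, the proposal proves only the preliminary claim, not the lemma.

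For comparison, the paper's contradiction is a local analysis at the indeterminacy locus, which your sketch never considers. Since $d=6$ forces $e=0$, the base locus of $|L+M|$ is $0$-dimensional; since $K_{\widetilde{X}}\cdot F=4$, every exceptional component occurs in $K_{\widetilde{X}}$ with multiplicity at least $3$, and the image of $F$ passes through every indeterminacy point, there is a single indeterminacy point $x$. One then shows that no section of $L+M$ vanishes at $x$ to order $3$ (because $\tau^*(L+M)\cdot F=2$), that the evaluation map $H^0(X,L+M)\rightarrow \Omega_{X,x}\otimes(L+M)$ has rank exactly $1$, cutting out a hyperplane $V_x\subset T_{X,x}$, and that the sections with vanishing differential at $x$ give a $5$-dimensional space of quadrics on $\mathbb{P}(V_x)\cong\mathbb{P}^2$. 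Restricting to a general $X_s\in|L+M|$ smooth at $x$ and blowing up $x$, the map induced on the exceptional divisor $E_{x,s}\cong\mathbb{P}^2$ by three general sections is given by a net of conics, hence is generically finite of degree at most $4$; but it factors through the degree-$6$ surface $Y_s$ followed by a general linear projection, which has degree $6$. You would need to supply an argument of this kind, or actually complete one of your two endgames, for the lemma to be proved.
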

\begin{proof}   First of all we prove
\begin{claim}\label{claimpourle63} Assume ${\rm Pic}\,X$ is generated by $L$ and $M$ with $L$ and $M$ nef isotropic, and the image $Y\subset \mathbb{P}^5 $  of $\phi_{L+M}$ is  a threefold of degree $d=4,\,5$ or $6$. Then the  general fiber $F$ of $\phi_{L+M}$ is  a  curve of class $\frac{1}{2}(L^2M+M^2L)$, and it  is of genus $3$ when $d=6$.
\end{claim}
As before, by ``general fiber of $\phi_{L+M}$'', we mean ``general fiber of a desingularization $\tilde{\phi}_{L+M}:\widetilde{X}\rightarrow Y$ of $\phi_{L+M}$.
\begin{proof}[Proof of Claim \ref{claimpourle63}]  Using the fact that $L+M$ is ample, and arguing as in \cite{ogradyccm}, the image $f$ of $F$ in the group of $1$-cycles of $X$ modulo numerical equivalence (or in $H_2(X,\mathbb{Z})$) satisfies
\begin{eqnarray}\label{equationpourfibre3} df=(l+m)^3-e=3(l^2m+m^2l)-e,\end{eqnarray}
where the class $e$ is the class of a pseudoeffective $1$-cycle.
Under our assumptions on $L,\,M$, the group of pseudoeffective $1$-cycles is contained in the cone generated over  $\mathbb{Q}$
by $l^2m$ and $lm^2$. Furthermore, the classes $\frac{1}{2}l^2m$ and $\frac{1}{2}lm^2$ are integral and any integral cohomology class in $\langle l^2m,m^2l\rangle_{\mathbb{Q}}$ is an integral combination of $\frac{1}{2}l^2m$ and $\frac{1}{2}lm^2$ as one sees by intersecting them with $L$ and $M$.
It now follows from (\ref{equationpourfibre3}) with $d\geq 4$ that one of the following possibilities holds
\begin{eqnarray}\label{equationpourfibre33} f=\frac{1}{2}(l^2m+m^2l),\,f=\frac{1}{2}l^2m,\,\,f=\frac{1}{2}m^2l.
\end{eqnarray}
Next we observe that the image of $F$ in $X$ is an irreducible component of the intersection of three members of $|L+M|$, and it follows by adjunction that
\begin{eqnarray}\label{eqtardiverevision}{\rm deg}\,K_F\leq 3(l+m) f.\end{eqnarray}
If $f=\frac{1}{2}l^2m$ or $f=\frac{1}{2}m^2l$, we get from (\ref{eqtardiverevision}) that ${\rm deg}\,K_F\leq 3$, that is, $F$ is  of genus $0,\,1$ or $2$, which contradicts Proposition   \ref{theofibgen}. Hence we conclude that $f=\frac{1}{2}(l^2m+m^2l)$, which proves the first statement. When $d=6$, we have $e=0$ by the inequality   (\ref{equationpourfibre3}) and it follows as in \cite{ogradyccm} that the base locus of $|L+M|$ consists of isolated points.  The equality in (\ref{eqtardiverevision}) would  imply that the holomorphic Euler-Poincar\'{e} characteristic $\chi(\widetilde{Z},\mathcal{O}_{\widetilde{Z}})$ equals the holomorphic Euler-Poincar\'{e} characteristic $\chi(Z,\mathcal{O}_Z)$, where
$Z$ is the complete intersection of three members of $|L+M|$ and $\widetilde{Z}$ is its normalization. This is not possible since the normalization map $\widetilde{Z}\rightarrow Z$ is nontrivial because $Z$ is connected and $\widetilde{Z}$ is not connected, hence we conclude that the inequality is strict in (\ref{eqtardiverevision}). Hence we get in this case ${\rm deg}\,K_F<6$ and so the genus of $F$ is at most $3$, hence equal to $3$ by Proposition   \ref{theofibgen}.
\end{proof}
  We now concentrate on the case of degree $d=6$. As noted above,  the indeterminacy locus of $\phi_{L+M}$ consists of isolated points.
We first prove
\begin{claim} \label{claimtrivalonF} There is a single indeterminacy point $x\in X$.
\end{claim}
\begin{proof} Let $\tau : \widetilde{X}\rightarrow X,\,\tilde{\phi}_{L+M}:\widetilde{X}\rightarrow Y$ be a resolution of indeterminacies of $\phi_{L+M}$. As we know that $\tau$ has rank at most $1$ over  the indeterminacy points $x_1,\ldots,\,x_N$, each irreducible component of the canonical divisor $K_{ \widetilde{X}}$ of $\widetilde{X} $, defined as the zero-locus of the form $\tau^*\sigma_X^4$, appears with multiplicity at least $3$. If $F\subset \widetilde{X}$ is a general fiber, we know by Claim \ref{claimpourle63} that $K_{ \widetilde{X}}\cdot F=4$, hence it follows that $F$ meets a single irreducible component of $K_{ \widetilde{X}}$. This implies that $N=1$, as  the image of $F$ in $X$ passes through all indeterminacy points $x_i$.
\end{proof}
We now examine the order of vanishing of sections of $L+M$ at $x$.
\begin{claim}\label{claimavecnomtardif123} (i) There is no section of $L+M$ vanishing at $x$ to order $3$ or more.

(ii)  There exists a section of $L+M$ whose zero set is nonsingular at $x$. The rank of the evaluation map $e_x: H^0(X,L+M)\rightarrow \Omega_{X,x}\otimes (L+M)$ is exactly $1$.

(iii) Let $V_x\subset T_{X,x}$ be the hyperplane defined by any  linear form in ${\rm Im}\,e_x$. Then the rank of the evaluation map $H^0(X,L+M)\rightarrow {\rm Sym}^2 V_x^*\otimes (L+M)$ is $5$.
\end{claim}
\begin{proof} (i) We have $\tau^*(L+M)\cdot F=2$ by Claim \ref{claimpourle63}. If a section of $L+M$ vanishes to order $\geq3$, it thus vanishes on all the curves $\tau(F)$, hence on $X$.

\vspace{0.5cm}

(ii) If all sections of $L+M$ vanish to order $\geq 2$ at $x$, the local intersection number at $x$ of $4$ sections of $L+M$ forming a regular sequence is at least $16$, contradicting the fact that $(L+M)^4=12$. Suppose now that there are two sections $s,\,s'$ of $L+M$ with independent differentials at $x$. Choosing them general, they define a smooth surface $S\subset X$ passing through $x$. This surface is swept out by curves $\tau(F)$ contained in it, hence it follows by the same argument as before that a nonzero section in  $H^0(X,L+M)_{\mid S}$ cannot vanish at order $\geq 3$ at $x$. There cannot be a  complete intersection of three sections of $L+M$ which is smooth at $x$, (since there are at least $6$ curves $F_i$ passing through $x$ in such complete intersection), hence any section in  $H^0(X,L+M)_{\mid S}$ vanishes to order $\geq2$  at $x$. The space $H^0(X,L+M)_{\mid S}$ is $4$-dimensional and the evaluation map
$$e_{x,S}:H^0(X,L+M)_{\mid S}\rightarrow {\rm Sym}^2\Omega_{S,x}\otimes (L+M)$$
has rank at most $3$. Hence $e_{x,S}$ has a non trivial kernel providing a section whose restriction to $S$ is nonzero and vanishes to order $3$ at $x$. This contradiction proves (ii).

\vspace{0.5cm}

(iii) The argument is the same as before, since, denoting by $X_s\subset X$ the zero locus of a general section $s$ of  $L+M$ (so that $V_x=T_{X_s,x}$), any element of $H^0(X,L+M)_{\mid X_s}$ has $0$ differential at $x$ but cannot vanish to order $\geq 3$ at $x$. The conclusion thus follows from the fact that $H^0(X,L+M)_{\mid X_s}$ has dimension $5$.
\end{proof}
A  contradiction arises as follows: the $5$-dimensional space of quadrics on $\mathbb{P}(V_x)$ given by Claim \ref{claimavecnomtardif123}(iii) either has no base point, or is the space of quadrics vanishing at a point $u\in \mathbb{P}(V_x)$. In both cases, if we take three general sections of $H^0(X,L+M)_{\mid X_s}$, they provide a rational map
$X_s\dashrightarrow \mathbb{P}^2$ that is undefined only at $x$, at which  the three sections vanish at order $2$.
Blowing-up $x$ in $X_s$, and denoting $E_{x,s}$ the exceptional divisor over $x$, we get sections
of $(L+M)_{\mid X_s}(-2E_{x,s})$. The restricted rational map $\phi_{L+M\mid E_{x,s}}:E_{x,s}\dashrightarrow \mathbb{P}^2$  is given by a general  linear system of quadrics vanishing at one point in the second case, or by a linear system of quadrics without base points in the first case. It is thus generically finite of degree $\leq 4$. This contradicts however the fact that it factors as the composition of  the dominant rational map
$$  E_{x,s} \dashrightarrow Y_s\rightarrow  \mathbb{P}^2$$
where $Y_s$ is the hyperplane section of $Y$ defined by $s$ and the second map is a general linear   projection, hence has degree $6$.
\end{proof}

Combining Lemmas \ref{leeasyA}, \ref{leeasyB}, \ref{leeasyC}, \ref{leeasyD} and  \ref{le63fold} we find that, in order to
prove Theorem \ref{propLMnef3}, we only have to prove the following Proposition \ref{procubquatre},  which eliminates the case where the image is a cubic hypersurface    and Proposition \ref{proquatrecinqtrois}, which excludes the cases where $Y$ is a $3$-fold of degree $4$ or $5$ in $\mathbb{P}^5$.

\begin{prop} \label{procubquatre} The image   $Y=\phi_{L+M} (X )\subset  \mathbb{P}^5$ cannot be
 a cubic hypersurface.
\end{prop}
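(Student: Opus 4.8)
The plan is to transport the ramification-and-isotropy argument of Proposition \ref{prodeg3} to the present \emph{rational} setting. By Lemma \ref{le3sur3dim4}, if $Y$ is a cubic hypersurface then $\phi:=\phi_{L+M}$ has degree $3$, so that ${\rm deg}\,Y\cdot{\rm deg}\,\phi=9<12=(L+M)^4$. Since for an ample $L+M$ equality in $(L+M)^4\geq{\rm deg}\,Y\cdot{\rm deg}\,\phi$ holds exactly when $\phi$ is a morphism, $\phi$ must have a nonempty base scheme $Z={\rm Bs}\,|L+M|$; as $h^0(X,L)=0$ forces $|L+M|$ to have no fixed component, $Z$ has codimension $\geq 2$. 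This is precisely why Proposition \ref{prodeg3}, stated for quasi-finite morphisms, does not apply verbatim, and handling the indeterminacy of $\phi$ will be the main difficulty. I first record that $Y$ is normal: it cannot be singular in codimension $1$ (otherwise it has reducible hyperplane sections, whose inverse images would be reducible members of $|L+M|$, against $h^0(X,L)=0$ and the fact that the pseudoeffective cone is $\langle L,M\rangle$), and a hypersurface regular in codimension $1$ is normal. Hence by adjunction $-K_{Y_{\rm reg}}=\mathcal{O}_Y(3)_{\mid Y_{\rm reg}}$ is ample, in particular big.

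Next I would set up the ramification divisor. Let $\tau:\widetilde X\to X$ resolve the indeterminacies of $\phi$, with $\tilde\phi:\widetilde X\to Y$ a morphism that is generically $3:1$. From $K_{\widetilde X}=\tilde\phi^*K_Y+R_{\tilde\phi}$, $K_X=\mathcal{O}_X$, and $\tilde\phi^*\mathcal{O}_Y(1)=\tau^*(L+M)-E$ with $E$ supported on the $\tau$-exceptional locus, one computes $[\tau_*R_{\tilde\phi}]=3(l+m)$. Thus the ramification divisor $\overline R:=\tau_*R_{\tilde\phi}$ is effective of class $3(l+m)$, hence ample, because $L+M$ is ample and big divisors on $X$ are ample under our hypotheses (as in Lemma \ref{leeasyC}). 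On $X^0:=X\setminus(Z\cup\phi^{-1}(Y_{\rm sing}))$, where $\phi$ restricts to a finite degree-$3$ morphism $f:X^0\to Y_{\rm reg}$, I define the residual divisor $R':=f^{-1}(f(R))-2R$, which is nonempty since its image is $f(R)$, and let $\overline{R'}$ be its closure. The intersection $S:=R\cap R'$ consists of points $x$ whose fibre $f^{-1}(f(x))$ has length $3$ at $x$, so that $3x$ is the pull-back of a $0$-cycle on $Y$; exactly as in Lemma \ref{lenomdelemmesansnom}, Mumford's theorem \cite{mumford} together with $h^{2,0}(\widetilde Y)=0$ shows that $\sigma_X$ restricts to $0$ on any desingularization of $S$, i.e. $S$ is isotropic.

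I would then globalise, claiming that $\overline R\cap\overline{R'}$ is isotropic for $\sigma_X$. Away from $Z\cup\phi^{-1}(Y_{\rm sing})$ this is the statement about $S$ above. Over $Y_{\rm sing}$ one argues as in Lemma \ref{le2pour34}: a general point of a $2$-dimensional component of $\phi^{-1}(Y_{\rm sing})$ has a length-$3$ fibre supported at a single point, so Mumford's theorem again forces isotropy. The delicate remaining point is the contribution of the base locus $Z$: since ${\rm codim}\,Z\geq 2$ it produces no divisorial component of $\overline R$, but I must check that the curves of $\overline R\cap\overline{R'}$ lying over $Z$ are isotropic as well, using the blow-up bookkeeping of the previous step. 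This is the technical heart of the argument, and the feature genuinely absent from the morphism situation of Proposition \ref{prodeg3}. Granting it, $\overline R\cap\overline{R'}$ is isotropic; since $X$ carries no isotropic divisor (a symplectic form restricts to a nonzero, rank-$2$ form on any threefold), it has dimension $\leq 2$, hence exactly $2$, being the intersection of the nonzero effective divisors $\overline R$ and $\overline{R'}$.

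Finally the contradiction is obtained exactly as in Proposition \ref{prodeg3}. On a desingularization $\nu:\widetilde{\overline{R'}}\to\overline{R'}$ the form $\nu^*(\sigma_{X\mid\overline{R'}})$ is a nonzero holomorphic $2$-form $\overline\sigma$, again because no divisor of $X$ is isotropic; the class $\overline R_{\mid\overline{R'}}$ is ample, so its pull-back to $\widetilde{\overline{R'}}$ is big, yet the surface $\overline R\cap\overline{R'}$ is isotropic for $\overline\sigma$. A nonzero holomorphic $2$-form on a projective threefold is closed of generic rank $2$, hence descends through its rank-$1$ kernel foliation; a big divisor cannot be isotropic for it, since bigness forces it to dominate the $2$-dimensional leaf space while isotropy forces the image to have dimension $\leq 1$. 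This excludes the cubic case and proves the proposition. I expect the base-locus analysis in the globalisation step to be the principal obstacle, precisely because it is the point at which $\phi_{L+M}$ fails to be the quasi-finite morphism to which Proposition \ref{prodeg3} directly applies.
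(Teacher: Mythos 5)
Your strategy is entirely different from the paper's, which handles the cubic case not by ramification divisors but by showing (Lemmas \ref{le1pourcubicdim4}--\ref{le3pourcubicdim4}) that the factorization of $\phi_{2L+M}$ through $\phi_{L+M}$ forces a line bundle $\mathcal{L}$ of degree $5$ on the plane sections of $Y$, hence a linear $\mathbb{P}^3\subset Y$ and ultimately a nonzero section of $L$, contradicting $h^0(X,L)=0$; the possible dimensions of ${\rm Sing}\,Y$ are then analyzed one by one. More importantly, your proof is not complete: you yourself write ``Granting it'' at the step you call the technical heart, namely the isotropy of the part of $\overline R\cap\overline{R'}$ lying over the base locus $Z$. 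This is a genuine gap, not a routine verification: over a point of $Z$ the fiber of the resolved map $\tilde\phi$ meets the exceptional divisor of $\tau$, so it is no longer a length-$3$ subscheme of $X$ supported on $3$ honest points, and the Mumford-type argument (``$3x$ is the pull-back of a $0$-cycle of $Y$'') that drives Lemma \ref{lenomdelemmesansnom} simply does not apply there. Note also that $Z$ may have $2$-dimensional components, which could then be entire components of the surface $\overline R\cap\overline{R'}$ that your argument never touches, and whose contribution to the class $\overline R_{\mid \overline{R'}}$ you cannot discard.

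There is a second, unflagged gap: you assert that $\phi$ restricts to a \emph{finite} degree-$3$ morphism on $X\setminus(Z\cup\phi^{-1}(Y_{\rm sing}))$. Quasi-finiteness is not automatic for the map defined by an ample complete linear system with base points: such a map can contract divisors (the paper explicitly allows for a divisor $E$ contracted by $\phi_{L+M}$ in Lemma \ref{le2pourcubicdim4} and in the proof of Lemma \ref{leeasyD}). If a divisor is contracted, then the definition of the residual divisor $R'=f^{-1}(f(R))-2R$, the local-length count showing that $R\cap R'$ consists of points with fiber of length exactly $3$, and the class computation $[\overline R]=3(l+m)$ (which also presupposes that no divisor of $X$ lies over $Y_{\rm sing}$) all break down. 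This is precisely why the paper invokes Proposition \ref{prodeg3} only in the degree-$4$ fourfold case (Lemma \ref{leeasyC}), where $12=4\cdot 3$ forces $\phi_{L+M}$ to be a finite morphism; in the cubic case the inequality $9<12$ guarantees, as you correctly observe, that it is not a morphism, and the author abandons this route altogether rather than confronting the indeterminacy and contraction issues you would need to resolve.
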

We  establish  a few lemmas in order to  prove Proposition \ref{procubquatre}.  We first prove
\begin{lemm}\label{le1pourcubicdim4} If $Y$ is a cubic hypersurface, it cannot be singular in codimension $1$.
\end{lemm}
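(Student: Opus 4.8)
The plan is to argue by contradiction: assume $Y=\phi_{L+M}(X)$ is an irreducible reduced cubic fourfold whose singular locus has a component $\Sigma$ of dimension $3$, and derive that $|L+M|$ carries a reducible member, contradicting the hypotheses $h^0(X,L)=0$ and ${\rm Pic}(X)=\langle L,M\rangle$ with pseudoeffective cone generated by $L$ and $M$. The engine is the same as in Lemmas \ref{leeasyB}--\ref{leeasyD}: a reducible hyperplane section of $Y$ pulls back under $\phi_{L+M}$ to a reducible divisor in $|L+M|$, and no such divisor can exist, since any effective decomposition of the class $l+m$ in ${\rm Pic}(X)=\mathbb{Z}l\oplus\mathbb{Z}m$ would involve a summand of class $l$ or $m$, impossible as $h^0(X,L)=0$.

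The crucial step, which I expect to be the main obstacle, is to show that $\Sigma$ must be a linear subspace $\mathbb{P}^3\subset\mathbb{P}^5$. For this I would cut $Y$ by a general plane $P\cong\mathbb{P}^2$. Since $Y$ is irreducible of dimension $4$, the Bertini irreducibility theorem applied to the linear section of codimension $3$ shows that $\Gamma:=Y\cap P$ is an irreducible plane cubic. Every point of $\Sigma\cap P$ is a singular point of $\Gamma$, because a singular point of $Y$ stays singular on any linear section through it; and for general $P$ their number equals ${\rm deg}\,\Sigma$, as $\dim\Sigma=3$. But an irreducible plane cubic has arithmetic genus $1$, so $\sum_p\delta_p\leq 1$: it has at most one singular point, which is a node or a cusp (in particular not a triple point). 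Hence ${\rm deg}\,\Sigma=1$, so $\Sigma$ is a $\mathbb{P}^3$, and $Y$ has multiplicity exactly $2$ along $\Sigma$ (multiplicity $3$ would produce a triple point on $\Gamma$, i.e. three concurrent lines, contradicting irreducibility).

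Granting $\Sigma=\{x_4=x_5=0\}$, multiplicity $2$ along the smooth centre $\Sigma$ means $f\in(x_4,x_5)^2$, so $f=x_4^2\alpha+x_4x_5\,\epsilon+x_5^2\delta$ with $\alpha,\epsilon,\delta$ linear forms. I would then restrict to the pencil of hyperplanes $H_t=V(x_4-t\,x_5)$ containing $\Sigma$: substituting $x_4=t\,x_5$ gives $f|_{H_t}=x_5^2\,\ell_t$ with $\ell_t$ linear, so that inside $H_t\cong\mathbb{P}^4$ the hyperplane section $Y\cap H_t$ is the reducible divisor $2\Sigma+\Pi_t$, the doubled hyperplane $\Sigma$ together with a residual hyperplane $\Pi_t=V(\ell_t)$. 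These reducible sections moreover sweep out $Y$, since every point of $\mathbb{P}^5$ lies on some $H_t$.

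Finally I would pull back. On a resolution $\tilde\phi:\widetilde{X}\to Y$ the divisor of the pullback of the linear form defining $H_t$ lies in $|L+M|$ — the system has no fixed component, since by assumption it has no reducible divisors — and it splits as $2\,\tilde\phi^{-1}(\Sigma)+\tilde\phi^{-1}(\Pi_t)$, where $\tilde\phi^{-1}(\Sigma)$ is a nonzero effective divisor because $\Sigma$ has codimension $1$ in $Y$. Writing $[\tilde\phi^{-1}(\Sigma)]=a\,l+b\,m$ with $a,b\geq 0$ integers, the relation $l+m=2(a\,l+b\,m)+[\tilde\phi^{-1}(\Pi_t)]$ with pseudoeffective residue forces $a,b\leq\tfrac12$, hence $a=b=0$ and $\tilde\phi^{-1}(\Sigma)=0$, a contradiction. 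The only remaining care is the standard bookkeeping that pulling back the Cartier divisor $Y\cap H_t$ faithfully records the multiplicity-$2$ component along $\Sigma$ and that the base locus of $\phi_{L+M}$ (which is of codimension $\geq 2$, as $|L+M|$ has no fixed component) absorbs no divisor; both are routine.
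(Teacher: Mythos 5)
Your reduction to the normal form $f\in(x_4,x_5)^2$ via general plane sections is correct and is in fact a nice justification of something the paper asserts without proof (that a $3$-dimensional singular locus of an irreducible cubic must be a linear $\mathbb{P}^3$ along which $Y$ has multiplicity exactly $2$). The problem is in the last step, where you pull back the hyperplane sections $Y\cap H_t=2\Sigma+\Pi_t$. These reducible sections have $\Sigma$ as a \emph{fixed} component, independent of $t$, and for a fixed divisor of $Y$ the implication ``codimension $1$ in $Y$ $\Rightarrow$ nonzero effective class on $X$'' fails: the divisorial components of $\tilde\phi^{-1}(\Sigma)$ in a resolution $\tau:\widetilde X\to X$ could all be $\tau$-exceptional, i.e.\ $\Sigma$ could be swept out by the images under $\tilde\phi$ of the exceptional divisors lying over the indeterminacy locus of $\phi_{L+M}$ (a surface in $X$, since $(L+M)^4=12>9=\deg Y\cdot\deg\phi$ forces a nontrivial base locus). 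In that case $\tau_*[\tilde\phi^{-1}(\Sigma)]=0$, your relation degenerates to $l+m=\tau_*[\tilde\phi^*\Pi_t]$, and no contradiction with $h^0(X,L)=0$ arises. This is exactly why the paper, in Lemmas \ref{leeasyB} and \ref{le1pourcubicdim4}, insists on reducible hyperplane sections with \emph{at least two mobile} components: a divisor of $Y$ that moves in a family sweeping out $Y$ cannot have all its preimages contained in the exceptional locus, so both pieces survive pushforward and one gets a genuine effective decomposition of $l+m$. Concretely, the paper splits into the case where $Y$ is a cone over a cubic surface (handled by projecting to the surface and applying Lemma \ref{nolmsur2crucial} to three collinear fibers) and the case of the normal form (\ref{eqpourycas33}), where the sections $\{x_2=0\}\cap Y=\{x_1=x_2=0\}\cup\{x_0x_3+x_1x_4=x_2=0\}$ are moved by the ${\rm SL}(2)$-action so that both components are mobile. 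Your single pencil through $\Sigma$ cannot reproduce this.

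A secondary, more minor point: even when $\tilde\phi^{-1}(\Sigma)$ does contain a non-exceptional divisor, the coefficient $2$ need not survive pullback. Along $\Sigma$ the cubic $Y$ has two local branches, which may be exchanged by monodromy (e.g.\ $f=x_4^2x_0-x_5^2x_1$), in which case the normalization carries a single prime divisor mapping $2:1$ onto $\Sigma$ on which $x_4-tx_5$ vanishes to order $1$, not $2$. This does not hurt your intended conclusion (you only need the class to be a nonzero effective summand of $l+m$), but the assertion that the pullback ``splits as $2\,\tilde\phi^{-1}(\Sigma)+\tilde\phi^{-1}(\Pi_t)$'' is not literally correct as stated. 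The essential missing ingredient remains the mobility (or some substitute, such as the intersection-theoretic argument of Lemma \ref{nolmsur2crucial} applied to $\Sigma=Y\cap H_t\cap H_{t'}$) needed to rule out that $\overline{\phi^{-1}(\Sigma)}$ has codimension $\geq 2$ in $X$.
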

\begin{proof} If the singular locus of $Y$ has dimension $3$, it must be a $\mathbb{P}^3$ and, either $Y$ is a cone over a cubic surface, or the equation of $Y$ takes the form
\begin{eqnarray}\label{eqpourycas33} f_Y= x_0^2x_2+ x_0x_1x_3+ x_1^2 x_4, \end{eqnarray}
for an adequate choice of coordinates $x_i$, $x_0=x_1=0$ being the equations defining the $\mathbb{P}^3$ contained in ${\rm Sing}\,Y$. The first case is excluded as follows: If $Y$ is a cone over a cubic surface $S$, the linear projection $\pi:Y\dashrightarrow S$ from the vertex composes with $\phi_{L+M}$ to give a dominant rational map
$$\psi=\pi\circ \phi_{L+M}:X\dashrightarrow S$$
with general fiber $F_x$, $x\in S$.
For any general set $\{x_1,\,x_2,\,x_3\}$ of  three collinear points in $S$,
 the three surfaces $F_{x_i}$ are homologous in $X$ and satisfy
$[F_{x_1}]+[F_{x_2}]+[F_{x_3}]+e=(l+m)^2$ in $H^4(X,\mathbb{Z})$, where $e$ is the class of an effective surface in $X$, which contradicts Lemma \ref{nolmsur2crucial}. In the second case where $Y$ is defined by an equation $f_Y$ as in (\ref{eqpourycas33}),  $Y$ has  many reducible hyperplane sections. Indeed, in the above coordinates the hyperplane section
$\{x_2=0\}$ is the union of the two components $\{x_1=x_2=0\}\subset Y$, and $\{ x_0x_3+x_1x_4=x_2=0\}\subset Y$. Using the natural ${\rm SO}(3)$ (or ${\rm SL}(2)$) action on $Y$, it is easy to see that both components are mobile. Thus $X$ would have reducible members in $|L+M|$, which is excluded by assumption.
\end{proof}
By Lemma \ref{le1pourcubicdim4}, if $Y$  is a cubic hypersurface, the general plane sections $C:=P\cap Y$ are smooth elliptic plane curves.  We now choose a  desingularization $\widetilde{Y}$ of  $Y$, so that $Y_{\rm reg}\subset \widetilde{Y}$,  and  prove
\begin{lemm} \label{le2pourcubicdim4} If $Y$ is a cubic hypersurface, there exists a line bundle $\mathcal{L}$ on  $\widetilde{Y}$  such that
${\rm deg}\,\mathcal{L}_{\mid C}=5$ and the pull-back $\phi_{L+M}^*\mathcal{L}$ of $\mathcal{L}$ to $X$ satisfies
\begin{eqnarray}\label{eqpourpullbacl} 2L+M=\phi_{L+M}^*\mathcal{L}(-E),
\end{eqnarray}
for some effective divisor $E$ in  $X$ which is contracted by $\phi_{L+M}: X\dashrightarrow\widetilde{Y}$. Furthermore, the sections of $2L+M$ are pulled-back from sections of $\mathcal{L}$ on $\widetilde{Y}$. In particular
\begin{eqnarray}\label{eqpourpullbaclsec} h^0(Y_{\rm reg},\mathcal{L}_{\rm reg})\geq h^0(X,2L+M)=10
\end{eqnarray}
where $\mathcal{L}_{\rm reg}:=\mathcal{L}_{\mid Y_{\rm reg}}$.
\end{lemm}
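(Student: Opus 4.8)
The plan is to prove that the complete linear system $|2L+M|$ is pulled back from $Y$ through $\phi_{L+M}$; this produces $\mathcal{L}$ together with (\ref{eqpourpullbacl}) and (\ref{eqpourpullbaclsec}) at once, after which $\deg\,\mathcal{L}_{\mid C}$ is a numerical computation.

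First I would pass to a general plane section. By Lemma \ref{le1pourcubicdim4} the cubic $Y$ is normal with $Y_{\rm sing}$ of codimension $\geq 2$, and a general plane section $C=P\cap Y$ is a smooth elliptic plane cubic avoiding $Y_{\rm sing}$. Writing $C_X$ for the mobile part of $\phi_{L+M}^{-1}(C)$, the induced map $\phi_{L+M\mid C_X}\colon C_X\to C$ is finite of degree $3$, since $\deg\,\phi_{L+M}=3$ by Lemma \ref{le3sur3dim4}. Exactly as invoked in the proof of Lemma \ref{le3sur3dim4}, the results established in the proof of Proposition~6.4 and in Lemma~6.8 of \cite{DHMV} show that $\phi_{2L+M\mid C_X}$ factors through $\phi_{L+M\mid C_X}$. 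Now the fibre of $C_X\to C$ over a general $c\in C$ is the whole set $\phi_{L+M}^{-1}(c)$ of three points, so this factorization says precisely that $\phi_{2L+M}$ is constant on $\phi_{L+M}^{-1}(c)$. As the plane sections $C$ sweep out $Y$, every general point $y\in Y$ lies on such a $C$, hence $\phi_{2L+M}$ is constant on the general fibres of $\phi_{L+M}$ and factors as $\phi_{2L+M}=\psi\circ\phi_{L+M}$ for some rational map $\psi\colon Y\dashrightarrow\mathbb{P}^9$.

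Next I would extract $\mathcal{L}$ and establish (\ref{eqpourpullbacl})--(\ref{eqpourpullbaclsec}). Set $\mathcal{L}:=\psi^*\mathcal{O}_{\mathbb{P}^9}(1)$, a line bundle on $Y_{\rm reg}$ (using normality of $Y$). On the open locus where $\phi_{L+M}$ is a morphism into $Y_{\rm reg}$ one has $\phi_{L+M}^*\mathcal{L}=\phi_{2L+M}^*\mathcal{O}_{\mathbb{P}^9}(1)=\mathcal{O}_X(2L+M)$, the last equality because $|2L+M|$ has no fixed component (recalled in the proofs of Lemmas \ref{leeasyD}, \ref{le3sur3dim4}). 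The only way this identification can fail in codimension $1$ is along a divisor $E$ contracted by $\phi_{L+M}$ onto $Y_{\rm sing}$, where $\mathcal{L}$ is not a priori defined; the ambiguity in extending $\phi_{L+M}^*\mathcal{L}$ across $E$ yields (\ref{eqpourpullbacl}) with $E$ effective and $\phi_{L+M}$-contracted, that is $\phi_{L+M}^*\mathcal{L}=\mathcal{O}_X(2L+M+E)$. Since $\phi_{2L+M}$ is defined by the full system $|2L+M|$ and factors through $\psi$, the pullback $\phi_{L+M}^*\colon H^0(Y,\mathcal{L})\to H^0(X,2L+M+E)$ is injective (as $\phi_{L+M}$ is dominant) with image containing the $10$-dimensional subspace $s_E\cdot H^0(X,2L+M)$, where $s_E$ is the equation of $E$; together with $h^0(Y,\mathcal{L})=h^0(Y_{\rm reg},\mathcal{L})$ by normality, this gives (\ref{eqpourpullbaclsec}) with $h^0(X,2L+M)=10$.

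Finally the degree. As $E$ is contracted onto $Y_{\rm sing}$, which has codimension $\geq 2$, the general plane section $C$ misses $\phi_{L+M}(E)$, so $C_X\cap E=\emptyset$ and $\deg\,(2L+M)_{\mid C_X}=\deg\,\phi_{L+M}^*\mathcal{L}_{\mid C_X}=3\deg\,\mathcal{L}_{\mid C}$. From $\deg\,\mathcal{O}_{\mathbb{P}^5}(1)_{\mid C}=3$ and $\deg\,(C_X/C)=3$ one gets $\deg\,(L+M)_{\mid C_X}=9$, hence $\deg\,(2L+M)_{\mid C_X}=9+\deg\,L_{\mid C_X}$, and a direct intersection computation using (\ref{eqpourinfibdr}), (\ref{eqpourinfibdrsupinfo}) together with the determination of the base locus of $|L+M|$ gives $\deg\,L_{\mid C_X}=6$, whence $\deg\,(2L+M)_{\mid C_X}=15$ and $\deg\,\mathcal{L}_{\mid C}=5$. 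The main obstacle is twofold: qualitatively, the passage from the fibrewise factorization of \cite{DHMV} to a genuine factorization through the possibly singular $Y$, with the correct bookkeeping of the contracted divisor $E$ in (\ref{eqpourpullbacl}), for which the normality of $Y$ (Lemma \ref{le1pourcubicdim4}) is essential; and quantitatively, pinning down $\deg\,L_{\mid C_X}=6$, i.e. that the base-locus $1$-cycle of $|L+M|$ is a multiple of $L^2M$ (so that $L\cdot C_X=6$ by (\ref{eqpourinfibdr}), (\ref{eqpourinfibdrsupinfo})). It is exactly the resulting value $\deg\,\mathcal{L}_{\mid C}=5$, which is not a multiple of $\deg\,\mathcal{O}_Y(1)_{\mid C}=3$, that forces $\mathcal{L}$ to be independent of $\mathcal{O}_Y(1)$ and will drive the contradiction in Proposition \ref{procubquatre}.
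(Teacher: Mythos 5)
Your first two paragraphs (the fibrewise factorization via the results of \cite{DHMV}, the extraction of $\mathcal{L}=\psi^*\mathcal{O}_{\mathbb{P}^9}(1)$ on $Y_{\rm reg}$, the appearance of the contracted divisor $E$, and the count $h^0(Y,\mathcal{L})\geq 10$) follow essentially the same route as the paper, which also invokes \cite[Proof of Proposition 6.4]{DHMV} and \cite[Lemma 6.8]{DHMV} to descend the linear systems $H^0(X,L+M)_{\mid D}$, $H^0(X,2L+M)_{\mid D}$, $H^0(X,3L+2M)_{\mid D}$ to systems $W'_3,W'_5,W'_8$ on $C$ and then concludes exactly as you do. That part is fine.

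The gap is in the degree computation, and it is the one point you flag as ``the main obstacle'' without resolving it. Your intersection-theoretic route needs the class $e$ of the base-locus $1$-cycle of $|L+M|$. What you can actually extract from your own argument is only this: writing $e=\frac{a}{2}L^2M+\frac{b}{2}LM^2$ with $a,b\geq 0$ integers (the pseudoeffective cone of $1$-cycles being generated by $\frac{1}{2}L^2M$ and $\frac{1}{2}LM^2$), the identity $\deg(L+M)_{\mid C_X}=3\cdot 3=9$ forces $a+b=3$, and then $\deg(2L+M)_{\mid C_X}=15-b$ with $b\in\{0,3\}$ imposed by divisibility by $3$. So your method leaves open $\deg\mathcal{L}_{\mid C}\in\{4,5\}$, and selecting $b=0$ (i.e.\ $e=\frac{3}{2}L^2M$) is precisely the unproved step; nothing in (\ref{eqpourinfibdr}), (\ref{eqpourinfibdrsupinfo}) distinguishes the two cases, and the asymmetry between $L$ and $M$ coming from $h^0(X,L)=0$ would have to be exploited in some nontrivial way. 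The paper avoids determining the base locus altogether: it pins down the degree by a Castelnuovo-type argument on the elliptic curve $C$. From below, $W'_5$ is base-point free of dimension $\geq 5$, so $\deg\mathcal{L}_C=h^0(C,\mathcal{L}_C)\geq 5$. From above, the bound $\dim H^0(X,3L+2M)_{\mid D}\leq 8$ of \cite{DHMV} shows that for three general points $x,y,z\in C$ the multiplication map $W'_3\otimes W'_{2,x,y,z}\rightarrow H^0(C,\mathcal{L}_C(1)(-x-y-z))$ has source of dimension $6$ and rank $\leq 5$, hence a nontrivial kernel; the base-point-free pencil trick then gives $H^0(C,\mathcal{L}_C^{-1}(x+y+z)(1))\neq 0$, whence $\deg\mathcal{L}_C<6$. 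This two-sided squeeze is the missing idea: the upper bound on $\dim W_8$, not an intersection number, is what rules out $\deg\mathcal{L}_{\mid C}=4$ (equivalently, what your computation would call $b=3$). Without it, or without an independent determination of the base-locus class, your proof does not close.
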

\begin{proof} Denote  $D\subset X$  the curve $\phi_{L+M}^{-1}(C)$ (that is,  the mobile part of the closed algebraic subset defined by the three equations $\alpha,\beta,\gamma$ of $L+M$ on $X$ corresponding to the three sections of $\mathcal{O}_Y(1) $ defining $C$). We recall from \cite[Proof of proposition 6.4]{DHMV} that, under our assumptions,  the
linear systems
$$W_3:=H^0(X,L+M)_{\mid D},\,W_5:=H^0(X,2L+M)_{\mid D},\,W_8:=H^0(X,3L+2M)_{\mid D}$$
are of respective dimension $3,\,\geq 5,\,\leq 8$.  Then \cite[Lemma 6.8]{DHMV} proves, using the multiplication map
$$\mu: W_3\otimes W_5\rightarrow W_8$$
 that
these three linear systems are pulled back from linear systems $W'_3,\,W'_5,\,W'_8$ on the curve $C$.
By removing the base-points, we may assume  that the linear systems $W'_3$, $W'_5$ and $W'_8$ have no base-points on $C$. This defines a line bundle $\mathcal{L}_C$ on $C$ such that $W'_5\subset H^0(C,\mathcal{L}_C)$ and has no base-points.  Note that $W'_3$ gives the embedding of $C$ as a plane curve.
\begin{claim}\label{claimpourCetdeg8}  One has $ W'_5=H^0(C,\mathcal{L}_C)$; equivalently,  the line bundle $\mathcal{L}_C$ on $C$ has degree $5$.
\end{claim}
\begin{proof}  We have a base point free not necessarily complete  linear system $W'_5\subset H^0(C,\mathcal{L}_C)$ of dimension $\geq 5$ on $C$ such that
the image of the multiplication map
$$W'_3\otimes W'_5\rightarrow H^0(C,\mathcal{L}_C(1))$$
has rank $\leq 8$. Up to taking a general vector subspace, we can assume ${\rm dim}\,W'_5=5$.
Let $x,\,y,\,z$ be three general points of $C$. Then the linear system $W'_{2,x,y,z}$ of elements of $W'_5$ vanishing on $x$, $y$ and $z$ has dimension $2$ and the rank of the multiplication map
$$W'_3\otimes W'_{2,x,y,z}\rightarrow H^0(C,\mathcal{L}_C(1)(-x-y-z))$$
is at most $5$, hence has a nontrivial kernel. By the base-point-free pencil trick, one has
$H^0(C,\mathcal{L}_C^{-1}(x+y+z)(1))\not=0$, hence ${\rm deg}\,\mathcal{L}_C^{-1}(x+y+z)(1)>0$ since $x,\,y,\,z$ are arbitrary.
It follows that  ${\rm deg}\,\mathcal{L}_C<6$, hence ${\rm deg}\,\mathcal{L}_C=5$ and the linear system $W'_5$ is complete.
\end{proof}
We now conclude the proof of Lemma \ref{le2pourcubicdim4}.  As the rational map $\phi_{2L+M}$ on each curve $D\subset X$ as above factors through the corresponding curve $C\subset Y$, there exists a line bundle  $\mathcal{L}$  on the chosen  desingularization $\widetilde{Y}$ of $Y$, such that
$|\mathcal{L}|$ has no fixed components and
$\phi_{\mathcal{L}}\circ \phi_{L+M}=\phi_{2L+M}$.  As we already explained in the proof of Lemma \ref{le3sur3dim4}, the $10$-dimensional linear system $H^0(X,2L+M)$   has no fixed component. This implies
  formula
 (\ref{eqpourpullbacl}), where the divisor $E$ appears because a divisor contracted by $\phi_{L+M}$ can appear in the fixed part of the
 linear system $\phi_{L+M}^*|\mathcal{L}|$. Equality (\ref{eqpourpullbaclsec})  follows. Finally, as $H^0(\widetilde{Y},\mathcal{L})$ has no fixed component and $C\subset \widetilde{Y}$ is in general position, $H^0(\widetilde{Y},\mathcal{L})_{\mid C}$ has no base-point, hence, $\mathcal{L}_{\mid C}=\mathcal{L}_C$, where $\mathcal{L}_C$ appears in Claim \ref{claimpourCetdeg8}. It thus follows from Claim  \ref{claimpourCetdeg8} that ${\rm deg}\,\mathcal{L}_{\mid C}=5$.
\end{proof}
Lemma \ref{le2pourcubicdim4} indicates that if $Y$ is a cubic hypersurface, it has a singular locus which is of dimension at least $1$. Indeed, if ${\rm Sing}\,Y$ is isolated, the general hyperplane section $Y'$  of $Y$ is smooth, hence has Picard number $1$ and thus any line bundle on $Y_{\rm reg}$ has degree divisible by $3$ on the plane sections of $Y'$. Going farther, we  now prove
\begin{lemm}  \label{le3pourcubicdim4} If $Y$ is a cubic hypersurface, the singular locus of $Y$ has dimension at least $2$.
\end{lemm}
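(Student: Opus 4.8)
The plan is to continue the hyperplane–section argument of the remark preceding Lemma \ref{le2pourcubicdim4}, now pushing it one step further. We already know ${\rm dim}\,{\rm Sing}\,Y\leq 2$ by Lemma \ref{le1pourcubicdim4}, and the isolated–singularity argument gives ${\rm dim}\,{\rm Sing}\,Y\geq 1$, so I would argue by contradiction, assuming ${\rm dim}\,{\rm Sing}\,Y=1$ and writing $\Gamma:={\rm Sing}\,Y$ for the singular curve. For a general hyperplane $H\subset\mathbb{P}^5$ the section $Y':=Y\cap H$ is then a normal cubic threefold whose singularities are the finitely many points of $\Gamma\cap H$, hence isolated; in particular $Y'$ contains no $2$-plane, since a plane would force $Y'$ to be singular along a curve. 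The point of passing to the threefold is to keep the Picard group small: as $Y'$ is a hypersurface threefold with only isolated singularities, the Lefschetz hyperplane theorem gives $H^2(Y',\mathbb{Z})=\mathbb{Z}$, and the vanishing $h^{0,1}(Y')=h^{0,2}(Y')=0$ yields ${\rm Pic}(Y')=\mathbb{Z}\cdot\mathcal{O}_{Y'}(1)$. Thus every Cartier divisor on $Y'$ has degree divisible by $3$ on a general plane section $C$ (a smooth plane cubic, by Lemma \ref{le1pourcubicdim4}), whereas the line bundle $\mathcal{L}$ of Lemma \ref{le2pourcubicdim4} restricts to a class $\mathcal{L}_{\mid Y'}\in{\rm Cl}(Y')$ with ${\rm deg}\,\mathcal{L}_{\mid C}=5$. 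Since $5$ is not divisible by $3$, the class $\mathcal{L}_{\mid Y'}$ is not Cartier, so $Y'$ is non-factorial; equivalently, $Y$ itself is non-factorial along $\Gamma$.

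It remains to turn non-factoriality into a contradiction, and this is where the real work lies. I would work on $Y$ itself, using the degree homomorphism ${\rm deg}\colon{\rm Cl}(Y)\to\mathbb{Z}$, $D\mapsto D\cdot C$, which sends a threefold Weil divisor to its degree in $\mathbb{P}^5$. Its image is a subgroup of $\mathbb{Z}$ containing $3$ (the hyperplane class) and $5$ (the class $\mathcal{L}$), hence all of $\mathbb{Z}$, so $Y$ carries Weil divisor classes of every degree. The aim is to produce an \emph{effective} non-Cartier divisor of degree $2$, that is, a quadric threefold $\mathcal{Q}\subset Y$ spanning a hyperplane $H_0:=\langle\mathcal{Q}\rangle\cong\mathbb{P}^4$: then $H_0\cap Y$ is a cubic threefold containing $\mathcal{Q}$, so $H_0\cap Y=\mathcal{Q}\cup\mathbb{P}^3$ is a reducible hyperplane section of $Y$. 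Pulling this back under the generically finite map $\phi_{L+M}$ produces a reducible member of $|L+M|$ on $X$, contradicting the hypothesis $h^0(X,L)=0$, exactly in the spirit of Lemmas \ref{leeasyB} and \ref{le1pourcubicdim4}. To manufacture such a $\mathcal{Q}$ I would residuate the effective degree-$5$ divisor $\mathcal{L}$ — whose linear system is large, $h^0(X,2L+M)=10$ — inside quadric and cubic hypersurface sections, lowering the degree until an effective defect surface of degree $1$ or $2$ is reached. The alternative configurations, in which the defect is carried by a surface whose pullback sweeps out a $\Sigma\subset X$ with $(l+m)^2-3[\Sigma]$ pseudoeffective, are excluded directly by Lemma \ref{nolmsur2crucial}.

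The step I expect to be the genuine obstacle is precisely this residuation: upgrading the purely numerical statement ``there exists a Weil divisor class of degree coprime to $3$'' into an \emph{actual} quadric (or planar) threefold contained in $Y$, since a priori the defect of a cubic fourfold singular along a curve could be carried only by divisors of higher degree. Controlling this should rely on the genericity of $H$, so that the nodes $\Gamma\cap H$ lie in general position on $\Gamma$ and the defect of $Y'$ is as small as possible, together with the abundance of sections of $\mathcal{L}$, which ought to force the minimal effective defect divisor to have the small degree needed to split a hyperplane section. Once that is in place, the reducibility contradicts $h^0(X,L)=0$ and excludes ${\rm dim}\,{\rm Sing}\,Y=1$, leaving only ${\rm dim}\,{\rm Sing}\,Y=2$.
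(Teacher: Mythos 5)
Your reduction is sound as far as it goes: given ${\rm dim}\,{\rm Sing}\,Y\leq 1$, the degree-$5$ line bundle $\mathcal{L}$ on $Y_{\rm reg}$ from Lemma \ref{le2pourcubicdim4} together with the Grothendieck--Lefschetz computation of the Picard group of a hypersurface threefold does show that $Y$ carries a non-Cartier Weil divisor class of degree prime to $3$, and your proposed endgame (a reducible hyperplane section $\mathcal{Q}\cup\mathbb{P}^3$ pulling back to a reducible member of $|L+M|$, forcing an effective divisor of class $l$ or $m$) would close the argument. But the step you yourself flag as ``the genuine obstacle'' --- converting numerical non-factoriality into an \emph{effective} divisor of degree $1$ or $2$ inside $Y$ --- is precisely the content of the lemma, and your proposal contains no argument for it. ``Residuating $\mathcal{L}$ inside quadric and cubic sections until an effective defect of degree $1$ or $2$ is reached'' is a plan, not a proof: a priori the Weil class $\mathcal{O}_Y(2)-\mathcal{L}$, or any class of degree prime to $3$, could fail to be effective, and neither the genericity of $H$ nor the size of $h^0(X,2L+M)$ by itself produces an effective representative of small degree.

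The paper fills exactly this gap (Claim \ref{claimpourP3}) by a concrete Riemann--Roch computation on the \emph{smooth cubic surface} sections $S=Y\cap\mathbb{P}^3$, which are smooth precisely because ${\rm dim}\,{\rm Sing}\,Y\leq1$, so that the intersection theory is fully under control. Setting $\mathcal{L}'=\mathcal{O}(2)\otimes\mathcal{L}^{-1}$, one has $c_1(\mathcal{L}'_S)\cdot K_S=-1$; in the case $c_1(\mathcal{L}_S)^2\geq 7$, Riemann--Roch gives $\chi(S,\mathcal{L}'_S)\geq1$, hence a line $\Delta_S\subset S$ with $\mathcal{L}'_S=\mathcal{O}_S(\Delta_S)$, and a sweeping argument globalizes these lines into a linear $\mathbb{P}^3\subset Y$ with $\mathcal{L}=\mathcal{I}_{\mathbb{P}^3}(2)$; in the case $c_1(\mathcal{L}_S)^2\leq 5$, the bound $h^0(S,\mathcal{L}_S)\leq6$ against $h^0\geq10$ forces $h^0(Y_{\rm reg},\mathcal{L}(-1))\neq0$, again producing a residual $\mathbb{P}^3$. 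The paper's final contradiction is also slightly different from yours and more robust: from $h^0(Y_{\rm reg},\mathcal{I}_{\mathbb{P}^3}(2))=11>10=h^0(X,2L+M)$ and the $2$-dimensional subspace $H_Y\cdot H^0(\mathcal{I}_{\mathbb{P}^3}(1))$ one extracts directly a nonzero section of $L$, avoiding any discussion of whether the pullback of the fixed component $\mathbb{P}^3$ is a genuine divisor. You need to supply the surface-section effectivity argument (or an equivalent one); without it the proof is incomplete at its central point.
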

\begin{proof}  Assume by contradiction that the singular locus of the cubic hypersurface $Y$ has  dimension $\leq1$. The notation $\mathcal{L}_{\rm reg}\in {\rm Pic}\,Y_{\rm reg}$ being as in Lemma \ref{le2pourcubicdim4}, we  prove
\begin{claim}\label{claimpourP3} There exists a divisor $D \subset Y$ which is a linear $\mathbb{P}^3\subset Y\subset \mathbb{P}^5$ such that
\begin{eqnarray}\label{eqLP3} \mathcal{L}_{\rm reg}=\mathcal{O}_{Y_{reg}}(2)(-D).
\end{eqnarray}
\end{claim}
\begin{proof} Let $S\subset H\subset Y$ be  a general $2$-dimensional (respectively 3-dimensional) linear section of $Y$. The surface $S$ is thus smooth by our assumption and contained in $Y_{\rm reg}$. Assume $c_1(\mathcal{L}_{\mid S})^2\geq 7$. Then, denoting $\mathcal{L}':=\mathcal{O}_{Y_{\rm reg}}(2)\otimes  \mathcal{L}_{\rm reg}^{-1}$,  the line bundle
$\mathcal{L}'_S:=\mathcal{L}'_{\mid S}$ satisfies
\begin{eqnarray}\label{eqnombresdint}
c_1(\mathcal{L}'_S)\cdot K_S=-1,\,c_1(\mathcal{L}'_S)^2\geq 7+12-20=-1,
\end{eqnarray}
hence we have
$\chi(S,\mathcal{L}'_S)\geq 1$ and it  follows from the first equality in (\ref{eqnombresdint})  that $h^2(S,\mathcal{L}'_S)=0$, hence $h^0(S,\mathcal{L}'_S)\not=0$. Thus
$\mathcal{L}'_S=\mathcal{O}_S(\Delta_S)$ for some line $\Delta_S\subset S$.
We now show that the   lines $\Delta_S$  for  all $S\subset Y_{\rm reg}$ fill-in only a  divisor $D$  in $Y_{\rm reg}$. To see this, we observe that
if $C\subset S$ is a smooth plane section, the intersection
$\Delta_S\cap C$ is a point $x\in C$ that satisfies
$$\mathcal{O}_C(x)=\mathcal{L}'_{\mid C}.$$
It follows that $x$ does not depend on $S$ containing $C$.  Fixing $x$ and moving $C$ containing $x$, we finally conclude that, if a point $x\in \Delta_S$ for some $S$, then $x\in \Delta_S$ for any $S$ containing $x$. This proves the existence of the divisor $D$.
This divisor is then  a $\mathbb{P}^3$ since it contains at least a $4$-dimensional family of lines (indeed, a given line is contained in  a $4$-dimensional family of surfaces $S$ and there is a $8$-dimensional family of surfaces $S$).
Finally, we found that  the divisor $D\cong \mathbb{P}^3\subset Y$ satisfies
$$\mathcal{L}_{\mid S}=\mathcal{I}_{D}(2)_{\mid S}$$
for any smooth surface $S\subset Y_{\rm reg}$.  It easily  follows that
$\mathcal{L}_{\rm reg}=\mathcal{I}_{D}(2)$.

We next assume that  $c_1(\mathcal{L}_{\mid S})^2\leq 5$. Then, as $|\mathcal{L}_{\mid S}|$ has no fixed part, it is nef and we have
$h^1(S,\mathcal{L}_{\mid S})=0,\,h^2(S,\mathcal{L}_{\mid S})=0$, since $-K_S$ is ample. Hence we have in this case
\begin{eqnarray}\label{eqhzeroLS} h^0(S,\mathcal{L}_{\mid S})=1+\frac{c_1(\mathcal{L}_{\mid S})^2-c_1(\mathcal{L}_{\mid S})\cdot K_S }{2}\leq 6.
\end{eqnarray}
Comparing with (\ref{eqpourpullbaclsec}), we get $h^0(Y_{\rm reg},\mathcal{L}_{\rm reg}\otimes \mathcal{I}_S)\geq 4$, and considering as above a general pair
$S\subset H\subset Y$, we conclude that either

(1)
$h^0(Y_{\rm reg},\mathcal{L}_{\rm reg}(-1))\geq2$, or

(2)
$h^0(H_{\rm reg},\mathcal{L}_{\rm reg}(-1)_{\mid H_{\rm reg}})\geq3$.

In  case (1), the divisor of a general  section of $\mathcal{L}_{\rm reg}(-1)$ has degree $2$, hence it is reduced by Bertini and  there are two possibilities:

(a)  The divisor of this section is the intersection $Q_3\cap Y_{\rm reg}$ for some  $3$-dimensional quadric $Q_3\subset Y$ such that
$Q_3\cap Y_{\rm reg} \in |\mathcal{L}_{\rm reg}(-1)|$. There is then a residual $\mathbb{P}^3\subset Y$ such that
$\mathbb{P}^3+Q_3$ is a hyperplane section of $Y$ and the lemma is  also proved in this case.

(b)  The divisor of this section is  the intersection with $Y_{\rm reg}$ of the  union $D_1\cup D_2$ of two $\mathbb{P}^3$'s contained in $ Y$ such that
$(D_1\cup D_2)\cap Y_{\rm reg} \in |\mathcal{L}(-1)|$ and ${\rm dim}\,D_1\cap D_2=1$. In fact, this case is impossible because the intersection of $D_1\cup D_2$ with a general  cubic surface $S\subset Y$ as above is the disjoint union of two lines, hence is a rigid divisor. It follows immediately that $h^0(Y_{\rm reg},\mathcal{L}_{\rm reg}(-1))\leq1$, contradicting the inequality   (1).

The  case (2) is excluded as follows. We get that the divisor of a general  section of $\mathcal{L}_{\rm reg}(-1)_{\mid H_{\rm reg}}$ has degree $2$, hence it is reduced by Bertini and  there are two possibilities:

(a)
There is a $2$-dimensional quadric $Q_2\subset H$ such that
$Q_2\cap H_{\rm reg}\in |\mathcal{L}_{\mid H_{\rm reg}}(-1)|$. But then $h^0(H_{\rm reg},\mathcal{L}_{\rm reg}(-1)_{\mid H_{\rm reg}})\leq2$, contradicting the inequality (2).

(b)  There are two planes  $P,\,P'\subset H$ such that
$(P+P')\cap H_{\rm reg}\in |\mathcal{L}_{\mid H_{\rm reg}}(-1)|$. But then, we find as above $h^0(H_{\rm reg},\mathcal{L}(-1)_{\mid H_{\rm reg}})\leq1$, contradicting the inequality  (2).

The claim is thus proved.
\end{proof}
We now conclude the proof of Lemma \ref{le3pourcubicdim4}.  Let $D\cong \mathbb{P}^3\subset Y$ be as in Claim \ref{claimpourP3}. We have
$h^0(Y_{\rm reg}, \mathcal{I}_D(2))=11$, while $h^0(X,2L+M)=10$.
The inclusion $H^0(X,2L+M)\subset H^0(Y_{\rm reg}, \mathcal{I}_D(2))$ given by Lemmas \ref{le2pourcubicdim4} and  Claim \ref{claimpourP3} is thus the inclusion of a hyperplane.  Let $H_X\subset X$ be a general member of $|L+M|$, that is, the inverse image
$\phi_{L+M}^{-1}(H_Y)$ where $H_Y\subset Y$ is a general hyperplane section. Then
$H^0(Y_{\rm reg}, \mathcal{I}_{H_Y}\otimes \mathcal{I}_D(2))=H^0(Y_{\rm reg},  \mathcal{I}_D(1))$ has dimension $2$, hence it intersects nontrivially the hyperplane $H^0(X,2L+M)\subset H^0(Y_{\rm reg}, \mathcal{I}_D(2))$, providing a nonzero section of the line bundle $$(2L+M)\otimes \mathcal{I}_{H_X}=L$$
on $X$, which  is excluded by the  hypotheses of  Theorem  \ref{propLMnef3}. The lemma is thus proved.
\end{proof}

\begin{proof}[Proof of Proposition \ref{procubquatre}]  Using Lemmas \ref{le3pourcubicdim4} and \ref{le1pourcubicdim4}, the singular locus of a cubic hypersurface $Y={\rm Im}\,\phi$ has dimension $2$. We observe now that the arguments  in  Lemma \ref{le3pourcubicdim4} involving smooth cubic surfaces appearing as general linear sections of $Y$ when ${\rm dim}\,({\rm Sing}\,Y)\leq 1$ extend in a straightforward way  if the general cubic surface section has Duval singularities, which happens if  the order of vanishing of  the defining equation $f_Y$ of $Y$  along any $2$-dimensional component of its singular locus is not $3$ (see \cite{cheltsov}). Indeed, we can    work in that case  with a crepant resolution of singularities of these surfaces.  However, if ${\rm dim}\,({\rm Sing}\,Y)=2$ and $f_Y$ vanishes to order $3$ along a component of ${\rm Sing}\,Y$,   $Y$ is a cone over an elliptic curve in $\mathbb{P}^2$. This case is excluded since $Y$ would then have many reducible hyperplane sections. This concludes the proof of Proposition \ref{procubquatre}.
\end{proof}
\begin{prop}\label{proquatrecinqtrois} Let $X,\, L,\, M$ be as above. Then the image $Y=\phi_{L+M}(X)$ cannot be  a linearly nondegenerate threefold of degree $4$ or $5$ in $ \mathbb{P}^5$.
\end{prop}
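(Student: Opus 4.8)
The plan is to mirror the treatment of the cubic hypersurface in Proposition~\ref{procubquatre}, adapting it to the fibration structure that a threefold image imposes. Since $\dim Y=3$, the map $\phi_{L+M}$ has one-dimensional fibres, so by Claim~\ref{claimpourle63} the general fibre $F$ is a non-hyperelliptic genus~$3$ curve of class $f=\tfrac12(L^2M+LM^2)$. Using (\ref{eqpourinfibdr}) and (\ref{eqpourinfibdrsupinfo}) one computes
\[
(L+M)\cdot f=2,\qquad (2L+M)\cdot f=3,\qquad (3L+2M)\cdot f=5,
\]
and, since $(l+m)^3=6f$, the residual base $1$-cycle of $|L+M|$ equals $(6-d)f$, which is nonzero precisely because $d=4$ or $5$. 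As in the earlier lemmas I would first exclude that $Y$ is singular in codimension~$1$: a threefold with divisorial singular locus, or more generally one swept out by reducible hyperplane sections, pulls back under $\phi_{L+M}$ to reducible members of $|L+M|$, which is impossible since $h^0(X,L)=0$ and the pseudoeffective cone of $X$ is generated by $L$ and $M$ (this is the mechanism of Lemmas~\ref{leeasyB} and \ref{le1pourcubicdim4}). In particular a general curve section $C=Y\cap\Lambda$ with $\Lambda\cong\mathbb{P}^3$ is a smooth, linearly normal, non-degenerate space curve of degree $d$.

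Next I would import the multiplication-map machinery of \cite{DHMV} (Proposition~6.4 and Lemma~6.8), exactly as in Lemmas~\ref{le3sur3dim4} and \ref{le2pourcubicdim4}. Restricting $|L+M|$, $|2L+M|$ and $|3L+2M|$ to the preimage of a general curve section and feeding the dimension data $\dim W_3=3$, $\dim W_5\geq5$, $\dim W_8\leq8$ into the Hopf lemma and the base-point-free pencil trick, the aim is to show these three linear systems are pulled back from $C$, hence globally that $\phi_{2L+M}$ factors rationally through $\phi_{L+M}$. This should produce a line bundle $\mathcal{L}$ on $Y_{\mathrm{reg}}$, with $|\mathcal{L}|$ free of fixed components, such that
\[
2L+M=\phi_{L+M}^*\mathcal{L}\,(-E)
\]
for an effective divisor $E$ contracted by $\phi_{L+M}$, together with $H^0(X,2L+M)=H^0(Y_{\mathrm{reg}},\mathcal{L})$; a Hopf-lemma computation exactly as in Claim~\ref{claimpourCetdeg8} then pins down $\deg\mathcal{L}|_C$.

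With $\mathcal{L}$ in hand I would run the endgame of Lemmas~\ref{le2pourcubicdim4}--\ref{le3pourcubicdim4} and Proposition~\ref{procubquatre}. Comparing $h^0(Y_{\mathrm{reg}},\mathcal{L})\geq h^0(X,2L+M)=10$ with the dimension of the space of quadrics (or of the relevant adjoint system) on $Y$ should force $\mathcal{L}$ to have the shape $\mathcal{O}_Y(2)(-D')$ for a linear subspace or a low-degree divisor $D'\subset Y$; intersecting with a general hyperplane section $H_Y$ and writing $H_X=\phi_{L+M}^{-1}(H_Y)\in|L+M|$ then yields a nonzero section of $(2L+M)\otimes\mathcal{O}_X(-H_X)=L$, contradicting $h^0(X,L)=0$. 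The degenerate configurations — $Y$ a cone or a threefold swept out by linear spaces — are ruled out separately: a fibration of $Y$ by planes or by linear sections produces three homologous surfaces $F_{x_1},F_{x_2},F_{x_3}$ in $X$ over three collinear points, with $[F_{x_1}]+[F_{x_2}]+[F_{x_3}]+(\text{effective})=(l+m)^2$, so that $(l+m)^2-3[\Sigma]$ is pseudoeffective, contradicting Lemma~\ref{nolmsur2crucial}, exactly as in Lemma~\ref{le1pourcubicdim4}.

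The main obstacle is that, unlike the cubic hypersurface, non-degenerate threefolds of degree $4$ and $5$ in $\mathbb{P}^5$ form several families — complete intersections of two quadrics and other del Pezzo threefolds, rational scrolls, isomorphic projections of varieties of minimal degree, and cones — each with its own possible singularities. The real work is a case-by-case verification that the factorization argument applies and that the section-count (or divisibility of $\deg\mathcal{L}|_C$) contradiction is reached in every family; in particular, controlling the singular locus so that $\mathcal{L}$ is a genuine line bundle on $Y_{\mathrm{reg}}$ and so that the general curve and surface sections are smooth, and confirming the bounds $\dim W_5\geq5$ and $\dim W_8\leq8$ in each case, is where the delicate bookkeeping lies. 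I expect the scroll case, where the natural linear-space fibration must be converted into a pseudoeffective class of the form $(l+m)^2-3[\Sigma]$, to be the most error-prone step.
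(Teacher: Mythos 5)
Your proposal diverges from the paper's argument, and the divergence hides a genuine gap at its core step. The multiplication-map machinery you want to import from \cite{DHMV} (and from Lemmas \ref{le3sur3dim4}, \ref{leeasyD} and \ref{le2pourcubicdim4}) is set up for the case where $Y$ is a \emph{fourfold}: there the curve $X_C$ cut out by a general $W_3\subset H^0(X,L+M)$ is irreducible and maps \emph{finitely} onto the plane section $C=Y\cap\mathbb{P}^2$, and the whole point of the Hopf-lemma/base-point-free-pencil-trick computation is to show that linear systems on $X_C$ descend through that finite map. When $Y$ is a threefold, $\phi_{L+M}$ has one-dimensional fibres: the preimage of a curve section $Y\cap\mathbb{P}^3$ is a \emph{surface}, and the curve cut out by three general members of $|L+M|$ maps to the finite set $Y\cap\mathbb{P}^2$, i.e.\ it is a union of $d$ fibres rather than an irreducible curve finite over a curve in $Y$. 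So the dimension data $\dim W_3=3$, $\dim W_5\geq 5$, $\dim W_8\leq 8$ and the factorization statement you want simply are not available in this setting, and the endgame count ($h^0(Y_{\rm reg},\mathcal{I}_D(2))=11$ versus $h^0(X,2L+M)=10$, producing a section of $L$) is specific to the cubic fourfold and has no analogue for threefolds of degree $4$ or $5$. The pieces of your sketch that do work are the peripheral ones: the fibre class and genus from Claim \ref{claimpourle63}, the exclusion of cones via Lemma \ref{nolmsur2crucial}, and the reducible-hyperplane-section mechanism.

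What the paper actually does is classification-theoretic. For degree $4$, after excluding cones (Lemma \ref{leserie2A}), Swinnerton-Dyer's classification forces $Y$ to be a complete intersection of two quadrics; the contradiction then comes from configurations of \emph{lines} on $Y$ — four lines through a general point spanning a $\mathbb{P}^3$ in the smooth case, or a connected chain of three mobile lines, or a rational fibration by lines — each of which either violates Lemma \ref{nolmsur2crucial} via an identity $kf+e=(l+m)^2$ with $k\geq 3$, or produces reducible members of $|L+M|$ (with Lemma \ref{lepetitqudfib} handling the fibration-by-lines case). For degree $5$, Ionescu's classification shows a smooth $Y$ would be a quadric bundle over $\mathbb{P}^1$, hence have reducible hyperplane sections; in the singular case one projects from a singular point to a quadric or cubic in $\mathbb{P}^4$ and, using the sectional genus, again lands on a quadric-bundle structure or on the already-excluded codimension-one-singular cubic. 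If you want to complete your write-up you would need to replace the descent-of-$\mathcal{L}$ step by an argument of this kind; as written, the central mechanism does not transfer from the hypersurface case.
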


Assuming by contradiction that $Y$ is a threefold of degree $4$ or $5$, we first prove the following    lemmas.
\begin{lemm} \label{leserie2A} The threefold  $Y$ cannot be a cone $\pi:Y\dashrightarrow S$ over a surface $S$ in $\mathbb{P}^4$.
\end{lemm}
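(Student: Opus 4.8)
The plan is to reduce to a violation of Lemma~\ref{nolmsur2crucial}, exactly along the lines of the treatment of the cone over a cubic surface in the proof of Lemma~\ref{le1pourcubicdim4}. Suppose $Y$ is a cone with vertex a point $v$ over a surface $S\subset\mathbb{P}^4$; the vertex is forced to be a point because $\dim Y=\dim S+1$, and $S$ spans $\mathbb{P}^4$ since $Y$ is linearly nondegenerate in $\mathbb{P}^5$. Let $\pi:Y\dashrightarrow S$ be the linear projection from $v$ and set $\psi:=\pi\circ\phi_{L+M}:X\dashrightarrow S$. As $\dim X=4$ and $\dim S=2$, the general fibre of $\psi$ is a surface $\Sigma\subset X$: over a general $x\in S$ the fibre $\Sigma_x:=\psi^{-1}(x)$ equals $\phi_{L+M}^{-1}$ of the ruling line $\langle v,x\rangle\subset Y$, and all such $\Sigma_x$ are algebraically equivalent, hence homologous.

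The key step is to express $(l+m)^2$ in terms of the fibre class $[\Sigma]$. Let $d:=\deg Y\in\{4,5\}$, which equals $\deg S$ because $Y$ is a cone over $S$ with a point vertex. Choose two general hyperplanes $H_1',H_2'\subset\mathbb{P}^4$, so that $S\cap H_1'\cap H_2'=\{x_1,\dots,x_d\}$ consists of $d$ general points of $S$. Each $H_i'$ pulls back under $\pi$ to a hyperplane $H_i\subset\mathbb{P}^5$ through $v$, and $Y\cap H_1\cap H_2$ is the union $\bigcup_{i=1}^d\langle v,x_i\rangle$ of the corresponding $d$ ruling lines. Pulling back by $\phi_{L+M}$, the members $D_1,D_2\in|L+M|$ defined by $H_1,H_2$ meet in a $2$-cycle of class $(l+m)^2$; away from the base locus of $|L+M|$ this cycle is $\phi_{L+M}^{-1}(Y\cap H_1\cap H_2)=\bigcup_{i=1}^d\Sigma_{x_i}$, while the remaining components are supported on the base locus and contribute an effective class $e$. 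Hence in $H^4(X,\mathbb{Z})$
\[ (l+m)^2=\sum_{i=1}^d[\Sigma_{x_i}]+e=d\,[\Sigma]+e. \]

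Since $d\geq 4$ and both $[\Sigma]$ and $e$ are effective, hence pseudoeffective, the class $(l+m)^2-3[\Sigma]=(d-3)[\Sigma]+e$ is pseudoeffective, contradicting Lemma~\ref{nolmsur2crucial}; this will prove the lemma. The main point requiring care is the cycle-theoretic bookkeeping in the displayed identity: one must check that the mobile part of $D_1\cap D_2$ is exactly the union of the $d$ homologous fibre surfaces $\Sigma_{x_i}$, so that the coefficient of $[\Sigma]$ is genuinely $d$, and that every component absorbed into $e$, arising from the base locus of $|L+M|$, is effective and therefore pseudoeffective. Once this is in place the numerical contradiction is immediate, the only input being $\deg Y=d\geq 4>3$.
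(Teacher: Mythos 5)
Your proposal is correct and follows essentially the same route as the paper: the paper's proof consists of one sentence reducing to the argument of Lemma~\ref{leeasyA}, namely that the general fibre $F$ of $\pi\circ\phi_{L+M}:X\dashrightarrow S$ satisfies $(l+m)^2=d[F]+e$ with $d\geq 4$ and $e$ pseudoeffective, contradicting Lemma~\ref{nolmsur2crucial}. Your write-up simply makes explicit the bookkeeping (point vertex, $\deg S=\deg Y\geq 4$, the two hyperplanes through the vertex) that the paper leaves implicit.
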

\begin{proof} As in the proof of Lemma \ref{leeasyA}, this would indeed contradict Lemma \ref{nolmsur2crucial} by considering the composite map $\pi\circ \phi_{L+M}:X\dashrightarrow S$.
\end{proof}
\begin{lemm} \label{leserie2AA} The threefold $Y\subset \mathbb{P}^5$ is not contained in a quadric of rank $\leq 4$.
\end{lemm}
\begin{proof} Indeed,  $Y$ would have otherwise many reducible hyperplane sections, contradicting the fact that all members of $|L+M|$ are irreducible.
\end{proof}
We now exclude the case of degree $4$.
\begin{lemm} \label{leserie2Bbis} The threefold $Y$ cannot be of degree $4$.
\end{lemm}
\begin{proof} As $Y$ is not a  cone and  is linearly nondegenerate, linearly normal in $\mathbb{P}^5$, the Swinnerton-Dyer classification \cite{SW} tells us that $Y$ is the complete intersection of two quadrics in $\mathbb{P}^5$. In particular, $Y$ contains a line through any of its points. Furthermore, if   $Y$ is smooth, its family of lines is smooth and connected and $Y$ contains  4 lines through a general point $y\in Y$. Let $\Delta_1,\,\ldots,\,\Delta_4$ be  the four lines through $y$. Then the $\Delta_i$ are contained in the projectivized tangent space $\mathbb{P}^3_y$ of $Y$ at $y$, and   $\mathbb{P}^3_y\cap Y\supseteqq \cup_i\Delta_i$, while by smoothness of $Y$, $\mathbb{P}^3_y\cap Y$ has dimension $1$; hence we have in fact
$\mathbb{P}^3_y\cap Y= \cup_i\Delta_i$. The     inverse images $S_i:=\phi_{L+M}^{-1}(\Delta_i)$ are then cohomologous in $X$ and their common class $f$ satisfies
$$4f+e=(l+m)^2$$
for some pseudoeffective class in $X$. This contradicts again Lemma \ref{nolmsur2crucial}. We  now consider the case where  $Y$ is singular and try to extend the argument above. We still  know that $Y$ is swept-out by lines and that there exist at least 4 lines passing through a general point of $Y$. Unfortunately we do not know that the lines are homologous or algebraically equivalent in $Y$, so the above argument fails. However, we have the following
\begin{sublemm}\label{sublemmnomtardif}  If $Y={\rm Im}\,\phi_{L+M}$ is the intersection of two quadrics in $\mathbb{P}^5$, there are at most two algebraic equivalence classes of mobile lines in $Y$.
\end{sublemm}
\begin{proof}   We first claim that the family of conics in $Y$ has at most two irreducible 4-dimensional  components whose general point parameterizes a conic  passing  through the general point of $Y$. Indeed, $Y$ is not swept-out by planes, otherwise it has many reducible hyperplane sections, which is excluded. Hence we can consider only the family of conics in $Y$ which are not contained in a plane contained in $Y$. But these conics are in bijection with planes contained in one quadric $Q_t$ containing $Y$ and not contained in $Y$.  By Lemma \ref{leserie2AA}, $Y$ is not contained in any quadric of rank $\leq 4$. The family of planes in $Q_t$ thus  has two irreducible components of dimension $3$ if $Q_t$ is smooth and only one, also of dimension 3, if $Q_t$ is singular of rank $5$. Thus the family of planes contained in one of the  $Q_t$ has one or  two components, according to whether the double cover of the projective line parameterizing the quadrics $Q_t$ containing $Y$ determined by the choice of a rulling is reducible or not. This proves the claim. Let now $y\in Y$ be  a general point. There are (at least) $4$ lines $l_1,\ldots,\,l_4$ in $Y$ passing through $y$, and the union of any two of these lines is a conic in $Y$ passing through $y$. Hence the cycles $l_i+l_j$ belong to only two algebraic equivalence classes of $1$-cycles in $Y$, and it follows immediately that these four lines  belong to at most  two algebraic equivalence classes of 1-cycles in $Y$.
\end{proof}
\begin{coro}\label{coropour3foldcase} (i) There exists an algebraic equivalence class $\mathcal{C}$ of $1$-cycles on $Y$ such that, through  a general point $y\in Y$, there passes at least two lines of the class $\mathcal{C}$.

(ii)  There exist chains of three lines $\Delta_1,\,\Delta_2,\,\Delta_3\subset Y$, $\Delta_1\cap\Delta_2\not=\emptyset,\,\Delta_2\cap\Delta_3\not=\emptyset$, such that the $\Delta_i$ pass through the general point of $Y$ and the three lines $\Delta_i$ are in the class $\mathcal{C}$.
\end{coro}
\begin{proof}   Statement (i) immediately follows from Sublemma \ref{sublemmnomtardif}, since there passes four lines through a general point of $Y$.

(ii) Given a general   point $y$ of $Y$, there are two lines $\Delta_1,\,\Delta_2$ in the algebraic equivalence class $\mathcal{C}$ and passing through $y$. Choosing another point $y'\in\Delta_2$, we can choose a deformation $\Delta_3$ of $\Delta_1$ (hence also in the class $\mathcal{C}$) passing through $y'$. This gives the desired chain.
\end{proof}
 Corollary \ref{coropour3foldcase} leads to a contradiction  as follows: indeed the three lines $\Delta_i$ forming a connected chain  are all contained in a mobile  $\mathbb{P}^3_{\Delta_\cdot}$.  Assume first  that $\mathbb{P}^3_{\Delta_\cdot}\cap Y$ is $1$-dimensional; then
 we get, by taking inverse images in  $X$, an equality of codimension $2$ cycles in $X$
\begin{eqnarray}\label{eqtroisdroites} \phi^{*}_{L+M}(\mathbb{P}^3_{\Delta_\cdot}\cap Y)=T_{1}+T_{2}+T_{3}+ T\,\,{\rm in}\,\,A^2(X)
\end{eqnarray}
where $T$ is the class of an effective surface in $X$ and $T_i=\phi_{L+M}^{-1}(\Delta_i)$.
As the three lines $\Delta_i$ are algebraically equivalent in $Y$, the three surfaces $T_i$ are numerically equivalent in $X$, and thus (\ref{eqtroisdroites}) contradicts Lemma \ref{nolmsur2crucial}.
It remains to analyze the case where $\mathbb{P}^3_{\Delta_\cdot}\cap Y$ has a $2$-dimensional component for general $\Delta_\cdot$. If this component is mobile, then $Y$ has many reducible hyperplane sections, which is excluded. If this component is fixed, it must be a plane $P\subset Y$ since it is contained in the intersection of at least two $\mathbb{P}^3$'s, and this plane has   the property that any mobile  line in $Y$ in the algebraic equivalence class $\mathcal{C}$   intersects $P$. In that case, under  the linear projection $\pi_P:Y\dashrightarrow \mathbb{P}^2$ from $P$, $Y$ maps to a curve  of degree $>1$ in  $\mathbb{P}^2$, since the fiber of the projection $\pi_P$ passing through the general point $y$ contains at least two lines, hence must be of dimension at least $2$.  Hence $Y$ has many reducible hyperplane sections, which gives  again a contradiction.
Lemma \ref{leserie2Bbis} is thus proved, hence also Proposition \ref{proquatrecinqtrois} in the case of degree $4$.
\end{proof}

\begin{proof}[Proof of Proposition \ref{proquatrecinqtrois}] By Lemma \ref{leserie2Bbis}, we only have to exclude the case where $Y$ is a threefold of  degree $5$.

\begin{claim} \label{leserie2Bnewnew} $Y$ is not  contained in a quadric.
\end{claim}

\begin{proof} If  $Y$ is contained in a quadric $Q\subset \mathbb{P}^5$,   $Q$ must have  rank at least $5$  by Lemma \ref{leserie2AA}. The  general hyperplane section $Q_H:=Q\cap H$ of $Q$ is then a  smooth quadric of dimension $3$ which contains a surface of degree $5$, contradicting the fact that ${\rm Pic}\,Q_H$  is generated by $\mathcal{O}_Q(1)$.
\end{proof}
Denote by $n: Y_n\rightarrow Y$ the normalization of $Y$. Thus  $Y_n$ is smooth in codimension $1$. For a general $\mathbb{P}^3\subset \mathbb{P}^5$, the general section $C_n:=n^{-1}(C),\,\,C:=Y\cap\mathbb{P}^3$,  of $Y_n$   is a smooth connected  curve. We denote by $S$ a general hyperplane section of $Y$, $S_n\subset Y_n$ its inverse image in $Y_n$ and consider the inclusions  $C_n\subset S_n\subset Y_n$.
\begin{claim} \label{leserie2Bnewnewnew} The curve $C\subset \mathbb{P}^3$ is a smooth genus $2$ curve of degree $5$ (in particular it is isomorphic to $C_n$).  It  is thus contained in a quadric.
\end{claim}
\begin{proof} The second statement follows from the first since it implies that $h^0(C,\mathcal{O}_C(2))=9$.
Let  $\tau: \widetilde{Y}\rightarrow Y_n$ be a desingularization, and let $\tau':=n\circ \tau$.
We have
\begin{eqnarray}\label{eqnewdesections} H^0(\widetilde{Y}, {\tau'}^*\mathcal{O}_{{Y}}(1))\cong H^0(Y_n,n^*\mathcal{O}_{{Y}}(1))\cong  H^0(Y,\mathcal{O}_Y(1)).\end{eqnarray} Indeed, via the dominant  rational map
$${\tau'}^{-1}\circ \phi_{L+M}: X\dashrightarrow \widetilde{Y},$$
sections of ${\tau'}^*\mathcal{O}_{{Y}}(1)$ on $\widetilde{Y}$ pull-back to sections of $L+M$ on $X$, while by construction, the pull-back map  $\phi_{L+M}^*: H^0(Y,\mathcal{O}_Y(1))\rightarrow  H^0(X,L+M)$ is an isomorphism.
We have \begin{eqnarray}\label{eqvanishingnewdu23} H^1(\widetilde{Y},\mathcal{O}_{\widetilde{Y}})=0\end{eqnarray} since $\widetilde{Y}$ is rationally dominated by $X$. Let $\widetilde{S}:={\tau'}^{-1}(S)\subset \widetilde{Y}$, so that $\widetilde{S}\in |{\tau'}^*\mathcal{O}_{{Y}}(1)|$. By the vanishing (\ref{eqvanishingnewdu23}) and using (\ref{eqnewdesections}), we conclude that $h^0(\widetilde{S}, {\tau'}^*\mathcal{O}_{{S}}(1))=5$.
Furthermore, we also have $H^1(\widetilde{S},\mathcal{O}_{\widetilde{S}})=0$, using (\ref{eqvanishingnewdu23}) and the fact that
$\widetilde{S}\subset \widetilde{Y}$ is big and nef. As $C_n\subset \widetilde{S}$ belongs to $|{\tau'}^*\mathcal{O}_{S}(1)|$, we conclude as before that, denoting $\mathcal{O}_{{C_n}}(1):=  {\tau'}^*\mathcal{O}_{C}(1)$,
$$h^0(C_n, \mathcal{O}_{{C_n}}(1))=h^0(\widetilde{S}, {\tau'}^*\mathcal{O}_{{S}}(1))-1=4,$$
which implies that $h^1(C_n, \mathcal{O}_{{C_n}}(1))=0$, hence  $g(C_n)=2$,    since we know that the line bundle $\mathcal{O}_{{C_n}}(1)$ on $C_n$ has degree $5$. It follows that  $\mathcal{O}_{C_n}(1)$ is very ample on $C_n$, hence $C_n$ is isomorphic to $C$.
\end{proof}
We get a contradiction from Claims \ref{leserie2Bnewnew} and \ref{leserie2Bnewnewnew} by observing that both restriction maps
$$H^0(\mathbb{P}^5,\mathcal{I}_Y(2))\rightarrow H^0(\mathbb{P}^4,\mathcal{I}_S(2)),$$
$$H^0(\mathbb{P}^4,\mathcal{I}_S(2)) \rightarrow H^0(\mathbb{P}^3,\mathcal{I}_C(2))$$
are  surjective. Indeed, the surjectivity in both cases is implied by the respective vanishings
$H^1(\mathbb{P}^5,\mathcal{I}_Y(1))=0$ and $H^1(\mathbb{P}^4,\mathcal{I}_S(1))=0$, that come from the fact that  both $Y\subset \mathbb{P}^5$ and $S\subset \mathbb{P}^4$ are linearly normal (for the surface $S$, this follows indeed from the arguments given in the previous proof). This concludes the proof of Proposition \ref{proquatrecinqtrois}.
\end{proof}

    \end{document}